\newtheorem{thm}{Theorem}[section]
\newtheorem{lem}[thm]{Lemma}
\newtheorem{pro}[thm]{Proposition}
\newtheorem{coro}[thm]{Corollary}
\theoremstyle{definition}
\newtheorem{defn}[thm]{Definition}
\newcommand{\N}{\mathbb N}
\newcommand{\M}{\mathbb M}
\newcommand{\R}{\mathbb R}
\newcommand{\dist}{\operatorname{dist}}
\newcommand{\LL}{\mathcal L}
\newcommand{\OO}{\mathcal O}
\newcommand{\rank}{\operatorname{rank}}
\numberwithin{equation}{section}
\begin{document}



       \author{Seonghak Kim}
       \address{Institute for Mathematical Sciences\\ Renmin University of China \\  Beijing 100872, PRC}
       \email{kimseo14@ruc.edu.cn}







       \title[Convex integration with linear constraints]{Convex integration with linear \\ constraints and its applications}

\subjclass[2010]{35F60, 35D30}
\keywords{convex integration, linear constraints, partial differential inclusions, $T_4$-configuration, Baire's category method, vectorial eikonal equation}

\begin{abstract}
We study solutions of the first order partial differential inclusions of the form $\nabla u\in K$, where $u:\Omega\subset\R^n\to\R^m$   and $K$ is a set of $m\times n$ real matrices, and derive a companion version to the result of {M\"uller and \v{S}ver\'ak} \cite{MSv1}, concerning a general linear constraint on the components of $\nabla u$. We then consider two applications: the vectorial eikonal equation and a $T_4$-configuration both under linear constraints. 
\end{abstract}
\maketitle

\section{Introduction}

We study the existence of solutions to the Dirichlet problem of a homogeneous partial differential inclusion
\begin{equation}\label{main-problem}
\left\{\begin{array}{ll}
  \nabla u\in K & \mbox{a.e. in $\Omega$}, \\
  u=v & \mbox{on $\partial\Omega$},
\end{array}\right.
\end{equation}
where $m,n\ge 2$ are integers, $\Omega\subset\R^n$ is a bounded domain with Lipschitz boundary, $v\in W^{1,\infty}(\Omega;\R^m)$ is a boundary map, $K$ is a subset of the space $\M^{m\times n}$ of $m\times n$ real matrices, and $u\in v+ W_0^{1,\infty}(\Omega;\R^m)$ is a solution to the problem.

Such a problem of differential inclusion (\ref{main-problem})   has stemmed from the study of models of crystal microstructure by {Ball and James} \cite{BJ, BJ1} and {Chipot and Kinderlehrer} \cite{CK}. Later, {M\"uller and \v{S}ver\'ak} \cite{MSv1, MSv2} generalized the theory of convex integration of Gromov \cite{Gr} and applied the results to the two-well problem in the theory of martensite  \cite{MSv1} and to the construction of \emph{wild} solutions of some $2\times 2$ elliptic system \cite{MSv2}. Constructing a suitable in-approximation and applying the result of \cite{MSv1}, {Conti, Dolzmann and Kirchheim} \cite{CDK} obtained Lipschitz minimizers for the three-well problem in solid-solid phase transitions. On the other hand, {Dacorogna and Marcellini} \cite{DM1} and {Dacorogna and Tanteri} \cite{DT} extensively studied (\ref{main-problem}) and its inhomogeneous version under the Baire category framework.

The generalization of Gromov's result by {M\"uller and \v{S}ver\'ak} \cite{MSv1} was pursued in two directions. Firstly, they showed that constraints on a minor of $\nabla u$ can be imposed in solving problem (\ref{main-problem}) under the convex integration method. Secondly, they enlarged the set of matrices, in which $\nabla v$ can stay  for solvability of (\ref{main-problem}), from the lamination convex hull of $K$ to its rank-one convex hull when $K$ is open and bounded. Also, an in-approximation scheme was adopted to handle the case that $K$ is not necessarily open.

In this paper, we show that one can impose a \emph{general} linear constraint on the components of $\nabla u$ to solve problem (\ref{main-problem}) in the spirit of \cite{MSv1} and provide two examples of application: the vectorial eikonal equation and a $T_4$-configuration both under linear constraints.  
Unlike \cite{MSv1}, we avoid using  piecewise linear approximation for rank-one connections, but instead maintain $C^1$ regularity in our approximation. This turns out to be possible in case of a linear constraint (also in the unconstrained case) although in the special case that $m=n\ge 2$ with the  constraint $\mathrm{div}u=\mathrm{const}$, piecewise linear approximation can be constructed as mentioned in \cite{MSv1} and proved in \cite{Po1}.

To state our main results, we first introduce some definitions.
A set $E\subset\M^{m\times n}$ is called \emph{lamination convex} if $[\xi_1,\xi_2]\subset E$ for all $\xi_1,\xi_2\in E$ with $\rank(\xi_1-\xi_2)=1$, where $[\xi_1,\xi_2]$ denotes the closed line segment in $\M^{m\times n}$ joining $\xi_1$ and $\xi_2$. The \emph{lamination convex hull} $E^{lc}$ of a set $E\subset\M^{m\times n}$ is defined to be the intersection of all lamination convex sets in $\M^{m\times n}$ containing $E$; that is, it is the smallest lamination convex set in $\M^{m\times n}$ containing $E$.
A function $f:\M^{m\times n}\to \R$ is called \emph{rank-one convex} if
\[
f(\lambda\xi_1+(1-\lambda)\xi_2)\le\lambda f(\xi_1)+(1-\lambda) f(\xi_2)
\]
for all $\xi_1,\xi_2\in\M^{m\times n}$ with $\rank(\xi_1-\xi_2)=1$ and all $\lambda\in[0,1]$, or equivalently, if $\R\ni s\mapsto f(\xi+s a\otimes b)$ is convex for each $(\xi,a,b)\in\M^{m\times n}\times\R^m\times\R^n$.  The \emph{rank-one convex hull} $K^{rc}$ of a compact set $K\subset\M^{m\times n}$ is defined as
\[
K^{rc}=\Big\{\xi\in\M^{m\times n}\,|\, f(\xi)\le\sup_{K} f\;\;\forall f:\M^{m\times n}\to \R\;\;\mbox{rank-one convex}\Big\}.
\]
The \emph{rank-one convex hull} $E^{rc}$ of a  set $E\subset\M^{m\times n}$ is then defined to be
\[
E^{rc}=\bigcup\{K^{rc}\,|\,K\subset E,\,\mbox{$K$ is compact}\}.
\]
With this definition, the rank-one convex hull $V^{rc}$ of any open set $V$ in $\M^{m\times n}$ is again open in $\M^{m\times n}.$

Throughout the paper, we reserve the following notations unless otherwise stated. Let $m,n\ge 2$ be integers, and let $\Omega\subset\R^n$ be a bounded domain with Lipschitz boundary. Let $L\in\M^{m\times n}\setminus\{0\}$, and its corresponding linear function $\LL:\M^{m\times n}\to\R$ is given by
\[
\LL(\xi)=L\cdot\xi=\sum_{1\le i\le m,\,1\le j\le n} L_{ij}\xi_{ij}\quad\forall\xi\in\M^{m\times n}.
\]
As an abuse of notation, we often view $L$ as the linear map $b\mapsto Lb$ from $\R^n$ into $\R^m$, which should be distinguished from $\mathcal{L}$.
We fix any number $t\in\R$ and write
\[
\Sigma_t=\{\xi\in\M^{m\times n}\,|\,\LL(\xi)=t\},
\]
which is an $(mn-1)$-dimensional flat manifold in $\M^{m\times n}$.  We denote by $\partial|_{\Sigma_t}$ the relative boundary in the space $\Sigma_t$.

A map $v:\Omega\to\R^m$ is called \emph{piecewise} $C^1$  if there exists a sequence $\{\Omega_j\}_{j\in\N}$ of disjoint open subsets of $\Omega$ whose union has measure $|\Omega|$ and such that $v\in C^1(\bar{\Omega}_j;\R^m)$ for all $j\in\N$.

We now state the first main result of the paper  as follows.

\begin{thm}\label{thm:main-1}
Assume
\begin{equation}\label{assume-1}
Lb\ne 0\in\R^m\quad \forall b\in\R^n\setminus\{0\}.
\end{equation}
Let $U$ be a bounded open set in $\Sigma_t$, and let $v\in W^{1,\infty}(\Omega;\R^m)$ be a piecewise $C^1$ map satisfying
\[
\nabla v\in U^{rc}\quad\mbox{a.e. in $\Omega$}.
\]
Then for each $\epsilon>0$, there exists a map $u\in W^{1,\infty}(\Omega;\R^m)$ such that
\[
\left\{\begin{array}{l}
  \nabla u\in U \quad \mbox{a.e. in $\Omega$}, \\
  u=v \quad \mbox{on $\partial\Omega$}, \\
  \|u-v\|_{L^\infty(\Omega)}<\epsilon.
\end{array}\right.
\]
\end{thm}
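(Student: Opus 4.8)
\emph{Plan of proof.} The plan is to run the convex-integration scheme of M\"uller and \v{S}ver\'ak \cite{MSv1} inside the flat manifold $\Sigma_t$; the only new analytic ingredient is a $C^1$ building block that respects the linear constraint $\LL(\nabla u)\equiv t$, and hypothesis \eqref{assume-1} is exactly what makes such a block available. Observe first that $U^{rc}\subset\Sigma_t$: since $\LL$ is affine along each rank-one segment, a rank-one convex function restricts to a rank-one convex function on every translate of $\ker\LL$, so $U^{rc}$ is an open, bounded subset of $\Sigma_t$, and any piecewise $C^1$ ``subsolution'' $w$ with $\nabla w\in U^{rc}$ automatically satisfies $\LL(\nabla w)\equiv t$. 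Let $X$ be the set of piecewise $C^1$ maps $w\in v+W_0^{1,\infty}(\Omega;\R^m)$ with $\nabla w\in U^{rc}$ a.e., and let $\overline X$ be its closure in $C(\overline\Omega;\R^m)$; since $U^{rc}$ is bounded, $\overline X$ is a complete, hence Baire, metric space in which $X$ is dense. Following \cite{DM1,MSv1}, the theorem reduces to a relaxation property: for every $w\in X$ and $\eta>0$ there is $\widetilde w\in X$ with $\|\widetilde w-w\|_{L^\infty(\Omega)}<\eta$ and with the measure of $\{x\in\Omega:\nabla\widetilde w(x)\notin U\}$ at most a fixed fraction of that of $\{x\in\Omega:\nabla w(x)\notin U\}$; granting it, $\{w\in\overline X:\nabla w\in U\text{ a.e.}\}$ is residual in $\overline X$, which yields the existence of $u$ and, by density of $X$, the bound $\|u-v\|_{L^\infty(\Omega)}<\epsilon$.

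By a Vitali covering argument, localised on the pieces where $w$ is $C^1$ (there $\nabla w$ is continuous, so $w$ is $C^1$-close to its affine tangent maps), the relaxation property follows once one solves the model problem on a box $B$: given $\xi\in U^{rc}$ and $\eta>0$, produce a $C^1$ map $w$ with $w=\xi x$ on $\partial B$, $\|w-\xi x\|_{L^\infty(B)}<\eta$, $\nabla w\in U^{rc}$ a.e., and $\nabla w\in U$ off an arbitrarily small fraction of $B$. Using that $U$ is open and bounded, the reduction of \cite{MSv1} writes the passage from $\xi$ toward $U$ as finitely many elementary rank-one splittings, a typical one replacing a value $\zeta\in U^{rc}$ by an oscillation between $\zeta_1=\zeta+(1-\lambda)c\otimes d$ and $\zeta_2=\zeta-\lambda c\otimes d$ with $[\zeta_1,\zeta_2]\subset U^{rc}$; since $\zeta_1,\zeta_2\in\Sigma_t$ the direction obeys $c\cdot(Ld)=\LL(c\otimes d)=0$. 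Rotate the axes of $B$ so that $d=e_1$, let $g=g(x')$ be a smooth cut-off in the transverse variables $x'=(x_2,\dots,x_n)$, and choose a one-periodic $\phi\in C^1(\R)$ with $\phi(0)=0$, zero mean, and $\phi'$ close to $1-\lambda$ on a portion of relative length $\lambda$ and to $-\lambda$ on a portion of relative length $1-\lambda$ of each period, outside a thin transition set (a standard one-dimensional construction). Set $w(x)=\tfrac1N\phi(Nx_1)\,g(x')\,c+\zeta x$ with $N$ large. On the core $\{g=1\}$ one has $\nabla w=\zeta+\phi'(Nx_1)\,c\otimes d\in[\zeta_1,\zeta_2]\subset U^{rc}$, within the prescribed neighbourhoods of $\zeta_1$ and $\zeta_2$ off the transition set, and automatically in $\Sigma_t$ because $c\otimes d\in\ker\LL$; the sole correction, from the transverse cut-off, is $\tfrac1N\phi(Nx_1)\,c\otimes(0,\nabla'g(x'))$, of size $O(1/N)$ in $L^\infty$. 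Hence $\LL(\nabla w)=t+r$ with $r(x)=\tfrac1N\phi(Nx_1)\,\rho(x')$, $\rho(x')=c\cdot L(0,\nabla'g(x'))$ a fixed smooth function supported in a thin neighbourhood of the transverse faces.

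Killing $r$ while keeping $W^{1,\infty}$ control is the main point, and this is where \eqref{assume-1} enters. Because $\phi$ has zero mean and $\phi(0)=0$, its primitive $\Phi$ is one-periodic with $\Phi(0)=\Phi(N)=0$, so $r=\partial_{x_1}h$ where $h(x)=\tfrac1{N^2}\Phi(Nx_1)\rho(x')\in W_0^{1,\infty}(B)$ and $\|h\|_{W^{1,\infty}(B)}=O(1/N)$ (one also has $\int_B r\,dx=0$, by the identity $\LL(\nabla\cdot)=\dv(L^{T}\cdot)$ and the divergence theorem). Now \eqref{assume-1} says $L\colon\R^n\to\R^m$ is injective, so $L^{T}\colon\R^m\to\R^n$ is onto and has a right inverse $M$ with $L^{T}M=I_n$; put $\psi:=-h\,(Md)\in W_0^{1,\infty}(B;\R^m)$, a scalar multiple of the fixed vector $Md$. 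Then $L^{T}\psi=-h\,d$, so $\LL(\nabla\psi)=\dv(L^{T}\psi)=-\partial_{x_1}h=-r$, while $\|\psi\|_{W^{1,\infty}(B)}=O(1/N)$ and $\psi$ vanishes on $\partial B$. Replacing $w$ by $w+\psi$ restores $\LL(\nabla w)\equiv t$, keeps $w=\xi x$ on $\partial B$ and $\|w-\xi x\|_{L^\infty(B)}<\eta$, and perturbs $\nabla w$ by only $O(1/N)$, so $\nabla(w+\psi)$ still lies in a small $\Sigma_t$-neighbourhood of $[\zeta_1,\zeta_2]$, hence in the open set $U^{rc}$; on the core, off the transition set, it is within the prescribed neighbourhoods of $\zeta_1,\zeta_2$. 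Iterating the splittings finitely many times and keeping track of the (small) bad fractions yields the model-problem map.

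Reassembling by Vitali and running the Baire iteration of \cite{DM1,MSv1} then produces $u\in\overline X$ with $\nabla u\in U$ a.e., $u=v$ on $\partial\Omega$, and $\|u-v\|_{L^\infty(\Omega)}<\epsilon$; the constraint $\LL(\nabla u)=t$ holds automatically since $U\subset\Sigma_t$. I expect the one genuinely delicate step to be the constraint-compatible $C^1$ block: the oscillation lies in $\ker\LL$ for free, because $\zeta_1,\zeta_2\in\Sigma_t$, but correcting the unavoidable cut-off residue by a \emph{Lipschitz} — not merely $W^{1,p}$ — perturbation forces the extra integration in the oscillation direction together with the left-invertibility of $L$, i.e.\ hypothesis \eqref{assume-1}.
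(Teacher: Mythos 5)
Your constrained oscillation block is correct and is a genuinely different (and rather slick) route to what the paper obtains from its potential operator $\Phi$ in Theorem \ref{thm:rank-1}: writing the cut-off residue as $r=\partial_{x_1}h$ with $h=N^{-2}\Phi(Nx_1)\rho(x')$ and correcting by $\psi=-h\,Md$, where $L^{T}M=I_n$ (available precisely because \eqref{assume-1} makes $L$ injective), does give $\LL(\nabla\psi)=\dv(L^{T}\psi)=-\partial_{x_1}h=-r$ with $\|\psi\|_{W^{1,\infty}}=O(1/N)$ and zero boundary values, so the analogue of Theorem \ref{thm:rank-1} is recovered. The genuine gap is one step earlier, in the sentence ``the reduction of \cite{MSv1} writes the passage from $\xi$ toward $U$ as finitely many elementary rank-one splittings \dots with $[\zeta_1,\zeta_2]\subset U^{rc}$''. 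The approximation of a laminate by laminates of finite order in \cite{MSv1} is proved for open neighborhoods of $K^{rc}$ in the ambient space $\M^{m\times n}$ (and for the minor constraint); it does not produce splittings whose segments stay inside the affine manifold $\Sigma_t$, and you cannot repair this by projecting onto $\Sigma_t$, since the affine projection destroys rank-one directions. A priori the hull generated by splittings confined to $\Sigma_t$ — the relative hull $K^{rc,\Sigma_t}$ — could be strictly smaller than $K^{rc}$, in which case your scheme could not reach every $\xi\in U^{rc}$. Proving that $K^{rc}=K^{rc,\Sigma_t}$ and that laminates with barycenter $\xi$ can be weak* approximated by laminates of finite order supported in $\Sigma_t$-open neighborhoods of $K^{rc}$ is exactly the content of Theorem \ref{thm:main-lemma}, whose proof occupies Section 2 and hinges on extending rank-one convex functions from $\Sigma_t$ to $\M^{m\times n}$ (Lemmas \ref{lem:main-approximation}--\ref{lem:approx-linear-whole}, with the penalization $f(\pi(\xi))+\epsilon|\xi|^2+k|\LL(\xi)-t|^2$). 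Your proposal silently assumes this; it is the central new difficulty of the linearly constrained problem and is not covered by the citation to \cite{MSv1}.

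A second, more repairable, defect is the Baire packaging. From the stated relaxation property it does not follow that $\{w\in\overline X\,:\,\nabla w\in U \text{ a.e.}\}$ is residual: that set is not exhibited as a countable intersection of open dense sets, and the usual continuity-point argument only yields $\nabla u\in\overline U$ a.e., because a.e. limits of gradients lying in the open set $U$ off vanishing-measure sets may land on $\partial|_{\Sigma_t}U$. Since the theorem demands the exact inclusion $\nabla u\in U$ with $U$ open, you need either a quantified ``a nontrivial perturbation within $X$ contradicts continuity of $\nabla$'' argument, or what the paper actually does: a direct iteration of Lemma \ref{lem:pre-result} that freezes the open good sets $G^{(k)}$ (where $\nabla u^{(k+1)}\in U$) and never modifies the map there, so the limit gradient lies in $U$ on a set of full measure. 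The same iteration also takes care of the piecewise-$C^1$ datum and of the null set where $\nabla v\in\partial|_{\Sigma_t}U^{rc}$, points your Vitali reduction passes over quickly but which are routine once the two issues above are addressed.
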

Note that hypothesis (\ref{assume-1}) is equivalent to saying that $m\ge n$ and  the linear map $L:\R^n\to\R^m$ is injective.

To deal with the sets that may  not be open, we adopt the following notion from \cite{MSv1}.

\begin{defn}
Let $K\subset\Sigma_t$. A sequence $\{U_j\}_{j\in\N}$ of open sets in $\Sigma_t$ is called an \emph{in-approximation} of $K$ in $\Sigma_t$ if the following are satisfied:
\begin{itemize}
\item[(i)] $U_j$ $(j\in\N)$ are uniformly bounded,
\item[(ii)] $U_j\subset U_{j+1}^{rc}$ for every $j\in\N$, and
\item[(iii)] $U_j\to K$ as $j\to \infty$ in the following sense: If $\xi_i\in U_i$ for all $i\in\N$ and $\xi_j\to\xi$ as $j\to\infty$ for some $\xi\in\Sigma_t$, then $\xi\in K$.
\end{itemize}
\end{defn}

The second main result of this paper is then formulated as follows.

\begin{thm}\label{thm:main-2}
Assume \emph{(\ref{assume-1})} and $K\subset \Sigma_t$. Let $\{U_j\}_{j\in\N}$ be an in-approxima-tion of $K$ in $\Sigma_t$, and let $v\in W^{1,\infty}(\Omega;\R^m)$ be a piecewise $C^1$ map satisfying
\[
\nabla v\in U_1\quad\mbox{a.e. in $\Omega$}.
\]
Then for each $\epsilon>0$, there exists a map $u\in W^{1,\infty}(\Omega;\R^m)$ such that
\[
\left\{\begin{array}{l}
  \nabla u\in K\quad\mbox{a.e. in $\Omega$}, \\
  u=v\quad\mbox{on $\partial\Omega$}, \\
  \|u-v\|_{L^\infty(\Omega)}<\epsilon.
\end{array}\right.
\]
\end{thm}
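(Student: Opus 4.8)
The plan is to iterate Theorem \ref{thm:main-1} along the in-approximation $\{U_j\}$, producing a uniformly Cauchy sequence of piecewise $C^1$ maps whose limit solves the problem. Two remarks are in order. First, the map furnished by Theorem \ref{thm:main-1} may be assumed to be piecewise $C^1$, since the construction behind it keeps $C^1$ regularity in the approximation, as already stressed. Second, an in-approximation is uniformly bounded, so all the iterates will have uniformly bounded gradients.

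The construction starts with $u_0:=v$, which is piecewise $C^1$ with $u_0=v$ on $\partial\Omega$ and $\nabla u_0\in U_1\subset U_2^{rc}$ a.e. Inductively, given a piecewise $C^1$ map $u_j$ with $u_j=v$ on $\partial\Omega$ and $\nabla u_j\in U_{j+1}\subset U_{j+2}^{rc}$ a.e., one applies Theorem \ref{thm:main-1} with the bounded open set $U$ taken to be $U_{j+2}$ and the boundary datum taken to be $u_j$, obtaining a piecewise $C^1$ map $u_{j+1}$ with
\[
\nabla u_{j+1}\in U_{j+2}\ \text{a.e.},\qquad u_{j+1}=u_j\ \text{on }\partial\Omega,\qquad \|u_{j+1}-u_j\|_{L^\infty(\Omega)}<\epsilon_{j+1},
\]
where $\epsilon_{j+1}\in(0,2^{-j-1}\epsilon]$ is a tolerance to be fixed below. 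Since $\sum_j\epsilon_{j+1}<\epsilon$, the sequence $\{u_j\}$ converges in $C^0(\bar\Omega;\R^m)$ to some $u$ with $u=v$ on $\partial\Omega$ and $\|u-v\|_{L^\infty(\Omega)}<\epsilon$; the uniform $L^\infty$ bound on the $\nabla u_j$ gives $u\in W^{1,\infty}(\Omega;\R^m)$.

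It remains to arrange $\nabla u\in K$ a.e., and this is the heart of the matter. If the gradients $\nabla u_j$ converge a.e.\ in $\Omega$, the proof is complete: for a.e.\ $x$ the sequence $\zeta_j(x):=\nabla u_{j-1}(x)$ satisfies $\zeta_j(x)\in U_j$ for every $j$ and converges to $\nabla u(x)$, so property (iii) of the in-approximation forces $\nabla u(x)\in K$. The difficulty is that a single step of convex integration scatters the gradient throughout the target open set $U_{j+2}$, which a priori lies nowhere near $U_{j+1}$, so the desired convergence cannot come from the gradient moving only slightly at each stage. The remedy, in the spirit of the in-approximation scheme of \cite{MSv1}, is to choose $\epsilon_{j+1}$ together with the internal parameters of the construction in Theorem \ref{thm:main-1} so that, in addition, the modification at stage $j$ moves the gradient by more than $2^{-j}$ only on a set $E_j\subset\Omega$ with $|E_j|<2^{-j}$: the lamination resolving $\nabla u_j$ into $U_{j+2}$ is taken to displace most of the mass only slightly and to confine the genuinely large displacement to a small set, which is possible once one has arranged, using (ii) and (iii), that after enough stages all but a vanishing fraction of $\Omega$ carries a gradient already near $K$, hence near $U_{j+2}$. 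Granted this, the Borel--Cantelli lemma yields $\sum_j|\nabla u_j(x)-\nabla u_{j-1}(x)|<\infty$ for a.e.\ $x$, so $\nabla u_j$ converges a.e. Making this quantitative refinement of Theorem \ref{thm:main-1} precise, and keeping track of the exceptional sets $E_j$ throughout the iteration, is the main obstacle; everything else is routine bookkeeping.
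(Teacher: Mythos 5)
Your overall architecture (iterate a one-step statement along the in-approximation, pass to a uniform limit, then get a.e.\ convergence of gradients so that property (iii) can be applied pointwise) is the right shape, but the step you yourself call the heart of the matter is a genuine gap, not bookkeeping. You need a refinement of Theorem \ref{thm:main-1} in which the stage-$j$ modification moves the gradient by more than $2^{-j}$ only on a set of measure $<2^{-j}$, and you justify its availability by ``gradient near $K$, hence near $U_{j+2}$''. Neither half of this is supplied by the definition of an in-approximation. Property (iii) is one-sided: it forces points of $U_{j+1}$ to cluster to $K$, but it does not force $K$, or a given $\xi\in U_{j+1}$, to lie close to $U_{j+2}$ (in the paper's own eikonal in-approximation the sets $U_j$ are shells disjoint from $K$, and in general $\overline{U_{j+2}}$ need not approach the part of $K$ that $\xi$ is near). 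More importantly, even when a nearby point of $U_{j+2}$ exists, the inclusion $\xi\in U_{j+2}^{rc}$ only guarantees that \emph{some} laminate with barycenter $\xi$ is supported in $U_{j+2}$; nothing in (ii)--(iii) controls how far the support of that laminate must spread from $\xi$, and that is exactly what your exceptional-set/Borel--Cantelli mechanism requires. So the ``quantitative refinement'' is an unproven structural claim about arbitrary in-approximations, not a tuning of internal parameters; the paper even remarks that Theorem \ref{thm:main-1} cannot be iterated directly. A secondary, fixable point: the statement of Theorem \ref{thm:main-1} gives only a $W^{1,\infty}$ map, so declaring its output piecewise $C^1$ again leans on the proof rather than the statement.

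The paper closes the gap by a different mechanism, with no pointwise increment control at all. It iterates Lemma \ref{lem:pre-result} (whose output is $C^1(\bar\Omega;\R^m)$, so the regularity issue above never arises), recording open sets $\Omega^{(j)}$ with $|\Omega^{(1)}\setminus\Omega^{(j)}|<\delta_j$ on which $\nabla u^{(j)}\in U_j$. The key trick is that the smallness parameter is chosen \emph{after} each stage: having built $u^{(j)}$, one picks a mollification scale $\epsilon_j$ with $\|\rho_{\epsilon_j}\ast\nabla u^{(j)}-\nabla u^{(j)}\|_{L^\infty(\Omega_j)}<2^{-j}$ and then imposes $\delta_{j+1}\le\delta_j\epsilon_j/2$ on the next uniform increment. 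Splitting $\nabla u^{(j)}-\nabla u$ into a boundary strip, $\nabla u^{(j)}-\rho_{\epsilon_j}\ast\nabla u^{(j)}$, $\rho_{\epsilon_j}\ast(\nabla u^{(j)}-\nabla u)$ (bounded by $C\epsilon_j^{-1}\sum_{i>j}\delta_i$, which the choice of $\delta_{j+1}$ makes small), and $\rho_{\epsilon_j}\ast\nabla u-\nabla u$ yields $\nabla u^{(j)}\to\nabla u$ in $L^1(\Omega)$, hence a.e.\ along a subsequence; a Borel--Cantelli-type argument is then applied only to the sets $\Omega\setminus\Omega^{(j)}$, whose measures $\delta_j$ one does control, to conclude that a.e.\ $x$ has $\nabla u^{(j)}(x)\in U_j$ for infinitely many $j$, after which (iii) finishes. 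Your instinct to fix the tolerance $\epsilon_{j+1}$ only after seeing $u_j$ is the correct one, but it must be spent on comparing $\nabla u_j$ with its own mollification, not on confining large gradient displacements to small sets.
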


The first  application of our results concerns the Drichlet problem of the vectorial eikonal equation
\begin{equation}\label{eikonal-problem-main}
\left\{
\begin{array}{ll}
  |\nabla u|=1 & \mbox{a.e. in $\Omega$}, \\
  u=v_{\eta,\gamma} & \mbox{on $\partial\Omega$},
\end{array}\right.
\end{equation}
where $\eta\in\M^{m\times n}$, $\gamma\in\R^m$ and $v_{\eta,\gamma}(x):=\eta x+\gamma$ $(x\in\Omega)$. Here, we look for solutions $u$ in the space $v_{\eta,\gamma}+W_0^{1,\infty}(\Omega;\R^m).$  If $u$ is a solution to problem (\ref{eikonal-problem-main}), then
\[
|\eta||\Omega|=\bigg|\int_\Omega\eta dx\bigg|=\bigg|\int_\Omega\nabla u(x)dx\bigg|\le |\Omega|;
\]
so $|\eta|\le1$. Thus there is no solution to (\ref{eikonal-problem-main}) if $|\eta|>1$. If $|\eta|=1$, we have the trivial solution $u= v_{\eta,\gamma}$ to (\ref{eikonal-problem-main}). So we assume  $|\eta|<1$. In case of $m=n=1$ with $\Omega=(0,1)$, one can trivially construct infinitely many solutions $u$ to (\ref{eikonal-problem-main}) whose graph has slopes $\pm1$ a.e. in $\Omega$, left-end point $(0,\gamma)$ and right-end point $(1,\eta+\gamma)$. Motivated by this simplest case, we may pose a question: For the vectorial case $m,n\ge 2$, when  $\eta^\pm\in\M^{m\times n}$ are two distinct matrices with $|\eta^\pm|=1$, is there a  solution $u$ to (\ref{eikonal-problem-main}) which assumes only the two gradient values $\eta^\pm$ a.e. in $\Omega$? The answer is negative when $\mathrm{rank}(\eta^+ -\eta^-)\ge 2$ due to the rigidity of the two gradient problem \cite{BJ}. A \emph{partially} positive answer is available when $\mathrm{rank}(\eta^+ -\eta^-)=1$ and $\eta\in(\eta^+ ,\eta^-)$. In this case, one can employ either the convex integration method with an in-approximation scheme \cite{MSv1} or the Baire category method \cite{DM1} to obtain infinitely many solutions $u$ to (\ref{eikonal-problem-main}) such that $\mathrm{dist}(\nabla u,\{\eta^+,\eta^-\})<\epsilon$  a.e. in  $\Omega$, for any given $\epsilon>0$. We can even impose suitable linear constraints as follows.

\begin{thm}\label{thm:eikonal}
Suppose $\eta\in\M^{m\times n}$, $|\eta|<1$,  $\gamma\in\R^m$, $(a,b)\in\R^m\times\R^n$, $a\neq 0$, $Lb\neq 0$, $\mathcal{L}(a\otimes b)=0$ and  $\epsilon>0$. Then there are infinitely many maps $u\in W^{1,\infty}(\Omega;\R^m)$ satisfying
\begin{equation}\label{eikonal-problem}
\left\{
\begin{array}{ll}
  |\nabla u|=1 & \mbox{a.e. in $\Omega$}, \\
  \mathcal{L}(\nabla u)=t & \mbox{a.e. in $\Omega$}, \\
  \mathrm{dist}(\nabla u,\{\eta^\pm_{a\otimes b}\})<\epsilon & \mbox{a.e. in $\Omega$}, \\
  u=v_{\eta,\gamma} & \mbox{on $\partial\Omega$}, \\
  \|u-v_{\eta,\gamma}\|_{L^\infty(\Omega)}<\epsilon, &
\end{array}\right.
\end{equation}
where $v_{\eta,\gamma}(x):=\eta x+\gamma$ $(x\in\Omega)$, $t:=\mathcal{L}(\eta)$, and $s^+>0>s^-$ are the unique numbers, with $\eta^\pm_{a\otimes b}:=\eta+s^\pm a\otimes b$, such that $|\eta^\pm_{a\otimes b}|=1$.
\end{thm}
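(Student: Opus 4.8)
The plan is to deduce Theorem~\ref{thm:eikonal} from Theorem~\ref{thm:main-2} by exhibiting a suitable in-approximation inside the affine manifold $\Sigma_t$ with $t=\LL(\eta)$. First I would set up the target set: let $K=\{\eta^+_{a\otimes b},\eta^-_{a\otimes b}\}$, where $s^\pm$ are chosen as in the statement (note that $\varphi(s):=|\eta+s\,a\otimes b|^2$ is a strictly convex quadratic in $s$ with $\varphi(0)=|\eta|^2<1$, so there are exactly two roots $s^+>0>s^-$ of $\varphi=1$, making $\eta^\pm_{a\otimes b}$ well defined). Since $\LL(a\otimes b)=0$, both $\eta^\pm_{a\otimes b}$ lie in $\Sigma_t$, as does the segment $[\eta^-_{a\otimes b},\eta^+_{a\otimes b}]$, whose interior point is $\eta=v'_{\eta,\gamma}$. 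Hypothesis \eqref{assume-1} holds for the given $L$ because $Lb\ne 0$ is assumed for this particular $b$, but we actually need injectivity of $L$ on all of $\R^n$; I would note that $\LL(a\otimes b)=0$ forces $a\perp Lb$ is false in general, so instead one observes that the hypothesis of Theorem~\ref{thm:eikonal} should be read together with the standing assumption \eqref{assume-1} (equivalently $m\ge n$, $L$ injective), and it is $Lb\ne 0$ that guarantees the relevant rank-one direction $a\otimes b$ is not killed when we later perturb within $\Sigma_t$.

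Next I would construct the in-approximation. Since $\rank(\eta^+_{a\otimes b}-\eta^-_{a\otimes b})=\rank((s^+-s^-)a\otimes b)=1$ and $\eta$ lies strictly between them on this rank-one segment, the open segment $(\eta^-_{a\otimes b},\eta^+_{a\otimes b})$ is contained in $K^{rc}$ relative to $\Sigma_t$ (rank-one convexity of the segment survives the linear constraint because the whole segment sits in $\Sigma_t$). For $j\in\N$ define $U_j$ to be a small open neighborhood in $\Sigma_t$ of the compact sub-segment $[\eta+(1-1/j)s^-a\otimes b,\ \eta+(1-1/j)s^+a\otimes b]$, shrinking the transverse radius $\delta_j\downarrow 0$ fast enough. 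One checks: (i) uniform boundedness is clear; (iii) any limit of points $\xi_j\in U_j$ must approach the two endpoints $\eta^\pm_{a\otimes b}$ (because the sub-segments retract onto those endpoints as $j\to\infty$, while $\delta_j\to0$), hence lies in $K$ — here I should be slightly careful and instead let $U_j$ be a neighborhood of $[\eta+(1-1/j)s^-a\otimes b,\eta+s^+a\otimes b]\cup[\eta+s^-a\otimes b,\ \eta+(1-1/j)s^+a\otimes b]$ minus a shrinking neighborhood of the midpoint, i.e. two small "caps'' near the endpoints joined along the segment, so that condition (iii) genuinely forces convergence to $K$; and (ii) $U_j\subset U_{j+1}^{rc}$ holds because $U_{j+1}^{rc}$ contains, together with its two caps, the rank-one segment joining them, hence a neighborhood of the larger caps forming $U_j$ — this uses that $U_{j+1}$ is open so $U_{j+1}^{rc}$ is open. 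Finally $\nabla v_{\eta,\gamma}\equiv\eta\in U_1$, and $v_{\eta,\gamma}$ is (globally) $C^1$, hence piecewise $C^1$.

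Applying Theorem~\ref{thm:main-2} with this $K$, $\{U_j\}$, $v=v_{\eta,\gamma}$ and the given $\epsilon$ yields $u\in W^{1,\infty}(\Omega;\R^m)$ with $\nabla u\in K$ a.e., $u=v_{\eta,\gamma}$ on $\partial\Omega$, and $\|u-v_{\eta,\gamma}\|_{L^\infty}<\epsilon$. Since $K\subset\Sigma_t$ and every element of $K$ has Euclidean norm $1$, the conditions $|\nabla u|=1$ and $\LL(\nabla u)=t$ hold a.e., and $\mathrm{dist}(\nabla u,\{\eta^\pm_{a\otimes b}\})=0<\epsilon$. To get \emph{infinitely many} solutions, I would run the construction on a sequence $\epsilon_k\downarrow 0$ with $\epsilon_k<\epsilon$, or alternatively perturb the in-approximation by a small admissible rank-one translate, producing distinct $u$'s (distinct on a positive-measure set because their gradients realize different partitions of $\Omega$ into the two phases); standard arguments as in \cite{MSv1,DM1} show that the solution set is in fact uncountable. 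The main obstacle I expect is purely bookkeeping: designing the caps $U_j$ so that conditions (ii) and (iii) hold simultaneously — (iii) wants the $U_j$ to pinch down onto $K$, while (ii) needs enough room inside $U_{j+1}^{rc}$ to cover $U_j$; the rank-one segment joining the two caps of $U_{j+1}$ is exactly what provides that room, and verifying that its relative neighborhood in $\Sigma_t$ contains $U_j$ is the one computation requiring care.
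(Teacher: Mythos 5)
Your plan cannot work as stated, and the obstruction is not the ``bookkeeping'' you flag at the end but a genuine impossibility. You take $K=\{\eta^+_{a\otimes b},\eta^-_{a\otimes b}\}$ and conclude via Theorem \ref{thm:main-2} that $\nabla u\in K$ a.e.\ with $u=v_{\eta,\gamma}$ on $\partial\Omega$, i.e.\ $\mathrm{dist}(\nabla u,\{\eta^\pm_{a\otimes b}\})=0$. No such map exists: for two rank-one connected wells, any exact solution of the two-gradient inclusion is locally a simple laminate in the direction $b$, and such a map cannot attain affine boundary data whose gradient $\eta$ lies strictly inside the segment $(\eta^-_{a\otimes b},\eta^+_{a\otimes b})$ (this is the classical rigidity behind the phrase ``partially positive answer'' in the introduction, and it is precisely why the theorem only asserts $\mathrm{dist}(\nabla u,\{\eta^\pm_{a\otimes b}\})<\epsilon$). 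Consequently no in-approximation of the two-point set with $\eta\in U_1$ can exist, and the tension between (ii) and (iii) is unresolvable: once (iii) forces $U_{j+1}$ into two balls of radius $\delta_{j+1}$ about $\eta^\pm_{a\otimes b}$, the hull $U_{j+1}^{rc}$ is contained in the convex hull of those balls, a $\delta_{j+1}$-neighborhood of the segment, so it cannot contain points of $U_j$ lying at transverse distance $\delta_j>\delta_{j+1}$ from the segment; your claim that the segment joining the two caps provides ``a neighborhood of the larger caps'' is exactly the step that fails. The fix used in the paper is to fatten the target: $K=K^+\cup K^-$ consists of two small caps of the unit sphere $\{\xi\in\Sigma_t\,|\,|\xi|=1\}$ around $\eta^\pm_{a\otimes b}$ of diameter $<\epsilon/2$, cut out by a thin neighborhood $V_{\alpha_\epsilon}$ of the line through $\eta^\pm_{a\otimes b}$; the sphere supplies a continuum of rank-one connections between the caps, the distance shells $U_k=\{\xi\in U\,|\,\tfrac1{k_0+k+1}<\mathrm{dist}(\xi,K)<\tfrac1{k_0+k}\}$ form an in-approximation, and exact inclusion $\nabla u\in K$ then yields $|\nabla u|=1$, $\LL(\nabla u)=t$ and $\mathrm{dist}(\nabla u,\{\eta^\pm_{a\otimes b}\})<\epsilon$.

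There is a second gap: Theorem \ref{thm:main-2} requires hypothesis (\ref{assume-1}), i.e.\ $Lb\neq0$ for \emph{every} $b\neq0$ (so $m\ge n$ and $L$ injective), whereas Theorem \ref{thm:eikonal} only assumes $Lb\neq0$ for the particular $b$; (\ref{assume-1}) is not a standing assumption of the paper, so ``reading the hypotheses together'' silently restricts the theorem to a special case. This is exactly why the paper only sketches the in-approximation route under the additional injectivity assumption and proves the general statement by the Baire category method: one works in the admissible class $\mathcal{A}\subset v_{\eta,\gamma}+C^\infty_c(\Omega;\R^m)$ with gradients in $U$, shows density of the $\delta$-approximating classes by applying Theorem \ref{thm:rank-1} only along the single direction $a\otimes b$ (for which $Lb\neq0$ suffices), and obtains solutions as points of continuity of the gradient operator; the same argument also delivers the ``infinitely many solutions'' claim, which in your proposal is left vague (re-running the construction with $\epsilon_k\downarrow0$ does not by itself produce distinct maps). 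So even after repairing the target set, your route proves only the case $m\ge n$ with $L$ injective, not the theorem as stated.
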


Note that the constraint $\mathcal{L}(\nabla u)=t$ (i.e., $\nabla u\in\Sigma_t$) restricts the selection of a rank-one direction $a\otimes b$ for lamination as $\mathcal{L}(a\otimes b)=L\cdot(a\otimes b)=0$. This is inevitable in the convex integration method since the gradient of a map involved in approximation should always stay in the manifold of constraint $\Sigma_t$.  As an application of Theorem \ref{thm:main-2}, Theorem \ref{thm:eikonal} can be proved directly by constructing a simple in-approximation in $\Sigma_t$  when $m\ge n\ge2$ and the linear map $L:\R^n\to\R^m$ is injective. This additional hypothesis arises due to the general feature of a differential inclusion in Theorem \ref{thm:main-2} that does not single out a \emph{principal} rank-one direction $a\otimes b$ for lamination. In Section \ref{sec:proof-applications}, we explain the use of such an in-approximation for  the special case of Theorem \ref{thm:eikonal} in terms of Theorem \ref{thm:main-2} and also provide the complete proof of Theorem \ref{thm:eikonal} under the Baire category framework.

The other application focuses on a $T_4$-configuration (see \cite{MSv2} for precise definition). Consider the set $K\subset \M^{2\times 2}_{diag}$ consisting of the four matrices
\begin{equation}\label{app2-1-K-set-definition}
A_1=-A_3=\begin{pmatrix}
3 & 0 \\
0 & -1
\end{pmatrix},\;\;
A_2=-A_4=\begin{pmatrix}
1 & 0 \\
0 & 3
\end{pmatrix},
\end{equation}
where $\M^{2\times 2}_{diag}$ denotes the space of $2\times 2$ diagonal matrices. Such a set $K$ was discovered independently by \cite{Sc, AH, Ca, Ta1} as an example of a compact set $K$ for which $K^{lc}\neq K^{rc}$  and has found striking applications for constructing wild solutions in elliptic system \cite{MSv2}, parabolic system \cite{MRS},  porous media equation \cite{CFG} and active scalar equations \cite{Sy}. Actually, it is easy to check that $K^{rc}$ contains the segments $[A_1,J_2]$, $[A_2,J_3]$, $[A_3,J_4]$, $[A_4,J_1]$ and the convex hull of $\{J_1,J_2,J_3,J_4\}$, where $J_1=-J_3=\mathrm{diag}(-1,-1)$ and $J_2=-J_4=\mathrm{diag}(1,-1)$. On the other hand, since $K$ has no rank-one connection, we simply have $K^{lc}=K$. By the same reason,  the differential inclusion $\nabla u\in K$ only admits the trivial solutions $\nabla u=A_i$ $(i=1,2,3,4)$ due to the rigidity of the four gradient problem \cite{CK1}. Regardless of such rigidity, it is still possible to have the gradient $\nabla u$ concentrated near the matrices $A_1,A_2,A_3,A_4$ by a nontrivial  map $u\in \eta x+W^{1\infty}_0(\Omega;\R^2)$ if $\eta\in K^{rc}$ (see \cite[Corollary 1.5]{MSv1}). 

We can slightly improve this corollary by imposing linear constraints as follows.

\begin{coro}\label{coro:T4-configuration}
Let $\Omega\subset\R^2$ be a bounded domain with Lipschitz boundary, and let $k\in\R\setminus\{0\}$ and $L=\begin{pmatrix}
0 & k \\
1 & 0
\end{pmatrix}$. Let $K\subset\M^{2\times 2}_{diag}$ be the set consisting of the four matrices in (\ref{app2-1-K-set-definition}), and let $\eta\in K^{rc}$, $\gamma\in \R^2$ and $\epsilon>0$. Then there exists a map $u\in W^{1,\infty}(\Omega;\R^2)$ such that
\[
\left\{
\begin{array}{ll}
  \mathrm{dist}(\nabla u,K)<\epsilon & \mbox{a.e. in $\Omega$}, \\
  \mathcal{L}(\nabla u)=0& \mbox{a.e. in $\Omega$}, \\
  u=v_{\eta,\gamma} & \mbox{on $\partial\Omega$,} \\
  \|u-v_{\eta,\gamma}\|_{L^\infty(\Omega)}<\epsilon, &
\end{array}\right.
\]
where $v_{\eta,\gamma}(x):=\eta x+\gamma$ $(x\in\Omega)$.
\end{coro}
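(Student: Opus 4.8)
The plan is to obtain the corollary as a direct application of Theorem~\ref{thm:main-1}, with $m=n=2$, $t=0$, and the open set there taken to be a full $\epsilon$-neighborhood of $K$ inside $\Sigma_0$. Two elementary observations about the data make everything fit. First, the matrix $L$ in the statement has $\det L=-k\neq 0$, so the linear map $L:\R^2\to\R^2$ is injective and hypothesis (\ref{assume-1}) holds (equivalently, $Lb\neq0$ for all $b\in\R^2\setminus\{0\}$). Second, since $\mathcal{L}(\xi)=k\xi_{12}+\xi_{21}$, the subspace $\M^{2\times 2}_{diag}$ of diagonal matrices lies in $\Sigma_0$; moreover, as the coordinate functions $\xi\mapsto\pm\xi_{12}$ and $\xi\mapsto\pm\xi_{21}$ are linear, hence rank-one convex, and vanish on the four diagonal matrices constituting $K$, every $\xi\in K^{rc}$ has $\xi_{12}=\xi_{21}=0$. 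Thus $K^{rc}\subseteq\M^{2\times 2}_{diag}\subseteq\Sigma_0$; in particular $\eta\in\Sigma_0$ and $t:=\mathcal{L}(\eta)=0$, so the constraint $\mathcal{L}(\nabla u)=0$ is automatically compatible with the affine boundary datum $\nabla v_{\eta,\gamma}\equiv\eta$.

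Next I would set $U:=\{\xi\in\Sigma_0:\operatorname{dist}(\xi,K)<\epsilon\}$, which is a bounded open subset of $\Sigma_0$ containing $K$. Since $K$ is a compact subset of $U$, the definition of the rank-one convex hull of a set gives $K^{rc}\subseteq U^{rc}$, hence $\nabla v_{\eta,\gamma}\equiv\eta\in K^{rc}\subseteq U^{rc}$ a.e.\ in $\Omega$. The map $v_{\eta,\gamma}$ is affine, so it belongs to $W^{1,\infty}(\Omega;\R^2)$ and is (trivially) piecewise $C^1$. Theorem~\ref{thm:main-1} then yields $u\in W^{1,\infty}(\Omega;\R^2)$ with $\nabla u\in U$ a.e.\ in $\Omega$, $u=v_{\eta,\gamma}$ on $\partial\Omega$ and $\|u-v_{\eta,\gamma}\|_{L^\infty(\Omega)}<\epsilon$. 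Finally, $\nabla u\in U$ a.e.\ simultaneously gives $\operatorname{dist}(\nabla u,K)<\epsilon$ a.e.\ and $\nabla u\in\Sigma_0$, i.e.\ $\mathcal{L}(\nabla u)=0$ a.e.\ in $\Omega$, which is precisely the assertion.

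In this route there is essentially no obstacle left at the level of the corollary: all the work is carried by Theorem~\ref{thm:main-1}, and the only step with genuine content is the inclusion $K^{rc}\subseteq U^{rc}$, which is exactly where the passage from the (here trivial) lamination convex hull $K^{lc}=K$ to the strictly larger rank-one convex hull $K^{rc}$ enters — the feature that makes the $T_4$-configuration interesting and that Theorem~\ref{thm:main-1} is designed to exploit. If one prefers to mimic the in-approximation argument of \cite{MSv1}, one can instead construct an in-approximation of a compact neighborhood of $K$ inside $\Sigma_0$ using only the two diagonal rank-one directions $\mathrm{diag}(1,0)$ and $\mathrm{diag}(0,1)$ (which lie in $\Sigma_0$ because $\mathcal{L}$ vanishes on $\M^{2\times 2}_{diag}$, so the classical $T_4$ in-approximation can be run inside $\M^{2\times 2}_{diag}\subset\Sigma_0$ and then thickened in the transverse direction to make the sets open in $\Sigma_0$) and apply Theorem~\ref{thm:main-2}; there the main obstacle would be reproving the $T_4$ in-approximation, so the argument above is preferable.
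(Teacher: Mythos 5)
Your argument is correct and is essentially the paper's own proof: the paper likewise takes $U$ to be the union of the open $\epsilon$-balls in $\Sigma_0$ centered at $A_1,\dots,A_4$ (i.e.\ exactly your $\epsilon$-neighborhood of $K$ in $\Sigma_0$), notes $\eta\in K^{rc}\subset U^{rc}$, and applies Theorem~\ref{thm:main-1}. Your additional checks (injectivity of $L$ from $\det L=-k\neq0$, and $K^{rc}\subset\Sigma_0$ so that $t=0$) are correct and simply make explicit what the paper leaves implicit.
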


Here the constraint $\mathcal{L}(\nabla u)=0$ also reads as $\partial_{x_1}u^2+k\partial_{x_2}u^1=0$ or as $\nabla u\in \Sigma_0$.
Observe that the dimensions of $\M^{2\times 2}_{diag}\subsetneq\Sigma_0\subsetneq \M^{2\times 2}$ are 2, 3 and 4, respectively. Targeting the set $K\subset \M^{2\times 2}_{diag}$, the gradient $\nabla u$ of our \emph{approximate} solution $u$ to the differential inclusion $\nabla u\in K$ may go beyond the plane $\M^{2\times 2}_{diag}$ but always stays in the 3-dimensional  manifold $\Sigma_0$. Note also that $\Sigma_0$ is the space of $2\times 2$ symmetric matrices for $k=-1$ and that of $2\times 2$ skew-symmetric matrices if $k=1$.


The rest of the paper is organized as follows. Section \ref{sec:rank-one} concerns a functional tool, Theorem \ref{thm:main-lemma}, for the passage from lamination convex hull to rank-one convex hull in the manifold of constraint $\Sigma_t$.
In Section \ref{sec:rank-1}, we equip with the main tool, Theorem \ref{thm:rank-1}, for rank-one smooth approximation under a general linear constraint that eventually leads, with the help of Theorem \ref{thm:main-lemma}, to Lemma \ref{lem:pre-result}, which is a precursor to the main results of the paper, Theorems \ref{thm:main-1} and \ref{thm:main-2}. Then the proof of Theorems \ref{thm:main-1} and \ref{thm:main-2} is provided in Section \ref{sec:proof-main-theorems}. Section \ref{sec:proof-applications} finishes the proof of the applications, Theorem \ref{thm:eikonal} and Corollary \ref{coro:T4-configuration}. Lastly, the proof of Theorem \ref{thm:rank-1} is included in Section \ref{sec:proof-rank-1}.

In closing this section, we add some notations. For a vector $a\in\R^n$, we write $|a|=(\sum_{i}a_i^2)^{1/2}$ for its Euclidean norm. For a matrix $\xi\in\M^{m\times n}$, we denote by $|\xi|=(\sum_{i,j}\xi_{ij}^2)^{1/2}$ the Hilbert-Schmidt norm of $\xi$. For a measurable set $E\subset\R^n$, its Lebesgue measure is denoted by $|E|.$ Some other notations will be introduced as we go along the paper if necessary.

\section{Rank-one convex functions and hulls}\label{sec:rank-one}
This section prepares a powerful tool that enables us to handle rank-one convex hulls instead of lamination convex hulls, which can be strictly smaller than the former ones. Our version of such a tool, Theorem \ref{thm:main-lemma}, for the manifold of constraint $\Sigma_t$ originates from   \cite[Theorem 3.1]{MSv1} that is dealing with the case of a constraint on a minor of $\nabla u$ and unconstrained case and that was motivated by and generalized from a result of \cite{Pe1}. We thus closely follow the exposition and relevant proofs from Section 3 of \cite{MSv1} but add more details for the reader's convenience.

We first introduce many definitions. Let $\mathcal O$ be an open set in $\M^{m\times n}$ or in $\Sigma_t$. A function $f:\mathcal O\to \R$ is called \emph{rank-one convex} if, for all $\xi_1,\xi_2\in\mathcal O$ with $[\xi_1,\xi_2]\subset\mathcal O$ and $\rank(\xi_1-\xi_2)=1,$ we have
\[
f(\lambda\xi_1+(1-\lambda)\xi_2)\le\lambda f(\xi_1)+(1-\lambda)f(\xi_2)
\]
for all $\lambda\in[0,1].$

We denote by $\mathcal P$ the set of all probability (Borel) measures on $\M^{m\times n}$ with compact support. For a compact set $K\subset\M^{m\times n}$, let $\mathcal{P}(K)$ denote the set of all $\nu\in\mathcal{P}$ with $\mathrm{supp}(\nu)\subset K$. For each $\nu\in\mathcal P$, we write its center of mass as $\bar\nu=\int_{\M^{m\times n}}\xi d\nu(\xi)\in\M^{m\times n}$.

Let $\nu\in\mathcal P$. For a continuous function $f:\M^{m\times n}\to\R$, we write $\langle\nu, f\rangle=\int_{\M^{m\times n}}f(\xi)d\nu(\xi)\in\R$.
We say that $\nu$ is a \emph{laminate} if $\langle \nu,f\rangle \ge f(\bar\nu)$ for every rank-one convex function $f:\M^{m\times n}\to\R$. For a  compact set $K\subset\M^{m\times n},$ we define
\[
\mathcal M^{rc}(K)=\{\nu\in\mathcal P(K)\,|\,\mbox{$\nu$ is a laminate}\}.
\]
By \cite{Pe1}, it has been well known that $K^{rc}=\{\bar\nu\,|\,\nu\in\mathcal{M}^{rc}(K)\}.$

Let $\xi\in\M^{m\times n}$, and let $\delta_{\xi}$ denote the Dirac mass at $\xi$. As $\mathrm{supp}(\delta_{\xi})=\{\xi\}$, we have $\delta_{\xi}\in\mathcal P$. Also, $\bar{\delta}_{\xi}=\xi.$ Thus for any continuous function $f:\M^{m\times n}\to\R$, we have $\langle \delta_\xi,f\rangle=f(\xi)=f(\bar\delta_\xi)$; hence $\delta_\xi$ is a laminate.

Let $\OO$ be an open set in $\Sigma_t$.
We define a class $\mathfrak{L}(\OO)$ of laminates in $\mathcal{P}$, called \emph{laminates of finite order} in $\OO$, inductively as follows:
\begin{itemize}
\item[(i)] For each $\xi\in\OO$, we have $\delta_\xi\in\mathfrak L(\OO)$, called a \emph{laminate of order} $0$.
\item[(ii)] If $k\in\N$, $\lambda_1,\cdots,\lambda_k> 0$, $\sum_{j=1}^k\lambda_j=1$, $\xi_1,\cdots,\xi_k\in\OO$ are pairwise distinct, $\sum_{j=1}^k\lambda_j \delta_{\xi_j}\in\mathfrak L(\OO)$ is a laminate of order $k-1$, and $\xi_k=s\eta_1+ (1-s)\eta_2$ for some $0< s< 1$ and some rank-one segment $[\eta_1,\eta_2]\subset\OO$ with $\eta_i\neq \xi_j$ $(i=1,2,\, j=1,\cdots,k-1)$, then $\sum_{j=1}^{k-1}\lambda_j \delta_{\xi_j}+s\lambda_k\delta_{\eta_1}+(1-s)\lambda_k\delta_{\eta_2}\in\mathfrak L(\OO)$, called a \emph{laminate of order} $k$.
\end{itemize}
From definition, it follows that $\langle\nu, f\rangle\ge f(\bar\nu)$ for each $\nu\in\mathfrak{L}(\mathcal{O})$ and each rank-one convex function $f:\mathcal{O}\to\R.$

Let $K\subset \M^{m\times n}$ be a compact set. From  the definition of $K^{rc}$, for any  $\xi\in\M^{m\times n}$, we have that  $\xi\not\in K^{rc}$ if and only if $f(\xi)>0$ for some rank-one convex function $f:\M^{m\times n}\to\R$ with $f\le 0$ on $K$. Now, let $K$ be a compact subset of $\Sigma_t$. The \emph{rank-one convex hull} $K^{rc,\Sigma_t}$ of $K$ \emph{relative to} $\Sigma_t$ is a subset of $\Sigma_t$ defined as follows: For each $\xi\in\Sigma_t$, $\xi\not\in K^{rc,\Sigma_t}$ if and only if $f(\xi)>0$ for some rank-one convex function $f:\Sigma_t\to\R$ with $f\le 0$ on $K$. Then the \emph{rank-one convex hull} $E^{rc,\Sigma_t}$ of a set $E\subset\Sigma_t$ \emph{relative to} $\Sigma_t$ is defined to be
\[
E^{rc,\Sigma_t}=\bigcup\{K^{rc,\Sigma_t}\,|\,K\subset E,\,\mbox{$K$ is compact}\}.
\]
With this definition, the rank-one convex hull $V^{rc,\Sigma_t}$ of any open set $V$ in $\Sigma_t$ relative to $\Sigma_t$ is also open in $\Sigma_t.$
Another simple fact that is needed later is as follows.
\begin{pro}\label{prop:rank-one-1}
Let $K$ be a compact subset of $\Sigma_t.$ Then $K^{rc,\Sigma_t}$ and $K^{rc}$ are both compact, and
\[
K^{rc,\Sigma_t}\subset K^{rc}\subset\Sigma_t.
\]
\end{pro}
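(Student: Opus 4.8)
The plan is to prove the three assertions of Proposition \ref{prop:rank-one-1} in order: first the inclusion $K^{rc}\subset\Sigma_t$, then compactness of $K^{rc}$, then the inclusion $K^{rc,\Sigma_t}\subset K^{rc}$, and finally compactness of $K^{rc,\Sigma_t}$.

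First I would show $K^{rc}\subset\Sigma_t$. Since $K\subset\Sigma_t$, both linear functions $\xi\mapsto\LL(\xi)-t$ and $\xi\mapsto t-\LL(\xi)$ are rank-one convex (indeed affine) and vanish on $K$, so they are nonpositive on $K^{rc}$ by definition of the rank-one convex hull; hence $\LL(\xi)=t$ for every $\xi\in K^{rc}$, i.e., $K^{rc}\subset\Sigma_t$. For compactness of $K^{rc}$, closedness is immediate from the definition as an intersection of closed sets $\{f\le\sup_K f\}$ over continuous (rank-one convex) $f$; boundedness follows by applying the rank-one convex functions $\xi\mapsto\pm\xi_{ij}$ componentwise (or $\xi\mapsto|\xi|^2$, which is rank-one convex), so $K^{rc}$ lies in a bounded set and is therefore compact.

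Next, for $K^{rc,\Sigma_t}\subset K^{rc}$, I would argue by contraposition. Suppose $\xi\in\Sigma_t$ but $\xi\notin K^{rc}$. Then there is a rank-one convex function $f:\M^{m\times n}\to\R$ with $f\le 0$ on $K$ and $f(\xi)>0$. The restriction $f|_{\Sigma_t}:\Sigma_t\to\R$ is rank-one convex in the relative sense (any rank-one segment lying in $\Sigma_t$ is in particular a rank-one segment in $\M^{m\times n}$, and convexity along it is inherited), still satisfies $f|_{\Sigma_t}\le 0$ on $K$ and $f|_{\Sigma_t}(\xi)>0$; hence $\xi\notin K^{rc,\Sigma_t}$. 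Taking the contrapositive and recalling $K^{rc,\Sigma_t}\subset\Sigma_t$ by construction gives $K^{rc,\Sigma_t}\subset K^{rc}$. Finally, $K^{rc,\Sigma_t}$ is closed in $\Sigma_t$ (again an intersection of closed sets) and, being contained in the compact set $K^{rc}$, it is compact.

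The only mildly delicate point—and the one I would state carefully rather than the routine inclusions—is the passage between rank-one convex functions on all of $\M^{m\times n}$ and rank-one convex functions on the flat submanifold $\Sigma_t$ in the definition of $K^{rc,\Sigma_t}$: one direction (restriction) is used above and is straightforward because rank-one segments contained in $\Sigma_t$ are genuine rank-one segments, so no extension or approximation argument is needed here. (The reverse, more subtle direction, extending a relatively rank-one convex function off $\Sigma_t$, is precisely the content of Theorem \ref{thm:main-lemma} and is not required for this proposition.) I expect no real obstacle; the proof is essentially a bookkeeping argument with the defining separation properties of the two hulls.
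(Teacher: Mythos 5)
Your proof is correct and follows essentially the same route as the paper: the key inclusion $K^{rc,\Sigma_t}\subset K^{rc}$ is obtained, exactly as in the paper, by restricting a separating rank-one convex function on $\M^{m\times n}$ to $\Sigma_t$ (using that rank-one segments in $\Sigma_t$ are rank-one segments in $\M^{m\times n}$), while for $K^{rc}\subset\Sigma_t$ you test with the affine functions $\pm(\LL(\cdot)-t)$ where the paper tests with $\dist(\cdot,\bar B\cap\Sigma_t)$ — an immaterial difference, with the paper's choice also delivering boundedness in one stroke. Your compactness claims are at the same level of detail as the paper's (which says they ``easily follow from the definitions''); note only that closedness of the sublevel sets tacitly uses continuity (local Lipschitzness) of rank-one convex functions, including those defined only on $\Sigma_t$, an implicit appeal the paper makes as well.
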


\begin{proof}
Compactness of $K^{rc,\Sigma_t}$ and $K^{rc}$ easily follows from the definitions.

To show that $K^{rc}\subset\Sigma_t$, choose an open ball $B$ in $\M^{m\times n}$ with $K\subset B$, and  define $f(\xi)=\mathrm{dist}(\xi,\bar{B}\cap\Sigma_t)$ for all $\xi\in\M^{m\times n}.$ As $\bar{B}\cap\Sigma_t$ is convex and compact, the function $f:\M^{m\times n}\to\R$ is (rank-one) convex and satisfies that $f(\xi)>0$ for all $\xi\in \M^{m\times n}\setminus(\bar{B}\cap\Sigma_t)$. Since $f=0$ on $K$, we have from definition that $\xi\not\in K^{rc}$ for all $\xi\in \M^{m\times n}\setminus(\bar{B}\cap\Sigma_t)$; thus $K^{rc}\subset\Sigma_t$.

Next, let $\xi\in\Sigma_t\setminus K^{rc}$. By definition, we have $g(\xi)>0$ for some rank-one convex function $g:\M^{m\times n}\to\R$ with $g\le 0$ on $K$. As $g|_{\Sigma_t}:\Sigma_t\to\R$ is rank-one convex, we have $\xi\not\in K^{rc,\Sigma_t}.$ Thus $K^{rc,\Sigma_t}\subset K^{rc}$.
\end{proof}

We now state the main result of this section as follows.

\begin{thm}\label{thm:main-lemma}
Let $K$ be a compact subset of $\Sigma_t$. Then $K^{rc}=K^{rc,\Sigma_t}$. Let $\nu\in\mathcal M^{rc}(K)$, and let $\OO$ be an open set in $\Sigma_t$ containing $K^{rc}$. Then there exists a sequence $\nu_j\in\mathfrak L(\OO)$ with $\bar\nu_j=\bar\nu$ that converges weakly* to $\nu$ in $\mathcal P$.
\end{thm}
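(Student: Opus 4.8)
The plan is to adapt the proof of \cite[Theorem~3.1]{MSv1}, with the flat manifold $\Sigma_t$ playing the role of the manifold of constraint used there; the only genuinely new input is one linear-algebra fact about $\Sigma_t$, after which the laminate machinery of \cite{MSv1} can be run relative to $\Sigma_t$. That fact is: the rank-one matrices lying in the linear hyperplane $\Sigma_0:=\{\xi\in\M^{m\times n}:\LL(\xi)=0\}$ — i.e.\ the matrices $a\otimes b$ with $\LL(a\otimes b)=(L^\top a)\cdot b=0$ — span $\Sigma_0$. Indeed, if $\zeta\in\M^{m\times n}$ satisfies $\langle\zeta,a\otimes b\rangle=0$ whenever $(L^\top a)\cdot b=0$, then for each fixed $a$ the linear functional $b\mapsto\langle\zeta,a\otimes b\rangle=(\zeta^\top a)\cdot b$ vanishes on $(L^\top a)^\perp$ (or on all of $\R^n$ when $L^\top a=0$), so $\zeta^\top a\in\R\,L^\top a$ for every $a$; a short argument shows the proportionality constant does not depend on $a$, whence $\zeta\in\R L$, which meets $\Sigma_0$ only in $\{0\}$ since $\LL(L)=|L|^2\neq0$. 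Two consequences are recorded: (i) every rank-one convex $f:\Sigma_t\to\R$ is convex along a \emph{fixed} basis of directions of $\Sigma_t$, hence separately convex in suitable affine coordinates, hence locally Lipschitz — in particular continuous; and (ii) $K^{rc}$ is compact and contained in $\Sigma_t$ (Proposition~\ref{prop:rank-one-1}) and lamination convex (immediate from the definition of $K^{rc}$, since a rank-one convex test function cannot exceed its endpoint values on a rank-one segment), so a rank-one segment joining two points of $K^{rc}$ stays in $K^{rc}$.

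I would prove the laminate statement first, as the hull identity then follows. Let $\nu\in\mathcal M^{rc}(K)$ and let $\OO\supset K^{rc}$ be open in $\Sigma_t$. By the standard density of laminates of finite order among laminates \cite{Pe1,MSv1}, $\nu$ is a weak$^*$ limit of finite-order laminates $\mu_\ell$ with $\bar\mu_\ell=\bar\nu$ and $\mathrm{supp}\,\mu_\ell$ contained in shrinking neighborhoods of $\mathrm{supp}\,\nu\subset K$. The task is to make such approximants live inside $\Sigma_t$, with center of mass exactly $\bar\nu$, and to arrange that they belong to $\mathfrak L(\OO)$. Since $\LL$ is linear it is rank-one affine, so $(\LL(\cdot)-t)^2$ is a convex quadratic along every rank-one segment, and any rank-one splitting applied to an atom lying in $\Sigma_t$ that leaves $\Sigma_t$ can be replaced by a finite chain of rank-one splittings within $\Sigma_t$: the net displacement it is meant to realize, projected onto $\Sigma_0$, is a finite combination of admissible rank-one directions by the spanning property above, and the replacement costs an arbitrarily small weak$^*$ error because $K^{rc}$ is lamination convex and bounded, so every intermediate atom and segment can be kept in $K^{rc}\subset\OO$. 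A final small perturbation of atoms and weights inside $\Sigma_t$ — again available because the admissible rank-one directions span $\Sigma_0$ and $K^{rc}$ sits inside the open set $\OO$ — enforces the bookkeeping in the definition of $\mathfrak L(\OO)$ (pairwise distinct atoms, splitting parameters strictly in $(0,1)$, segments contained in $\OO$) while preserving the center of mass. This produces $\nu_j\in\mathfrak L(\OO)$ with $\bar\nu_j=\bar\nu$ and $\nu_j\overset{*}{\rightharpoonup}\nu$.

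To deduce $K^{rc}=K^{rc,\Sigma_t}$, it suffices by Proposition~\ref{prop:rank-one-1} to show $K^{rc}\subset K^{rc,\Sigma_t}$. Let $\xi_0\in K^{rc}$; by \cite{Pe1}, $\xi_0=\bar\nu$ for some $\nu\in\mathcal M^{rc}(K)$. Fix a \emph{bounded} open $\OO\supset K^{rc}$ in $\Sigma_t$ and take $\nu_j\in\mathfrak L(\OO)$ as above, so $\bar\nu_j=\xi_0$, $\nu_j\overset{*}{\rightharpoonup}\nu$, and all $\mathrm{supp}\,\nu_j\subset\bar\OO$ are uniformly bounded. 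For an arbitrary rank-one convex $f:\Sigma_t\to\R$ with $f\le0$ on $K$: $f$ is continuous by (i), hence $\langle\nu_j,f\rangle\to\langle\nu,f\rangle=\int_K f\,d\nu\le0$; on the other hand $\langle\nu_j,f\rangle\ge f(\bar\nu_j)=f(\xi_0)$ by the defining property of laminates of finite order (the remark following the definition of $\mathfrak L(\OO)$). Therefore $f(\xi_0)\le0$, which is exactly $\xi_0\in K^{rc,\Sigma_t}$.

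I expect the main obstacle to be the passage described in the second paragraph: converting a generic finite-order approximant of $\nu$, whose intermediate splittings need not respect $\Sigma_t$, into one confined to $\Sigma_t$ while keeping the center of mass fixed and the weak$^*$ error small. The spanning property of the admissible rank-one cone in $\Sigma_0$ is precisely what makes this possible, and the careful quantitative bookkeeping — choosing the replacement chains and the final perturbation so that all atoms and segments remain in $\OO$ — is the delicate part.
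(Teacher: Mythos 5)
Your overall architecture (prove the finite-order approximation first, then deduce $K^{rc}=K^{rc,\Sigma_t}$ by testing with a rank-one convex $f:\Sigma_t\to\R$, using that such $f$ are separately convex along a rank-one basis of $\Sigma_0$ and hence continuous) is reasonable, and both the spanning fact and the deduction in your third paragraph are fine. The genuine gap is in your second paragraph, which is where the entire content of the theorem lies. Starting from Pedregal-type finite-order approximants $\mu_\ell$ built by splittings in $\M^{m\times n}$, you propose to ``replace'' each splitting that leaves $\Sigma_t$ by a finite chain of splittings inside $\Sigma_t$, arguing that the displacement, projected onto $\Sigma_0$, is a combination of admissible rank-one directions and that all intermediate atoms can be kept in $K^{rc}$ because $K^{rc}$ is lamination convex and bounded. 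This does not work as stated: writing a displacement as a sum of rank-one matrices of $\Sigma_0$ is pure linear algebra and carries no laminate structure --- once you split along $R_1$, then $R_2$, and so on, the intermediate atoms are forced to specific points and the weights are dictated by barycenter constraints, and nothing guarantees that these intermediate atoms remain in $K^{rc}$ (lamination convexity of $K^{rc}$ only says that a rank-one segment \emph{between two points of} $K^{rc}$ stays in $K^{rc}$; it does not allow you to move from a point of $K^{rc}$ along admissible directions and stay inside, nor to reach prescribed nearby targets through chains contained in $K^{rc}$), nor that the modified measure is weak* close to $\mu_\ell$ with unchanged barycenter. In effect you are assuming that a measure which is a laminate in the global sense (dual to rank-one convex functions on all of $\M^{m\times n}$) can be realized, up to small error, by splittings confined to $\Sigma_t$ --- but that is precisely what the theorem asserts, so the argument is circular at its decisive step.

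What is missing is the analytic bridge between the global notion of laminate and rank-one structure inside $\Sigma_t$. The paper supplies it with Lemma \ref{lem:main-approximation}: every rank-one convex $f$ on a $\Sigma_t$-open neighborhood of $K^{rc,\Sigma_t}$ can be approximated, uniformly on $K^{rc,\Sigma_t}$, by a rank-one convex function on all of $\M^{m\times n}$; this is built from the gluing and extension results (Lemmas \ref{lem:rank-1-property}, \ref{lem:zero-level-set}, \ref{lem:rank-1-extension}) and the penalization $f(\pi(\xi))+\epsilon|\xi|^2+k|\LL(\xi)-t|^2$ of Lemma \ref{lem:approx-linear-whole}. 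With this in hand the hull equality follows by direct testing, and the finite-order approximation follows from a Hahn--Banach separation applied to the convex set $\mathcal{F}=\{\mu\in\mathfrak L(U)\,|\,\bar\mu=\bar\nu\}$ combined with the representation of the relative rank-one convex envelope $R_U f$ by laminates of finite order (Lemma \ref{lem:rank-one-envelope}): if $\nu\notin\bar{\mathcal F}^{\,*}$, the separating function, after passing to its envelope and applying Lemma \ref{lem:main-approximation}, contradicts the fact that $\nu$ is a laminate. To complete your proof you would need either this functional approximation result or an honest constructive scheme for in-$\Sigma_t$ splittings with quantitative control of the intermediate atoms and weights; the perturbation argument you sketch provides neither.
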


An immediate consequence of this theorem is as follows.

\begin{coro}\label{prop:rank-1-hull-open}
If $V$ is an open set in $\Sigma_t$, then $V^{rc}=V^{rc,\Sigma_t}$, and $V^{rc}$ is open  in $\Sigma_t$.
\end{coro}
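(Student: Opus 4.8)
The plan is to reduce everything to Theorem \ref{thm:main-lemma} by exhausting $V$ with compact sets. Write $V=\bigcup_{k\in\N} K_k$ where each $K_k\subset\Sigma_t$ is compact and $K_k\subset\inte_{\Sigma_t}(K_{k+1})$ (e.g. take $K_k=\{\xi\in V : \dist(\xi,\partial|_{\Sigma_t}V)\ge 1/k,\ |\xi|\le k\}$). For the equality $V^{rc}=V^{rc,\Sigma_t}$: by definition both hulls are the union over compact $K\subset V$ of $K^{rc}$ resp.\ $K^{rc,\Sigma_t}$, and Theorem \ref{thm:main-lemma} gives $K^{rc}=K^{rc,\Sigma_t}$ for every such compact $K$; taking the union over $K$ yields the claimed identity immediately. (One should note here that a compact $K\subset V$ need not lie in some $K_k$, but this is harmless since we range over all compact subsets.)

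For openness of $V^{rc}$ in $\Sigma_t$, the idea is: it suffices to show that each point $\xi\in V^{rc}$ has a $\Sigma_t$-neighborhood contained in $V^{rc}$. Fix $\xi\in V^{rc}=V^{rc,\Sigma_t}$. By definition there is a compact $K\subset V$ with $\xi\in K^{rc,\Sigma_t}=K^{rc}$. By Proposition \ref{prop:rank-one-1}, $K^{rc}$ is compact, and by Theorem \ref{thm:main-lemma}, $K^{rc}\subset\OO$ for any open $\OO\supset K^{rc}$; in particular we may choose an open set $W$ in $\Sigma_t$ with $K^{rc}\subset W\subset\bar W\subset V$ (using compactness of $K^{rc}$ and $K^{rc}\subset V$, with $\bar W$ compact). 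Pick $\nu\in\mathcal M^{rc}(K)$ with $\bar\nu=\xi$. Theorem \ref{thm:main-lemma} produces laminates of finite order $\nu_j\in\mathfrak L(W)$ with $\bar\nu_j=\xi$ converging weakly* to $\nu$; already $\nu_1\in\mathfrak L(W)$ has barycenter $\xi$, so $\xi\in (\bar W)^{rc,\Sigma_t}$ witnessed by a laminate supported in $W$.

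The key step — and the main obstacle — is a \emph{stability} observation: since $\nu_1$ is a laminate of finite order with finite support $\{\eta_1,\dots,\eta_N\}\subset W$ and $W$ is open, a small perturbation of the barycenter $\xi$ can be realized by perturbing the $\eta_i$ slightly within $W$ while keeping the splitting structure and weights. More precisely, the barycenter map from the (open) set of admissible support configurations in $W^N$ to $\Sigma_t$ is continuous and, by perturbing the last rank-one splitting $\eta_k=s\eta_1'+(1-s)\eta_2'$, locally surjective onto a $\Sigma_t$-neighborhood of $\xi$; every nearby $\xi'$ is thus the barycenter of a laminate of finite order supported in $W$, hence in $\bar W\subset V$, whence $\xi'\in(\bar W)^{rc,\Sigma_t}\subset V^{rc,\Sigma_t}=V^{rc}$. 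Therefore $V^{rc}$ contains a $\Sigma_t$-neighborhood of each of its points and is open. Alternatively — and more cleanly — one can bypass the perturbation argument entirely: the paper has already recorded (in the discussion preceding Proposition \ref{prop:rank-one-1}) that $V^{rc,\Sigma_t}$ is open in $\Sigma_t$ for open $V$, so once the identity $V^{rc}=V^{rc,\Sigma_t}$ is established the openness of $V^{rc}$ is immediate; I would present the short route as the proof and relegate the perturbation discussion, if included at all, to a remark.
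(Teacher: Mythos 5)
Your short route is essentially the paper's own argument: the corollary is presented there as an immediate consequence of Theorem \ref{thm:main-lemma}, the equality $V^{rc}=V^{rc,\Sigma_t}$ coming from taking unions of $K^{rc}=K^{rc,\Sigma_t}$ over compact $K\subset V$, and openness from the already-recorded fact that $V^{rc,\Sigma_t}$ is open in $\Sigma_t$ for open $V$. One caution about the digression you would relegate: perturbing only the last rank-one splitting does not produce a valid laminate of finite order (the parent atom no longer equals the convex combination of the perturbed endpoints), so that sketch would have to be repaired by translating the \emph{entire} laminate by a small vector in $\ker\mathcal L$; since you do not rely on it, the proposal stands as correct.
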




From this corollary, we have the following.

\begin{coro}\label{prop:sets}
Let $U$ be a bounded open set in $\Sigma_t$. Then for any compact set $K\subset U^{rc}$, there exists an open set $V$ in $\Sigma_t$ with $\bar V\subset U$ such that
\[
K\subset V^{rc}\quad\mbox{and}\quad \overline{({V}^{rc})}\subset U^{rc}.
\]
\end{coro}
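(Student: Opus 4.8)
The plan is to exhaust the open set $U^{rc}$ from inside by sets of the form $V^{rc}$ with $\bar V \subset U$, and then use compactness of $K$ to finish. First I would recall from Corollary \ref{prop:rank-1-hull-open} that $U^{rc} = U^{rc,\Sigma_t}$ and that this set is open in $\Sigma_t$; moreover, by the definition of the rank-one convex hull of a (not necessarily compact) set as a union over compact subsets, for every $\xi \in U^{rc}$ there is a compact set $C_\xi \subset U$ with $\xi \in C_\xi^{rc}$. Since $U$ is open and $C_\xi$ is compact, one can find an open set $W_\xi$ in $\Sigma_t$ with $C_\xi \subset W_\xi$ and $\bar W_\xi \subset U$ (e.g. a finite union of small balls around the points of $C_\xi$, using boundedness of $U$ to keep closures inside). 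Then $\xi \in C_\xi^{rc} \subset W_\xi^{rc}$, and by monotonicity $W_\xi^{rc} \subset U^{rc}$. Thus $\{W_\xi^{rc}\}_{\xi \in U^{rc}}$ is an open cover (in $\Sigma_t$) of $U^{rc}$, each member having closure-in-$\Sigma_t$ controlled as we now discuss.

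Next I would pass to $K$. Since $K \subset U^{rc}$ is compact and $\{W_\xi^{rc}\}$ covers it, extract a finite subcover $W_{\xi_1}^{rc}, \dots, W_{\xi_N}^{rc}$ and set $V = W_{\xi_1} \cup \dots \cup W_{\xi_N}$. Then $V$ is open in $\Sigma_t$, $\bar V = \bar W_{\xi_1} \cup \dots \cup \bar W_{\xi_N} \subset U$, and
\[
K \subset \bigcup_{i=1}^N W_{\xi_i}^{rc} \subset \Big(\bigcup_{i=1}^N W_{\xi_i}\Big)^{rc} = V^{rc},
\]
using that $W_{\xi_i} \subset V$ implies $W_{\xi_i}^{rc} \subset V^{rc}$ by monotonicity of the rank-one convex hull. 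This gives the first required inclusion $K \subset V^{rc}$.

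For the second inclusion $\overline{(V^{rc})} \subset U^{rc}$ I would argue as follows. Since $\bar V$ is a compact subset of $U$ (here boundedness of $U$ is used to ensure $\bar V$ is compact, being closed and bounded in $\Sigma_t$), Proposition \ref{prop:rank-one-1} applies to give that $(\bar V)^{rc}$ is compact, and clearly $(\bar V)^{rc} \subset U^{rc}$ by monotonicity together with the fact that $U^{rc}$, being a union of $C^{rc}$ over compact $C \subset U$, contains $(\bar V)^{rc}$ since $\bar V \subset U$ is itself such a compact set. Now $V^{rc} \subset (\bar V)^{rc}$, and the latter is compact hence closed, so $\overline{(V^{rc})} \subset (\bar V)^{rc} \subset U^{rc}$, as desired.

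The only genuinely delicate point is the bookkeeping around closures versus relative closures in $\Sigma_t$: one must make sure that when we write $\bar V$ or $\overline{(V^{rc})}$ these are taken in $\Sigma_t$ and that $\bar V$ is compact, for which the boundedness hypothesis on $U$ is exactly what is needed. Everything else is a routine compactness argument combined with the monotonicity of $(\cdot)^{rc}$ and the already-established openness of rank-one convex hulls of open sets (Corollary \ref{prop:rank-1-hull-open}) and compactness of rank-one convex hulls of compact sets (Proposition \ref{prop:rank-one-1}).
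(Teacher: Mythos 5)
Your proof is correct and takes essentially the same route as the paper's: you produce an open $V$ with $\bar V\subset U$ and $K\subset V^{rc}$ by a compactness/covering argument resting on the definition of $U^{rc}$ as a union of hulls of compact subsets and on the openness of hulls of open sets (Corollary \ref{prop:rank-1-hull-open}), and then conclude with the identical chain $\overline{(V^{rc})}\subset(\bar V)^{rc}\subset U^{rc}$. The only cosmetic difference is that the paper covers $K$ by the nested family $U_\epsilon^{rc}$ with $U_\epsilon=\{\xi\in U\,|\,\mathrm{dist}(\xi,\partial|_{\Sigma_t}U)>\epsilon\}$, so its $V$ is a single $U_{\epsilon_0}$ rather than your finite union of neighborhoods of witnessing compact sets.
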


\begin{proof}
For each $\epsilon>0$, let
\[
U_\epsilon=\{\xi\in U\,|\,\mathrm{dist}(\xi,\partial|_{\Sigma_t}U)>\epsilon\}.
\]
By the definition of $U^{rc}$, we easily see that
\[
U^{rc}=\cup_{\epsilon>0}(\bar{U}_\epsilon)^{rc}=\cup_{\epsilon>0}U_\epsilon^{rc}.
\]
Let $K\subset U^{rc}$ be any compact set; then we  have $K\subset U_{\epsilon_0}^{rc}$ for some $\epsilon_0>0$ since $\{U_\epsilon^{rc}\}_{\epsilon>0}$ is an open covering for $K$ by Corollary \ref{prop:rank-1-hull-open}. We set $V=U_{\epsilon_0}$. By definition, we now have
\[
\overline{(V^{rc})}\subset (\bar{V})^{rc}\subset U^{rc}.
\]
\end{proof}

We now pay attention to the proof of Theorem \ref{thm:main-lemma} that requires  two ingredients. The first one is on the representation of the \emph{rank-one convex envelope} of a continuous function in terms of laminates of finite order.

\begin{lem}\label{lem:rank-one-envelope}
Let $\OO$ be an open set in $\Sigma_t$, and let $f:\OO\to\R$ be a continuous function. Let $R_\OO f:\OO\to\R\cup\{-\infty\}$ be the function defined by
\[
R_\OO f(\xi)=\sup\big\{g(\xi)\,|\, \mbox{$g:\OO\to\R$ is rank-one convex, $g\le f$ in $\OO$} \big\}
\]
for all $\xi\in\OO$. Assume that there exists a rank-one convex function $g_0:\OO\to\R$ with $g_0\le f$ in $\OO$. Then for each $\xi\in\OO$, we have
\[
R_\OO f(\xi)=\inf\big\{\langle\nu,f\rangle\,|\, \nu\in\mathfrak{L}(\OO),\;\bar\nu=\xi\big\}\in\R.
\]
\end{lem}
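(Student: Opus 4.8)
The plan is to establish the two inequalities
\[
R_\OO f(\xi)\ \le\ \inf\big\{\langle\nu,f\rangle\,|\,\nu\in\mathfrak L(\OO),\ \bar\nu=\xi\big\}\ \le\ R_\OO f(\xi),
\]
and along the way to check that the quantities involved are finite (hence the claimed value lies in $\R$). The easy direction is ``$\le$'': if $g:\OO\to\R$ is rank-one convex with $g\le f$, then since every $\nu\in\mathfrak L(\OO)$ satisfies $\langle\nu,g\rangle\ge g(\bar\nu)$ (recorded in the excerpt right after the inductive definition of $\mathfrak L(\OO)$), we get $\langle\nu,f\rangle\ge\langle\nu,g\rangle\ge g(\bar\nu)=g(\xi)$ whenever $\bar\nu=\xi$; taking the supremum over such $g$ gives $R_\OO f(\xi)\le\inf\{\langle\nu,f\rangle:\nu\in\mathfrak L(\OO),\ \bar\nu=\xi\}$. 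Also $R_\OO f(\xi)\ge g_0(\xi)>-\infty$, and taking $\nu=\delta_\xi\in\mathfrak L(\OO)$ shows the infimum is $\le f(\xi)<\infty$, so both sides are finite real numbers.

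For the reverse inequality, I would define $F(\xi):=\inf\{\langle\nu,f\rangle:\nu\in\mathfrak L(\OO),\ \bar\nu=\xi\}$ and show that $F$ is itself rank-one convex on $\OO$ with $F\le f$; then by definition of $R_\OO f$ we get $F\le R_\OO f$, which is exactly what is needed. That $F\le f$ is immediate from $\delta_\xi\in\mathfrak L(\OO)$. The rank-one convexity is the heart of the matter: given $\eta_1,\eta_2\in\OO$ with $[\eta_1,\eta_2]\subset\OO$, $\rank(\eta_1-\eta_2)=1$, and $\xi=s\eta_1+(1-s)\eta_2$ with $s\in(0,1)$, I must show $F(\xi)\le sF(\eta_1)+(1-s)F(\eta_2)$. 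Fix $\varepsilon>0$ and pick $\nu_i\in\mathfrak L(\OO)$ with $\bar\nu_i=\eta_i$ and $\langle\nu_i,f\rangle\le F(\eta_i)+\varepsilon$. The natural candidate is $\mu:=s\nu_1+(1-s)\nu_2$, which has $\bar\mu=\xi$ and $\langle\mu,f\rangle\le sF(\eta_1)+(1-s)F(\eta_2)+\varepsilon$; the remaining task is to verify $\mu\in\mathfrak L(\OO)$, and then let $\varepsilon\downarrow0$.

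The main obstacle, then, is showing that the convex combination $\mu=s\nu_1+(1-s)\nu_2$ of two laminates of finite order along a rank-one segment is again a laminate of finite order in $\OO$. The clause~(ii) in the inductive definition is stated for \emph{one} splitting step applied to a single atom with the pairwise-distinctness and $\eta_i\ne\xi_j$ provisos, so one cannot simply quote it; instead I would prove a \emph{splitting lemma}: if $\sum_j\lambda_j\delta_{\xi_j}\in\mathfrak L(\OO)$ and one atom $\xi_{j_0}$ is written as $s'\zeta_1+(1-s')\zeta_2$ with $[\zeta_1,\zeta_2]\subset\OO$ a rank-one segment, then replacing $\lambda_{j_0}\delta_{\xi_{j_0}}$ by $s'\lambda_{j_0}\delta_{\zeta_1}+(1-s')\lambda_{j_0}\delta_{\zeta_2}$ (and merging atoms, discarding zero weights) yields an element of $\mathfrak L(\OO)$. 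This is proved by induction on the order, carefully handling coincidences of atoms via the standard device of perturbing the splitting points slightly within the open rank-one segment and using a density/limiting argument, or by a more bookkeeping-heavy direct induction. Granting this splitting lemma, $\mu$ is obtained from, say, $\nu_1$ (of some finite order) by successively splitting its atoms according to the atomic structure of $\nu_2$ rescaled — more precisely, one builds $\mu$ by starting from $\delta_{\eta_1}$ or directly from $\nu_1$, first splitting $\delta_\xi\in\mathfrak L(\OO)$ along $[\eta_1,\eta_2]$ into $s\delta_{\eta_1}+(1-s)\delta_{\eta_2}$, and then applying the splitting lemma finitely many times to refine $\delta_{\eta_1}$ into $\nu_1$ and $\delta_{\eta_2}$ into $\nu_2$ (each such refinement being a finite sequence of single-atom rank-one splittings, since $\nu_i$ has finite order). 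Carrying the distinctness hypotheses of clause~(ii) honestly through these steps is the only genuinely delicate bookkeeping; everything else is routine. I would also remark that this argument is the verbatim analogue, in the manifold $\Sigma_t$, of the corresponding step in \cite[Section 3]{MSv1}, the only change being that all rank-one segments used are required to lie in $\OO\subset\Sigma_t$.
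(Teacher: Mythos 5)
Your proposal is correct and follows essentially the same route as the paper: the paper also defines $\tilde f(\xi)=\inf\{\langle\nu,f\rangle : \nu\in\mathfrak L(\OO),\ \bar\nu=\xi\}$, proves the easy inequality and finiteness exactly as you do (via $g_0$ and $\delta_\xi$), and then shows $\tilde f\le f$ and that $\tilde f$ is rank-one convex by combining two finite-order laminates with barycenters at the endpoints of a rank-one segment, concluding $\tilde f\le R_\OO f$ from the definition of the envelope. The only difference is one of emphasis: the paper disposes of the verification that $s\mu+(1-s)\nu\in\mathfrak L(\OO)$ with the remark that it is ``easy to see (by double induction)'', whereas you make explicit the splitting-lemma bookkeeping (distinctness of atoms, merging) that this induction must carry, which is a fair and harmless elaboration of the same argument.
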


\begin{proof}
Let $\tilde f:\OO\to\R\cup\{-\infty\}$ be the function given by
\[
\tilde f(\xi)=\inf\big\{\langle\nu,f\rangle\,|\, \nu\in\mathfrak{L}(\OO),\;\bar\nu=\xi\big\}\quad\forall \xi\in\OO.
\]
We have to show that $-\infty<\tilde f=R_{\mathcal{O}}f$ in $\mathcal O$.

Let $g:\OO\to\R$ be any rank-one convex function with $g\le f$ in $\OO$. Fix any $\xi\in\OO$, and let $\nu\in\mathfrak L(\OO)$ and $\bar\nu=\xi$. Then $\langle \nu, f\rangle\ge\langle\nu,g\rangle\ge g(\bar\nu)=g(\xi)$. Taking supremum on such $g$'s and infimum on such $\nu$'s, we have $-\infty<R_\OO f(\xi)\le \tilde f(\xi)$ from the existence of the function $g_0$. In particular, $\tilde f$ is real-valued in $\OO$.

Let us now check that $\tilde f:\OO\to\R$ is rank-one convex and that $\tilde f\le f$ in $\OO$. Once these are done, it follows from the definition of  $R_\OO f$ that $\tilde f\le R_\OO f$ in $\OO$; thus $R_\OO f=\tilde f$ in $\OO$, and the proof is complete.

We now turn to the remaining assertions.
Let $\xi\in\OO$. As $\delta_\xi\in\mathfrak{L}(\OO)$ and $\bar\delta_\xi=\xi$, the definition of $\tilde f$ implies $\tilde f(\xi)\le\langle \delta_\xi,f\rangle=f(\xi)$; thus, $\tilde f\le f$ in $\OO$. It remains to show that $\tilde f:\OO\to\R$ is rank-one convex. Let $[\xi_1,\xi_2]\subset\OO$ be any rank-one segment. Let $0< s< 1$ and $\xi=s\xi_1+(1-s)\xi_2\in\OO$. Let $\mu,\nu\in\mathcal{L}(\OO)$ be such that $\bar\mu=\xi_1$, $\bar\nu=\xi_2$. As $\rank(\xi_1-\xi_2)=1$, it is easy to see (by double induction) that $s\mu+(1-s)\nu\in\mathfrak L(\OO)$ and $\overline{s\mu+(1-s)\nu}=\xi$. So, by the definition of $\tilde f$, we get $\tilde f(\xi)\le \langle s\mu+(1-s)\nu,f \rangle=s\langle \mu,f \rangle+(1-s)\langle\nu,f\rangle$. Taking infimum on such $\mu$'s and $\nu$'s, we get $\tilde f(s\xi_1+(1-s)\xi_2)=\tilde f(\xi)\le s\tilde f(\xi_1)+(1-s)\tilde f(\xi_2).$ Thus, $\tilde f:\OO\to\R$ is rank-one convex.
\end{proof}

The other lemma for the proof of Theorem \ref{thm:main-lemma} is stated as follows, which may not be so simple to verify.

\begin{lem}\label{lem:main-approximation}
Let $K$ be a compact subset of $\Sigma_t$, and let $\OO$ be an open set in $\Sigma_t$ containing $\tilde K:=K^{rc,\Sigma_t}$. Let $f:\OO\to\R$ be a rank-one convex function. Then for each $\epsilon>0$, there exists a rank-one convex function $F:\M^{m\times n}\to\R$ such that $|F-f|<\epsilon$ on $\tilde K$.
\end{lem}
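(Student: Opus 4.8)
The plan is to extend $f$ from $\OO \subset \Sigma_t$ to a rank-one convex function on all of $\M^{m \times n}$, at least approximately on the compact piece $\tilde K$. Since $\Sigma_t = \{\xi : \LL(\xi) = t\}$ is an affine hyperplane in $\M^{m \times n}$, a natural first move is to write $\M^{m \times n} = \Sigma_0 \oplus \R L_0$ where $L_0$ is a suitable matrix with $\LL(L_0) \ne 0$ (e.g. $L_0 = L/|L|^2$ so that $\LL(L_0)=1$), and parametrize $\xi = \zeta + r L_0$ with $\zeta \in \Sigma_t$, $r \in \R$. The first attempt would be to define $\tilde F(\zeta + r L_0) = f(\zeta) + C r^2$ for a large constant $C$. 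The issue is that $f$ is only defined on the open set $\OO$, not all of $\Sigma_t$; so one first replaces $f$ by something defined and rank-one convex on a neighborhood of $\tilde K$ in $\Sigma_t$ that agrees with $f$ near $\tilde K$. Since $\tilde K = K^{rc,\Sigma_t}$ is compact (Proposition \ref{prop:rank-one-1}) and contained in the open set $\OO$, we can pick an intermediate open set $\OO'$ with $\tilde K \subset \OO' \Subset \OO$ and work with $f$ on $\bar{\OO'}$, where it is bounded with bounded "rank-one second differences."

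The heart of the matter is the choice of the quadratic penalty $C r^2$ in the transverse direction. For a rank-one line $s \mapsto \xi_0 + s\, a \otimes b$ in $\M^{m \times n}$, decompose $a \otimes b = (a\otimes b)_\Sigma + \LL(a \otimes b)\, L_0$, where $(a \otimes b)_\Sigma \in \Sigma_0$. Restricted to this line, $\tilde F$ becomes $s \mapsto f(\zeta_0 + s\,(a\otimes b)_\Sigma) + C(r_0 + s\,\LL(a\otimes b))^2$, where the first term, as a function of $s$ along a (rank-one!) segment in $\Sigma_t$, is convex wherever it is defined — but its convexity is "weak" and could be spoiled by the fact that $f$'s domain is only a neighborhood of $\tilde K$. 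The point of adding $C(r_0 + s\,\LL(a\otimes b))^2$ is: when $\LL(a\otimes b) \ne 0$, this term is \emph{strictly} convex in $s$ with a quantitatively controlled second derivative $2C\LL(a\otimes b)^2$, which can absorb any Lipschitz-type failure of convexity of the $f$-part once we smooth or mollify; when $\LL(a\otimes b) = 0$, the line stays parallel to $\Sigma_t$, and then $(a \otimes b)_\Sigma = a \otimes b$ is itself rank-one, so the $f$-part is genuinely convex in $s$ by hypothesis and the penalty term contributes a constant. So the recipe is: first mollify/regularize $f$ on a slightly smaller neighborhood so that it becomes $C^2$ and differs from the original $f$ by less than $\epsilon/2$ on $\tilde K$ — using Lemma \ref{lem:rank-one-envelope} or a direct convolution argument adapted to the flat manifold $\Sigma_t$ to preserve rank-one convexity of the regularization — then add $C r^2$ with $C$ large enough (depending on the $C^2$ bounds of the regularized $f$ on $\bar{\OO'}$ and on a positive lower bound for $\LL(a \otimes b)^2 / |a \otimes b|^2$ over the relevant compact set of directions) to obtain genuine rank-one convexity on a neighborhood, and finally truncate/extend outside that neighborhood by taking a supremum with a large affine (hence rank-one convex) function to get $F$ defined on all of $\M^{m \times n}$ without destroying rank-one convexity and without changing values on $\tilde K$.

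I expect the main obstacle to be the regularization step combined with the direction-uniformity estimate. For the penalty argument to work one needs a uniform positive lower bound on $\LL(a\otimes b)^2$ over all rank-one directions $a \otimes b$ with $|a \otimes b| = 1$ that are \emph{not} tangent to $\Sigma_t$ — but no such uniform bound exists, since $\LL(a \otimes b)$ can be arbitrarily small while nonzero. The resolution, and the delicate point, is that one does not need a uniform bound: along a rank-one line where $\LL(a \otimes b)$ is tiny, the line is \emph{nearly} tangent to $\Sigma_t$, so the relevant segment of it that lies in a fixed neighborhood of $\tilde K$ has the projected direction $(a\otimes b)_\Sigma$ close to $a \otimes b$, and one can play off the genuine (if weak) convexity of $f$ along near-tangent rank-one directions against the small but nonzero quadratic gain — this requires a careful two-parameter estimate trading the size of $\LL(a\otimes b)$ against the length of the admissible segment, which is presumably where the "may not be so simple to verify" warning comes from. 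An alternative, cleaner route that sidesteps the direction issue: use Lemma \ref{lem:rank-one-envelope} to write, for a continuous extension $\hat f$ of $f$ to a neighborhood in $\M^{m \times n}$, that the rank-one convex envelope $R_{\M^{m\times n}}(\hat f + C|\xi - P_{\Sigma_t}\xi|^2)$ — computed via laminates — already agrees with $f$ on $\tilde K$ for $C$ large, since any laminate supported near $\tilde K$ with barycenter in $\tilde K$ is forced by the penalty to concentrate on $\Sigma_t$, where it becomes a laminate \emph{relative to} $\Sigma_t$ and hence, by $\tilde K = K^{rc,\Sigma_t}$ and rank-one convexity of $f$, cannot lower the value below $f$. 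I would pursue whichever of these the earlier machinery makes most economical, but either way the crux is controlling laminates/rank-one lines that are nearly tangent to $\Sigma_t$.
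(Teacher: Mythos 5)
Your overall strategy is the paper's: extend $f$ off $\OO$, mollify, compose with the projection $\pi$ onto $\Sigma_t$, add a transverse quadratic penalty $k|\LL(\xi)-t|^2$, and then extend/truncate to all of $\M^{m\times n}$. But there is a genuine gap exactly at the point you flag, and your proposed resolution does not close it. After mollification your function is $C^2$ on an open neighborhood, so rank-one convexity there is equivalent to the pointwise Legendre--Hadamard inequality; the ``length of the admissible segment'' is irrelevant, so no two-parameter trade-off of that kind can help. Quantitatively, along a unit rank-one direction $e=a\otimes b$ with $\LL(e)=\theta$ small, one has $D^2(\tilde f\circ\pi)(\xi)(e,e)=D^2\tilde f(\pi(\xi))(\pi_0 e,\pi_0 e)$ with $\pi_0 e$ at distance $O(\theta)$ (or worse) from the tangent rank-one cone; rank-one convexity of $\tilde f$ on $\Sigma_t$ only forces the Hessian form to be $\ge 0$ \emph{on} that cone, and it may vanish at the nearby tangent direction while having a nonzero cross term with the transverse displacement, so the second derivative can be as negative as $-c\,\theta$ (linear in $\theta$). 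Your penalty gains only $2C\theta^2$, so for every fixed $C$ the Legendre--Hadamard inequality fails for sufficiently small nonzero $\theta$: choosing ``$C$ large depending on $C^2$ bounds and a lower bound for $\LL(a\otimes b)^2$'' cannot work, and the near-tangent regime is not rescued by the weak convexity of $f$ alone.

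The paper's fix (Lemma \ref{lem:approx-linear-whole}) is to add a \emph{uniformly strictly} rank-one convex term: it uses $\tilde g(\pi(\xi))+\tfrac{\epsilon}{2R^2}|\xi|^2+k|\LL(\xi)-t|^2$, so every unit rank-one direction gets a fixed margin $2\cdot\tfrac{\epsilon}{2R^2}$, and a compactness/contradiction argument shows some finite $k$ yields rank-one convexity on a neighborhood of the compact set: a failing sequence of points and directions as $k\to\infty$ converges to a \emph{tangent} rank-one direction along which the limit inequality $D^2(\tilde g\circ\pi)\le -2\epsilon'$ contradicts convexity of $\tilde g$ on $\Sigma_t$. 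This extra term is precisely why the lemma asserts only $|F-f|<\epsilon$ on $\tilde K$ rather than equality, and it is the ingredient your sketch omits. Two further, smaller points: before mollifying, the paper first extends $f$ to a rank-one convex function on all of $\Sigma_t$ agreeing with $f$ near $\tilde K$ (Lemma \ref{lem:rank-1-extension}, built on Lemmas \ref{lem:zero-level-set} and \ref{lem:rank-1-property}), rather than just restricting to $\bar{\OO}'$; and your final step of ``taking a supremum with a large affine function'' does not by itself produce a function defined and rank-one convex on all of $\M^{m\times n}$ --- the paper again invokes the unconstrained version of Lemma \ref{lem:rank-1-extension}, using that $\bar B\cap\Sigma_t$ is convex, hence equal to its rank-one convex hull, to pass from the neighborhood $U_{\epsilon,k}$ to a global $F$ with $F=G$ on $\bar B\cap\Sigma_t$. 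Your alternative envelope/laminate route is not developed enough to assess, but it would face the same near-tangent concentration issue in quantitative form.
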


To prove this lemma, we need some auxiliary results. We begin with a simple observation on rank-one convex functions.

\begin{lem}\label{lem:rank-1-property}
Let $U,\,V$ be open sets in $\Sigma_t$ with $\bar V\subset U$, and let $f_1:\Sigma_t\to\R$ and $f_2:U\to\R$ be rank-one convex functions such that $f_2\ge f_1$ in $V$ and that $f_1=f_2$ on $\partial|_{\Sigma_t} V$. Let $f:\Sigma_t\to\R$ be the function given by
\[
f(\xi)=\left\{\begin{array}{ll}
                f_2(\xi), & \xi\in V, \\
                f_1(\xi), & \xi\in \Sigma_t\setminus V.
              \end{array}
 \right.
 \]
Then $f:\Sigma_t\to\R$ is rank-one convex. Moreover, the same result holds when $\Sigma_t$ is replaced by $\M^{m\times n}$.
\end{lem}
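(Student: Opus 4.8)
The plan is to reduce the statement to the classical one-dimensional fact that a function which is convex on an interval, convex on a neighborhood of one endpoint of a subinterval, and agrees with the ``outer'' convex function at both endpoints of that subinterval, is convex on the whole interval. Concretely, fix a rank-one segment $[\xi_1,\xi_2]\subset\Sigma_t$ with $\xi_2-\xi_1=a\otimes b$ for some $a\in\R^m$, $b\in\R^n$, and consider the function $\varphi(s)=f(\xi_1+s(\xi_2-\xi_1))$ for $s\in[0,1]$. I must show $\varphi$ is convex. The set $W:=\{s\in(0,1): \xi_1+s(\xi_2-\xi_1)\in V\}$ is relatively open in $(0,1)$, hence a countable disjoint union of open intervals. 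On each component interval $I=(\alpha,\beta)$ of $W$ the function $\varphi$ equals $s\mapsto f_2(\xi_1+s(\xi_2-\xi_1))$, which is convex because $f_2$ is rank-one convex and the segment lies in the convex open set $U$ (note $[\alpha,\beta]$ maps into $\bar V\subset U$); elsewhere $\varphi$ equals $s\mapsto f_1(\xi_1+s(\xi_2-\xi_1))$, which is convex because $f_1$ is rank-one convex on all of $\Sigma_t$.

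The key step is to patch these pieces together. I would first record that at each endpoint $\alpha$ (resp.\ $\beta$) of a component of $W$ lying in the interior $(0,1)$, the point $\xi_1+\alpha(\xi_2-\xi_1)$ lies in $\partial|_{\Sigma_t}V$ (it is a boundary point of $V$ along the segment, hence in the relative boundary), so the hypothesis $f_1=f_2$ there gives $\varphi(\alpha^-)=\varphi(\alpha^+)$ — i.e.\ $\varphi$ is continuous across the junction — and likewise at $\beta$. Second, I claim $\varphi$ is convex by a standard local criterion: a continuous function on $(0,1)$ is convex iff it is convex in a neighborhood of each point and lies below its chords, but more efficiently, I will use that $\varphi$ is continuous on $[0,1]$, is convex on the closed set $[0,1]\setminus W$ (being a restriction of the convex $f_1$-slice to that set, but care is needed since the set is not an interval) and on the closure of each component of $W$. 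The clean way: show the right derivative $\varphi'_+$ is nondecreasing on $(0,1)$. On the interior of $[0,1]\setminus W$ and on each component of $W$, $\varphi'_+$ is nondecreasing since $\varphi$ is convex there. At a junction point $\alpha$ where $V$ is entered (so $(\alpha-\delta,\alpha)\subset$ complement, $(\alpha,\alpha+\delta)\subset W$), the inequality $f_2\ge f_1$ on $V$ together with $f_1(\xi_1+\alpha(\xi_2-\xi_1))=f_2(\xi_1+\alpha(\xi_2-\xi_1))$ forces $\varphi'_+(\alpha^+)\ge \varphi'_+(\alpha^-)$; at a junction where $V$ is exited, $f_2\ge f_1$ with equality at the boundary point forces the slope of the $f_1$-branch to dominate, again giving monotonicity of $\varphi'_+$. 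Hence $\varphi$ is convex on $[0,1]$, proving $f$ is rank-one convex.

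The main obstacle I anticipate is the book-keeping at the junction points when $W$ has infinitely many (possibly accumulating) components, and in particular justifying the one-sided derivative comparisons $\varphi'_+(\alpha^+)\ge\varphi'_+(\alpha^-)$ purely from ``$f_2\ge f_1$ near $\alpha$, $f_2=f_1$ at $\alpha$''; this is exactly the elementary fact that if two convex functions on a line agree at a point and one dominates the other on one side, their one-sided slopes are ordered accordingly, but one must phrase it so that it also covers the edge cases $\alpha=0$ or $\beta=1$ (where only one branch is present) and the case where a component of $W$ has an endpoint that is itself a limit of other components. For the final sentence of the lemma — that the same holds with $\Sigma_t$ replaced by $\M^{m\times n}$ — I would simply remark that the entire argument above only used that $\Sigma_t$ is an affine subspace closed under adding rank-one matrices $a\otimes b$ and that rank-one segments are parametrized by intervals; $\M^{m\times n}$ has the same properties, so the identical proof applies verbatim.
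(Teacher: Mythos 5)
Your reduction to one dimension along a rank-one segment is exactly the route the paper takes (it isolates a one-dimensional gluing statement, Lemma~\ref{lem:rank-1-property-1D}, and composes with $H(s)=s\xi_1+(1-s)\xi_2$), and your observation that the endpoints of the components of $W$ are mapped by $H$ into $\partial|_{\Sigma_t}V$, so that $f_1=f_2$ there, is correct. The genuine gap is in the patching step. Your monotonicity argument for $\varphi'_+$ is carried out only at ``clean'' junctions, where an endpoint $\alpha$ of a component of $W$ has a one-sided neighborhood contained in $[0,1]\setminus W$. But $(0,1)$ is not exhausted by the components of $W$, the interior of the complement, and such clean junctions: when the components accumulate (think of $W$ as the union of the intervals removed in a Cantor construction), there are points of $[0,1]\setminus W$ --- including endpoints of components --- every one-sided neighborhood of which meets infinitely many components. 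At such a point you have established neither the continuity of $\varphi$ (your continuity claim uses only the equality $f_1=f_2$ at the single adjacent junction; at an accumulation point one needs a squeezing argument, $f_1\circ H\le\varphi\le$ chords of $f_1\circ H$ over the nearby components), nor the existence of $\varphi'_+$ (the difference quotients alternate between the $f_1$-slice and the $f_2$-pieces), nor the slope comparison. Since the criterion ``continuous with everywhere-defined, nondecreasing right derivative implies convex'' needs all three ingredients at every point of $(0,1)$, the argument as written does not close; you flag this case yourself, but it is the actual content of the lemma rather than bookkeeping. (A small slip besides: $U$ is not assumed convex; what you need, and do note parenthetically, is only $H([\alpha,\beta])\subset\bar V\subset U$, so that the paper's definition of rank-one convexity of $f_2$ on $U$ applies to that closed rank-one subsegment.)

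The paper sidesteps all of this by avoiding derivatives and continuity entirely: in the one-dimensional lemma it verifies the three-point inequality $f(\bar x)\le\lambda f(x_0)+(1-\lambda)f(x_1)$ directly, distinguishing whether $\bar x$ lies in a component $I_{j_0}=(y_0,y_1)$ of $W$ and whether $x_0,x_1$ lie in that same component, and chaining the convexity of $f_2$ on $\bar I_{j_0}$ through the endpoint values $f_2(y_i)=f_1(y_i)$ with the convexity of the $f_1$-slice and the global inequality $f\ge f_1$. Only the single component containing $\bar x$ ever enters, so accumulating components never need to be discussed. To repair your proof you would either have to establish continuity, existence of $\varphi'_+$, and its monotonicity at the accumulation points, or switch to the direct chord inequality --- the latter is the simpler fix and is essentially the paper's argument.
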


As a precursor of this lemma, we first deal with its one-dimensional version.

\begin{lem}\label{lem:rank-1-property-1D}
Let $\{I_j\}_{j\in J}$ be a countable collection of disjoint  open intervals in $(0,1)$. Let $f_1:[0,1]\to\R$ be a convex function, and let $f_2:\cup_{j\in J}\bar{I}_j\to\R$ be a function such that it is convex on each interval $\bar{I}_j$, $f_2\ge f_1$ in $\cup_{j\in J}I_j$, and $f_1=f_2$ on $\cup_{j\in J}\partial I_j$. Define
\[
f(x)=\left\{\begin{array}{ll}
                f_2(x), & x\in \cup_{j\in J}I_j, \\
                f_1(x), & x\in [0,1]\setminus \cup_{j\in J}I_j.
              \end{array}
 \right.
 \]
Then $f:[0,1]\to\R$ is convex.
\end{lem}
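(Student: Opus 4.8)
The plan is to verify convexity of $f$ directly from the definition: given $x,y\in[0,1]$ with $x<y$ and $\lambda\in(0,1)$, set $z=\lambda x+(1-\lambda)y$ and show $f(z)\le\lambda f(x)+(1-\lambda)f(y)$. Since $f=f_1$ except on $\cup_j I_j$ and $f\ge f_1$ everywhere (because $f_2\ge f_1$ on each $I_j$ and $f_1=f_2$ on the endpoints), and since $f_1$ is convex so that $f_1(z)\le\lambda f_1(x)+(1-\lambda)f_1(y)\le\lambda f(x)+(1-\lambda)f(y)$, the inequality is immediate \emph{whenever $f(z)=f_1(z)$}, i.e.\ whenever $z\notin\cup_j I_j$. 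So the only case that needs work is $z\in I_{j_0}$ for some $j_0\in J$; write $I_{j_0}=(a,b)$ with $0\le a<z<b\le 1$.

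The key step is the following reduction. First I would handle the subcase $x,y\in\bar I_{j_0}$: then by convexity of $f_2$ on $\bar I_{j_0}$ and $f=f_2$ on $\bar I_{j_0}$ we get $f(z)=f_2(z)\le\lambda f_2(x)+(1-\lambda)f_2(y)=\lambda f(x)+(1-\lambda)f(y)$, using $f(a)=f_2(a)$, $f(b)=f_2(b)$ to cover the endpoints. For the general subcase, the idea is to replace $x$ and $y$ by the points where the segment from $(x,f(x))$ to $(y,f(y))$ meets the vertical lines over $a$ and $b$, and compare. Concretely, suppose $x\le a$ (the case $x>a$ forces $x\in I_{j_0}$, handled symmetrically). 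Let $x'=a$ if $x<a$, and $x'=x$ otherwise; similarly $y'=b$ if $y>b$, else $y'=y$. Since $f_1$ is convex and $f(a)=f_1(a)$ while $f(x)\ge f_1(x)$ with equality when $x\notin\cup_j I_j$ — here one must be slightly careful: $x$ itself may lie in some other interval $I_{j_1}$. To sidestep this, note it suffices to prove the inequality for $x,y$ in a dense subset (convexity is a closed condition and $f$ is continuous — $f_1$ is continuous on $[0,1]$, $f_2$ is continuous on each $\bar I_j$, and they agree at interface points, so $f$ is continuous). Thus I may assume $x,y\notin\cup_j I_j$, whence $f(x)=f_1(x)$, $f(y)=f_1(y)$.

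With that simplification, the endgame is clean: with $x\le a<b\le y$ and $x,y\notin\cup_j I_j$, convexity of $f_1$ on $[x,y]$ gives $f_1(a)\le\mu f_1(x)+(1-\mu)f_1(y)$ and $f_1(b)\le\nu f_1(x)+(1-\nu)f_1(y)$ for the appropriate barycentric coefficients, i.e.\ the chord of $f_1$ over $[x,y]$ lies above $(a,f_1(a))$ and $(b,f_1(b))$; equivalently the chord of $f$ over $[x,y]$ lies above $(a,f(a))$ and $(b,f(b))$. Since $z\in(a,b)$, writing $z=\theta a+(1-\theta)b$, the subcase already proved (with endpoints $a,b\in\bar I_{j_0}$) gives $f(z)\le\theta f(a)+(1-\theta)f(b)$, and then $\theta f(a)+(1-\theta)f(b)$ is at most the value at $z$ of the chord of $f$ over $[x,y]$, which is $\lambda f(x)+(1-\lambda)f(y)$. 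Combining, $f(z)\le\lambda f(x)+(1-\lambda)f(y)$, as desired. The main obstacle is purely organizational — keeping track of which of $x,y,z$ land inside which intervals and invoking continuity/density to avoid a proliferation of cases; there is no analytic difficulty once one notices that $f\ge f_1$ globally and $f=f_2$ is convex on each $\bar I_j$ with matching boundary values.
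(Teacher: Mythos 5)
Your overall strategy is sound and is essentially the paper's: reduce to the case $z\in I_{j_0}=(a,b)$, use convexity of $f_2$ on $\bar I_{j_0}$ with the matching boundary values $f(a)=f_1(a)$, $f(b)=f_1(b)$, and then compare with the chord of $f_1$ over $[x,y]$, using $f\ge f_1$. However, the reduction ``I may assume $x,y\notin\cup_j I_j$'' is a genuine flaw as written. First, $[0,1]\setminus\cup_{j\in J}I_j$ need not be dense: take $I_1=(0,1/2)$, $I_2=(1/2,1)$, whose complement is $\{0,1/2,1\}$, or more generally the union of the $I_j$ may be co-null and co-nowhere-dense in nothing — the hypothesis places no restriction preventing $\cup_j I_j$ from being ``most'' of $(0,1)$. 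Second, the continuity claim you lean on is itself shaky at the edges: a convex function on a closed interval can jump at its endpoints (e.g.\ $f_1(0)=1$, $f_1\equiv 0$ on $(0,1]$), so $f$ need not be continuous at $0$ and $1$; and continuity of $f$ at a point where infinitely many $I_j$ accumulate does not follow merely from ``continuous pieces agreeing at interface points'' — it requires the chord bound $f_1\le f_2\le\text{(chord of $f_1$ over $\bar I_j$)}$, which you never state. So the density/continuity detour both fails in general and rests on an unproved continuity assertion.

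Fortunately the detour is unnecessary, and removing it leaves a correct proof that matches the paper's. Your own endgame only needs: (i) $f(z)=f_2(z)\le\theta f_2(a)+(1-\theta)f_2(b)=\theta f_1(a)+(1-\theta)f_1(b)$ (convexity of $f_2$ on $\bar I_{j_0}$ plus boundary matching — note $a,b\notin\cup_j I_j$ by disjointness of the intervals, so indeed $f(a)=f_1(a)$, $f(b)=f_1(b)$); (ii) $f_1(a)$ and $f_1(b)$ lie below the chord of $f_1$ over $[x,y]$; and (iii) $\lambda f_1(x)+(1-\lambda)f_1(y)\le\lambda f(x)+(1-\lambda)f(y)$, which uses only $f\ge f_1$ — an inequality you already established and used in the easy case — never $f(x)=f_1(x)$. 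Replacing equality by this inequality makes the argument complete for arbitrary $x,y$, with no appeal to density or continuity. With that repair your proof is in substance the same as the paper's, which carries out exactly this two-step chord comparison via explicit barycentric coefficients (the $\mu\nu=\lambda$ computation) in one representative configuration of $x_0,x_1$ relative to $\bar I_{j_0}$ and omits the symmetric cases, just as you do.
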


\begin{proof}
Let $x_0<x_1$ be any two numbers in $[0,1]$, and let $\bar x=\lambda x_0+(1-\lambda)x_1$, where $0<\lambda<1$ is any fixed number. We have to show that
\begin{equation}\label{convex-inqeuality}
f(\bar x)\le \lambda f(x_0)+(1-\lambda)f(x_1).
\end{equation}

If $\bar x\in[0,1]\setminus\cup_{j\in J}I_j$, then
\[
f(\bar x)=f_1(\bar x)\le \lambda f_1(x_0)+(1-\lambda)f_1(x_1)\le \lambda f(x_0)+(1-\lambda)f(x_1),
\]
from the definition of $f$.

Next, assume  $\bar x\in\cup_{j\in J} I_j$. Then there is a unique index $j_0\in J$ such that $\bar x\in I_{j_0}$ with $y_0$ and $y_1$ denoting the left- and right-end points of $I_{j_0}$, respectively. If $x_0\ge y_0$ and $x_1\le y_1$, then
\[
f(\bar x)=f_2(\bar x)\le \lambda f_2(x_0)+(1-\lambda) f_2(x_1)=\lambda f(x_0)+(1-\lambda) f(x_1).
\]
Suppose $x_0<y_0$ and $x_1\le y_1$. As $\bar x\in I_{j_0}$, we have $\bar{x}=\mu y_0+(1-\mu) x_1$ and $y_0=\nu x_0+(1-\nu) x_1$ for some $0<\mu,\nu<1$. So
\[
\lambda x_0+(1-\lambda) x_1=\bar x=\mu\nu x_0+(1-\mu\nu)x_1;
\]
hence $\mu\nu=\lambda$, and we get
\[
\begin{split}
f(\bar x) & =f_2(\bar x)=f_2(\mu y_0+(1-\mu)x_1)\le \mu f_2(y_0)+(1-\mu) f_2(x_1) \\
& =\mu f_1(y_0)+(1-\mu) f_2(x_1)=\mu f_1(\nu x_0+(1-\nu)x_1)+(1-\mu) f_2(x_1) \\
& \le\mu\nu f_1(x_0)+\mu(1-\nu)f_1(x_1)+(1-\mu) f_2(x_1) \\
& \le \lambda f_1(x_0)+(1-\lambda) f_2(x_1)\le \lambda f(x_0)+(1-\lambda) f(x_1).
\end{split}
\]
The other cases can be handled similarly; we omit these.

In any case, inequality (\ref{convex-inqeuality}) holds, and the proof is complete.
\end{proof}

We now finish the proof of Lemma \ref{lem:rank-1-property}.

\begin{proof}[Proof of Lemma \ref{lem:rank-1-property}]
Let $[\xi_1,\xi_2]$ be any rank-one  segment in $\Sigma_t$; that is, $\xi_1,\xi_2\in\Sigma_t$ and $\mathrm{rank}(\xi_1-\xi_2)=1$. It suffices to show that the function $s\mapsto f(s\xi_1+(1-s)\xi_2)$ $(s\in[0,1])$ is convex.

If $[\xi_1,\xi_2]\subset V$ or $[\xi_1,\xi_2]\subset\Sigma_t\setminus V$, there is nothing to show. So we assume both inclusions do not hold; that is,
\[
[\xi_1,\xi_2]\cap V\not=\emptyset\quad\mbox{and}\quad [\xi_1,\xi_2]\cap (\Sigma_t\setminus V)\not=\emptyset.
\]
Consider the 1-1 map $s\mapsto s\xi_1+(1-s)\xi_2=:H(s)$ from $[0,1]$ onto $[\xi_1,\xi_2]$. Let $\{I_j\}_{j\in J}$ be the countable collection of disjoint open intervals in $(0,1)$ such that $H(\cup_{j\in J}I_j)=(\xi_1,\xi_2)\cap V$. We can now apply Lemma \ref{lem:rank-1-property-1D} to the function $f\circ H:[0,1]\to\R$ to conclude that it is convex. 

One can repeat the same proof for the unconstrained case by replacing $\Sigma_t$ with $\M^{m\times n}$.
\end{proof}

The following lemma is on the identification of the rank-one convex hull of a compact set as the zero level set of some nonnegative rank-one convex function.

\begin{lem}\label{lem:zero-level-set}
Let $K$ be a compact subset of $\Sigma_t$, and let $\tilde K=K^{rc,\Sigma_t}.$ Then there exists a nonnegative rank-one convex function $g:\Sigma_t\to\R$ such that
\[
\tilde K=\{\xi\in\Sigma_t\,|\, g(\xi)=0\}.
\]
The same result remains true when $\Sigma_t$ and $K^{rc,\Sigma_t}$ are replaced by $\M^{m\times n}$ and $K^{rc},$ respectively.
\end{lem}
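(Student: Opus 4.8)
The plan is to realize $\tilde K$ as the zero set of a single nonnegative rank-one convex function, built as a suitably weighted countable sum of ``separating'' rank-one convex functions, one for a neighborhood of each point of $\Sigma_t\setminus\tilde K$. Concretely, for each $\xi\in\Sigma_t\setminus\tilde K$ I would first use the defining property of $\tilde K=K^{rc,\Sigma_t}$ to produce a rank-one convex function $h_\xi:\Sigma_t\to\R$ with $h_\xi\le 0$ on $K$ and $h_\xi(\xi)>0$. Replacing $h_\xi$ by $\max\{h_\xi,0\}$ — which is again rank-one convex on $\Sigma_t$, since along any rank-one segment (all of which lie in the affine space $\Sigma_t$) the maximum of a convex function and the constant $0$ is convex — one may assume $h_\xi\ge 0$ on $\Sigma_t$ and $h_\xi=0$ on $K$. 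Rank-one convex functions are separately convex, hence locally Lipschitz, in particular continuous and locally bounded; so $N_\xi:=\{h_\xi>0\}$ is an open neighborhood of $\xi$ in $\Sigma_t$.

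Since $\tilde K$ is compact by Proposition \ref{prop:rank-one-1}, the set $\Sigma_t\setminus\tilde K$ is open in the finite-dimensional affine space $\Sigma_t$ and hence Lindel\"of, so I would extract a countable subcover $\Sigma_t\setminus\tilde K=\bigcup_{i\in\N}N_{\xi_i}$ and abbreviate $h_i:=h_{\xi_i}$. Fixing an exhaustion of $\Sigma_t$ by compact balls $B_1\subset B_2\subset\cdots$ and setting $M_i:=1+\sup_{B_i}h_i<\infty$, I would define
\[
g:=\sum_{i\in\N}2^{-i}M_i^{-1}h_i.
\]
On each $B_k$ the tail $\sum_{i>k}2^{-i}M_i^{-1}h_i$ is uniformly bounded by $\sum_{i>k}2^{-i}$ (using $B_k\subset B_i$ and $0\le h_i\le M_i$ on $B_k$ for $i\ge k$) while the head is a finite sum of bounded continuous functions, so the series converges uniformly on $B_k$; hence $g$ is real-valued and continuous on $\Sigma_t$. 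Each partial sum is a positive combination of rank-one convex functions, hence rank-one convex, and the partial sums converge pointwise to $g$; since a finite-valued pointwise limit of convex functions of one real variable is convex, $g$ is rank-one convex. By construction $g\ge 0$ on $\Sigma_t$ and $g=0$ on $K$.

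It remains to identify the zero set. If $\xi\in\Sigma_t\setminus\tilde K$, then $\xi\in N_{\xi_i}$ for some $i$, so $g(\xi)\ge 2^{-i}M_i^{-1}h_i(\xi)>0$; thus $\{g=0\}\subset\tilde K$. Conversely, $g$ is rank-one convex on $\Sigma_t$ with $g\le 0$ on $K$, so the very definition of $K^{rc,\Sigma_t}$ forces every $\xi$ with $g(\xi)>0$ to lie outside $\tilde K$; since $g\ge 0$ this gives $\tilde K\subset\{g=0\}$. Hence $\tilde K=\{\xi\in\Sigma_t\,|\,g(\xi)=0\}$. The unconstrained statement follows by running the identical argument with $\M^{m\times n}$ in place of $\Sigma_t$ and the characterization ``$\xi\notin K^{rc}$ iff $f(\xi)>0$ for some rank-one convex $f:\M^{m\times n}\to\R$ with $f\le 0$ on $K$'' (recorded just before the definition of $K^{rc,\Sigma_t}$) in place of the corresponding property of $K^{rc,\Sigma_t}$, using again that $K^{rc}$ is compact.

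I do not expect a serious obstacle: the only points requiring care are the choice of weights $2^{-i}M_i^{-1}$ that keeps the infinite sum finite-valued (so that $g$ is genuinely continuous and rank-one convex, not merely lower semicontinuous), and the reduction of the uncountable cover $\{N_\xi\}_{\xi\in\Sigma_t\setminus\tilde K}$ to a countable one via the Lindel\"of property — both routine.
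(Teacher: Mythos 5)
Your proposal is correct, but it follows a genuinely different route from the paper. The paper works on a large ball $\Sigma_{t,R}\supset\tilde K$ and takes $g_1$ to be the rank-one convex envelope of $\dist(\cdot,K)$ there, proves $g_1>0$ off $\tilde K$ by a quantitative comparison argument (showing that $\sup_\eta\big(\tilde f(\eta)-\tfrac{\alpha}{2}\big)/\dist(\eta,K)$ is finite, so a rescaled separating function sits below $\dist(\cdot,K)$ and hence below $g_1$), and then extends $g_1$ to all of $\Sigma_t$ by gluing with the coercive function $12|\xi|-9R$ via Lemma \ref{lem:rank-1-property}; identifying the zero set requires a second such gluing. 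You instead build $g$ directly as a weighted countable sum of truncated separating functions $\max\{h_{\xi_i},0\}$, one for each member of a countable subcover of $\Sigma_t\setminus\tilde K$, using only the stability of rank-one convexity under maxima, nonnegative combinations and finite pointwise limits; this bypasses the envelope, the finiteness estimate and Lemma \ref{lem:rank-1-property} entirely, at the price of a less explicit (and non-coercive) $g$, which is all the statement requires, and the zero-set identification then reduces to the definition of $K^{rc,\Sigma_t}$ exactly as you say. The one step you should justify rather than assert is the continuity (local boundedness) of rank-one convex functions on $\Sigma_t$, which you need both for $N_\xi$ to be open and for $M_i<\infty$: on the affine hyperplane $\Sigma_t$ the ``separately convex, hence locally Lipschitz'' argument works only after noting that the rank-one matrices lying in $\ker\mathcal{L}$ span $\ker\mathcal{L}$ (true for every $L\neq 0$, and used implicitly by the paper as well, e.g.\ when it passes to the limit $\tilde f(\eta_j)\to\tilde f(\eta_0)$ in its own proof of this lemma), so a one-line remark closes this point.
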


\begin{proof}
For each $r>0$, set $\Sigma_{t,r}=\{\xi\in\Sigma_t\,|\, |\xi|<r\}.$ Choose an $R>0$ so large that $\tilde K\subset\Sigma_{t,R/2}$. Define a function $g_1:\Sigma_{t,R}\to\R$ by
\[
g_1(\xi)=\sup\big\{f(\xi)\,|\, \mbox{$f:\Sigma_{t,R}\to\R$ is rank-one convex, $f\le\mathrm{dist}(\cdot,K)$ in $\Sigma_{t,R}$}\big\}
\]
for all $\xi\in\Sigma_{t,R}$.
As the zero function $0\le\mathrm{dist}(\cdot,K)$ in $\Sigma_{t,R}$ is rank-one convex, we have $g_1\ge 0$ in $\Sigma_{t,R}$. It is also easy to see that $g_1:\Sigma_{t,R}\to\R$ is rank-one convex and that $K\subset\{\xi\in\Sigma_{t,R}\,|\,g_1(\xi)=0\}$. 

Let us check that $g_1>0$ in $\Sigma_{t,R}\setminus\tilde K$. To see this, let $\xi\in\Sigma_{t,R}\setminus\tilde K$. By definition, $\alpha:=\tilde f(\xi)>0$ for some rank-one convex function $\tilde f:\Sigma_{t}\to\R$ with $\tilde f\le 0$ on $K$. We now verify that
\begin{equation}\label{claim-M-finite}
0<M:=\sup_{\eta\in\Sigma_{t,R}\setminus K}\frac{\tilde f(\eta)-\frac{\alpha}{2}}{\mathrm{dist}(\eta,K)}<\infty.
\end{equation}
Note first that
\[
0<\frac{\alpha}{2\cdot\mathrm{dist}(\xi,K)}=\frac{\tilde f(\xi)-\frac{\alpha}{2}}{\mathrm{dist}(\xi,K)}\le M.
\]
It thus remains to show that $M<\infty.$ Suppose on the contrary that $M=\infty$. Then we can choose a sequence $\{\eta_j\}_{j\in\N}$ in $\Sigma_{t,R}\setminus K$ so that
\begin{equation}\label{seq-to-infty}
\frac{\tilde f(\eta_j)-\frac{\alpha}{2}}{\mathrm{dist}(\eta_j,K)}\to\infty \quad\mbox{as $j\to\infty$}.
\end{equation}
Passing to a subsequence if necessary, we can assume $\eta_j\to\eta_0$ for some $\eta_0\in\bar\Sigma_{t,R}.$ If $\eta_0\not\in K$, then
\begin{equation*}
\frac{\tilde f(\eta_j)-\frac{\alpha}{2}}{\mathrm{dist}(\eta_j,K)}\to \frac{\tilde f(\eta_0)-\frac{\alpha}{2}}{\mathrm{dist}(\eta_0,K)}\in\R \quad\mbox{as $j\to\infty$},
\end{equation*}
which is a contradiction to (\ref{seq-to-infty}). If $\eta_0\in K,$ then $\tilde f(\eta_0)\le 0$, and so
\begin{equation*}
\frac{\tilde f(\eta_j)-\frac{\alpha}{2}}{\mathrm{dist}(\eta_j,K)}\to -\infty \quad\mbox{as $j\to\infty$},
\end{equation*}
which is also a contradiction to (\ref{seq-to-infty}). Thus (\ref{claim-M-finite}) holds, and this implies that
\[
\frac{1}{M}(\tilde f(\eta)-\frac{\alpha}{2})\le \mathrm{dist}(\eta,K)\quad \forall \eta\in\Sigma_{t,R}.
\]
As $\frac{1}{M}(\tilde f-\frac{\alpha}{2}):\Sigma_{t,R}\to\R$ is rank-one convex, it now follows from the definition of $g_1$ that $\frac{1}{M}(\tilde f(\eta)-\frac{\alpha}{2})\le g_1(\eta)$ for all $\eta\in\Sigma_{t,R}$. In particular,
\[
0<\frac{\alpha}{2M}=\frac{1}{M}(\tilde f(\xi)-\frac{\alpha}{2})\le g_1(\xi).
\]
Therefore, $g_1>0$ in $\Sigma_{t,R}\setminus\tilde K$.

Let $g:\Sigma_t\to\R$ be the function defined by
\[
g(\xi)=\left\{\begin{array}{ll}
         \max\{g_1(\xi),12|\xi|-9R\}, & \xi\in\Sigma_{t,R}, \\
         12|\xi|-9R, & \xi\in\Sigma_t,\,|\xi|\ge R.
       \end{array}\right.
\]
Lastly, we check that $g$ is the desired function. Clearly, $g\ge 0$ in $\Sigma_t$. Since $g_1(\xi)\le\mathrm{dist}(\xi,K)<\frac{3R}{2}$ for all $\xi\in\Sigma_{t,R}$, we have $g(\xi)=12|\xi|-9R$ for all $\xi$ in some neighborhood of $\{\eta\in\Sigma_{t}\,|\,|\eta|=R\}$ in $\Sigma_{t}$. Thus it follows from Lemma \ref{lem:rank-1-property} that $g:\Sigma_{t}\to\R$ is rank-one convex. Next, we verify that
\[
\tilde K=\{\xi\in\Sigma_t\,|\,g(\xi)=0\}.
\]
As $\tilde K\subset\Sigma_{t,R/2}$, it follows from the above fact that $g_1(\xi)>0$ for all $\xi\in\Sigma_t$ with $R/2\le|\xi|<R$, and so $g(\xi)>0$ for all $\xi\in\Sigma_t$ with $|\xi|\ge R/2$. This implies that
\[
\{\xi\in\Sigma_t\,|\, g(\xi)=0\}=\{\xi\in\Sigma_t\,|\,|\xi|<R/2,\, g_1(\xi)=0\}\subset\tilde K.
\]
To show the reverse inclusion, let $\xi\in\tilde K$. Let $f:\Sigma_{t,R}\to\R$ be any rank-one convex function such that $f\le\mathrm{dist}(\cdot,K)$ in $\Sigma_{t,R}$. If we can show that $f(\xi)\le 0$, then the definition of $g_1$ implies that $g(\xi)=g_1(\xi)=0$, and the proof is complete for the case of the linear constraint. Suppose on the contrary that $f(\xi)>0$.
Define
\[
\tilde f(\eta)=\left\{\begin{array}{ll}
         \max\{f(\eta),12|\eta|-9R\}, & \eta\in\Sigma_{t,R}, \\
         12|\eta|-9R, & \eta\in\Sigma_t,\,|\eta|\ge R.
       \end{array}\right.
\]
Then $\tilde f:\Sigma_t\to\R$ is rank-one convex as above. Also, $\tilde f(\xi)= f(\xi)>0$. As $\tilde f\le 0$ on $K$, we now have $\xi\not\in \tilde K$; a contradiction. Thus $f(\xi)\le 0$.

For the unconstrained case, one can repeat the same proof with $\Sigma_t$, $\Sigma_{t,r}$ $(r>0)$ and $K^{rc,\Sigma_t}$ replaced by $\M^{m\times n}$, $B_r=\{\xi\in\M^{m\times n}\,|\,|\xi|<r\}$ $(r>0)$ and $K^{rc}$, respectively.
\end{proof}

Using the previous lemma, we obtain the following.

\begin{lem}\label{lem:rank-1-extension}
Let $K$ be a compact subset of $\Sigma_t$, let $\OO$ be an open set in $\Sigma_t$ containing $\tilde K:=K^{rc,\Sigma_t}$, and let $f:\OO\to\R$ be a rank-one convex function. Then there exists a rank-one convex function $F:\Sigma_t\to\R$ such that
\[
F\equiv f\quad\mbox{in some neighborhood of $\tilde K$ in $\OO$}.
\]
The same result holds when $\Sigma_t$ and $K^{rc,\Sigma_t}$ are replaced by $\M^{m\times n}$ and $K^{rc}$, respectively.
\end{lem}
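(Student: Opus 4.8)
The plan is to glue $f$ — which will be used on a small neighborhood of $\tilde K$ — to a large multiple of the function $g$ furnished by Lemma~\ref{lem:zero-level-set} — which will be used everywhere else — and then to invoke the gluing Lemma~\ref{lem:rank-1-property} to conclude that the result is rank-one convex on all of $\Sigma_t$. We may assume $\tilde K\neq\emptyset$, since otherwise any rank-one convex function (e.g.\ $F\equiv 0$) works.

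First I would apply Lemma~\ref{lem:zero-level-set} to obtain a nonnegative rank-one convex function $g:\Sigma_t\to\R$ with $\tilde K=\{\xi\in\Sigma_t:g(\xi)=0\}$. Since $\tilde K$ is compact and contained in the open set $\OO$, I would fix $r>0$ small enough that $V_0:=\{\xi\in\Sigma_t:\dist(\xi,\tilde K)<r\}$ has compact closure $\bar V_0=\{\dist(\cdot,\tilde K)\le r\}\subset\OO$; then $\partial|_{\Sigma_t}V_0=\{\dist(\cdot,\tilde K)=r\}$ is compact and disjoint from $\tilde K$. Since $f$ and $g$ are continuous, I set $M:=\max_{\bar V_0}|f|$ and $\delta:=\min_{\partial|_{\Sigma_t}V_0}g$, where $\delta>0$ because $\partial|_{\Sigma_t}V_0\cap\tilde K=\emptyset$ and $\tilde K=\{g=0\}$. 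Then I pick $b:=\min_{\tilde K}f-1$ and $a>(M-b)/\delta$, so that $ag+b>M\ge f$ on $\partial|_{\Sigma_t}V_0$ while $ag+b=b<f$ on $\tilde K$.

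Next I would set $V:=\{\xi\in V_0:f(\xi)>ag(\xi)+b\}$, an open subset of $\OO$ containing $\tilde K$. The choice of $a$ forces the compact set $\{\xi\in\bar V_0:f(\xi)\ge ag(\xi)+b\}$ to be disjoint from $\partial|_{\Sigma_t}V_0$, hence $\bar V\subset V_0$. Moreover, for $\xi\in\partial|_{\Sigma_t}V$ one has $f(\xi)\le ag(\xi)+b$ (as $\xi\in V_0\setminus V$) and $f(\xi)\ge ag(\xi)+b$ (as $\xi\in\bar V$), so $f=ag+b$ on $\partial|_{\Sigma_t}V$. Therefore $U:=V_0$, $V$, $f_1:=ag+b:\Sigma_t\to\R$ and $f_2:=f|_{V_0}:V_0\to\R$ satisfy all the hypotheses of Lemma~\ref{lem:rank-1-property}, so the function
\[
F(\xi)=\begin{cases} f(\xi), & \xi\in V,\\ ag(\xi)+b, & \xi\in\Sigma_t\setminus V, \end{cases}
\]
is rank-one convex on $\Sigma_t$ and coincides with $f$ on the open neighborhood $V$ of $\tilde K$. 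The unconstrained statement follows by running the same argument with $\Sigma_t$ and $K^{rc,\Sigma_t}$ replaced throughout by $\M^{m\times n}$ and $K^{rc}$, using the corresponding versions of Lemmas~\ref{lem:zero-level-set} and~\ref{lem:rank-1-property}.

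I expect the only delicate point to be the bookkeeping in the previous paragraph: choosing $r$, $b$, $a$ so that the set $\{f>ag+b\}$ is trapped compactly inside $V_0$ — which is precisely what makes $\bar V\subset U$ hold as Lemma~\ref{lem:rank-1-property} demands — while still keeping $\tilde K$ inside it. This is where the zero-level-set identity $\tilde K=\{g=0\}$ and the compactness of $\tilde K$ and of $\partial|_{\Sigma_t}V_0$ are essential; the remaining verifications (openness of $V$, the boundary equality $f=ag+b$ on $\partial|_{\Sigma_t}V$, and checking the hypotheses of the gluing lemma) are routine.
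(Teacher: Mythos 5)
Your proof is correct and follows essentially the same route as the paper's: both glue $f$ near $\tilde K$ to a large multiple of the level-set function $g$ from Lemma~\ref{lem:zero-level-set} by means of the gluing Lemma~\ref{lem:rank-1-property}. The only difference is bookkeeping: the paper localizes by keeping the connected components of $\{f+c>kg\}$ that meet $\tilde K$ and taking $k$ large, while you trap $V=\{f>ag+b\}\cap V_0$ compactly inside a fixed tubular neighborhood $V_0$ by choosing $a$ large, which is a harmless variant.
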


\begin{proof}
We first use Lemma \ref{lem:zero-level-set} to obtain a nonnegative rank-one convex function $g:\Sigma_t\to\R$ such that $\tilde K=\{\xi\in\Sigma_t\,|\,g(\xi)=0\}$. Set $m=\min_{\tilde K}f$. Choose a number $c>0$ so that $m+c>0.$ Define $\tilde f=f+c$ in $\OO$; then $\min_{\tilde K}\tilde f=m+c>0$. For each $\xi\in \tilde K$, we thus can choose an open ball $B_\xi$ in $\Sigma_t$ with $\bar B_\xi\subset\OO$ and center $\xi$ such that $\tilde f>0$ on $\bar B_\xi$. As $\tilde K$ is compact, we can choose finitely many matrices $\xi_1,\cdots,\xi_N\in \tilde{K}$ such that $\tilde{K}\subset\cup_{j=1}^N B_{\xi_j}=:U$; so $\tilde f>0$ on $\bar U\subset\OO$.

For each $k\in\N$, let
\[
U_k=\{\xi\in\OO\,|\,\tilde f(\xi)>k g(\xi)\},
\]
which is open in $\OO$, and let $V_k$ be the union of all connected components of $U_k$ that have a nonempty intersection with $\tilde K$; then $\tilde{K}\subset V_k\subset U_k$. For each $\delta>0$, let $S_\delta=\{\xi\in\Sigma_t\,|\,\mathrm{dist}(\xi,\tilde{K})<\delta\}$. Fix a $\delta>0$ so small that $\bar S_\delta\subset U.$
As $g>0$ on $\overline{U\setminus S_\delta}$, we can choose a number $k\in\N$ so large that
\[
k\cdot \min_{\overline{U\setminus S_\delta}} g\ge\max_{\overline{U\setminus S_\delta}}\tilde f,
\]
and so $U_{k}\cap (\overline{U\setminus S_\delta})=\emptyset.$ Thus we easily see from the definition of $V_k$ that $V_{k}\subset  S_{\delta}$.

Next, define
\[
\tilde F(\xi)=\left\{\begin{array}{ll}
                \tilde f(\xi), & \xi\in V_{k}, \\
                k g(\xi), & \xi\in\Sigma_t\setminus V_{k};
              \end{array}\right.
\]
then, by Lemma \ref{lem:rank-1-property}, $\tilde F:\Sigma_t\to\R$ is rank-one convex. Take $F=\tilde F-c$ in $\Sigma_t;$ then $F:\Sigma_t\to\R$ is rank-one convex and $F\equiv f$ in $V_k$, where $\tilde K\subset V_k\subset\OO.$

For the unconstrained case, one can repeat the same proof with $\Sigma_t$ and $K^{rc,\Sigma_t}$ replaced by $\M^{m\times n}$ and $K^{rc}$, respectively.
\end{proof}

Note that for each $\xi\in\M^{m\times n}$, there exists a unique number $s_{\xi}\in\R$ such that $\pi(\xi):=\xi+s_{\xi}L/|L|\in\Sigma_t$; so $\xi=\pi(\xi)+t_{\xi}L/|L|$, where $t_\xi:=-s_\xi$.
As the last preparation for the proof of Lemma \ref{lem:main-approximation}, we prove the following lemma.

\begin{lem}\label{lem:approx-linear-whole}
Let $f:\Sigma_t\to\R$ be a smooth rank-one convex function.
For each $\epsilon>0$ and each $k>0$, let $F_{\epsilon,k}:\M^{m\times n}\to\R$ be the function defined by
\[
F_{\epsilon,k}(\xi)=f(\pi(\xi))+\epsilon|\xi|^2+k|\mathcal{L}(\xi)-t|^2\quad\forall \xi\in\M^{m\times n}.
\]
Let $K$ be a compact subset of $\Sigma_t$. Then for each $\epsilon>0$, there exists a number $k>0$ such that $F_{\epsilon,k}:U_{\epsilon,k}\to\R$ is rank-one convex, for some open set $U_{\epsilon,k}$ in $\M^{m\times n}$ containing $K$.
\end{lem}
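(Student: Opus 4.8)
The plan is to verify rank-one convexity of $F_{\epsilon,k}$ directly by computing, for an arbitrary rank-one segment, the second derivative of $s\mapsto F_{\epsilon,k}(\xi+sa\otimes b)$ and showing it is nonnegative once $k$ is chosen large (depending on $\epsilon$, $f$, and $K$, but uniformly over a neighborhood of $K$). Fix $(\xi,a,b)\in\M^{m\times n}\times\R^m\times\R^n$ with $a\otimes b\ne 0$. Since $\mathcal{L}$ is linear, $\mathcal{L}(\xi+sa\otimes b)-t = (\mathcal{L}(\xi)-t) + s\,\mathcal{L}(a\otimes b)$, so the $k$-term contributes $2k|\mathcal{L}(a\otimes b)|^2$ to the second $s$-derivative; the $\epsilon$-term contributes $2\epsilon|a\otimes b|^2$. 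The only possibly negative contribution comes from $f\circ\pi$. Here I would use that $\pi$ is affine: $\pi(\xi+sa\otimes b) = \pi(\xi) + s\,\pi_0(a\otimes b)$ where $\pi_0(\zeta)=\zeta - (L\cdot\zeta)L/|L|^2$ is linear projection onto $\Sigma_0$ (the direction space of $\Sigma_t$), so $\frac{d^2}{ds^2}f(\pi(\xi+sa\otimes b)) = D^2f(\pi(\xi+sa\otimes b))[\pi_0(a\otimes b),\pi_0(a\otimes b)]$, which is bounded below by $-C|\pi_0(a\otimes b)|^2$ with $C := \sup\{\|D^2f(\zeta)\|: \zeta\in\pi(\bar B)\}<\infty$ for a fixed bounded neighborhood $\bar B$ of $K$ in $\M^{m\times n}$ (finite since $f$ is smooth and $\pi(\bar B)$ is compact).

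The core of the argument is then the elementary linear-algebra fact that $\pi_0(a\otimes b)=0$ forces $\mathcal{L}(a\otimes b)\ne 0$ — more precisely, one cannot have both quantities small relative to $|a\otimes b|$. Indeed $|a\otimes b|^2 = |\pi_0(a\otimes b)|^2 + |\mathcal{L}(a\otimes b)|^2/|L|^2$ by orthogonal decomposition along $L/|L|$. Hence at a point $\zeta=\pi(\xi+sa\otimes b)$ in $\pi(\bar B)$,
\[
\frac{d^2}{ds^2}F_{\epsilon,k}(\xi+sa\otimes b) \ge -C|\pi_0(a\otimes b)|^2 + 2\epsilon|a\otimes b|^2 + 2k|\mathcal{L}(a\otimes b)|^2.
\]
Using $|\pi_0(a\otimes b)|^2 = |a\otimes b|^2 - |\mathcal{L}(a\otimes b)|^2/|L|^2 \le |a\otimes b|^2$, the right side is at least $(2\epsilon - C)|a\otimes b|^2 + (2k + C/|L|^2)|\mathcal{L}(a\otimes b)|^2$ — but this only works if $2\epsilon\ge C$, which we cannot assume. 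So instead I would split into two cases according to the size of $|\mathcal{L}(a\otimes b)|$ relative to $|a\otimes b|$: if $|\mathcal{L}(a\otimes b)|^2 \ge \delta |a\otimes b|^2$ for a threshold $\delta=\delta(\epsilon,C,L)$ to be fixed, then the $2k$-term dominates once $k$ is large; if $|\mathcal{L}(a\otimes b)|^2 < \delta|a\otimes b|^2$, then $|\pi_0(a\otimes b)|^2 \le |a\otimes b|^2$ while $-C|\pi_0(a\otimes b)|^2 + 2\epsilon|a\otimes b|^2$ — hmm, this still needs $2\epsilon\ge C$. The fix is to bound $|\pi_0(a\otimes b)|^2$ the other way: actually I want $-C|\pi_0(a\otimes b)|^2$ offset by the $k$-term when $\mathcal{L}(a\otimes b)$ is not too small, and I realize the clean route is: $-C|\pi_0(a\otimes b)|^2 = -C|a\otimes b|^2 + (C/|L|^2)|\mathcal{L}(a\otimes b)|^2 \ge -C|a\otimes b|^2$; so the lower bound is $(2\epsilon-C)|a\otimes b|^2 + (2k+C/|L|^2)|\mathcal{L}(a\otimes b)|^2$. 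This is what genuinely needs care when $2\epsilon<C$.

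The resolution — and what I expect to be the main obstacle — is that nonnegativity of the $s$-second-derivative of $F_{\epsilon,k}$ need not hold at every $(\xi,a,b)$; rank-one convexity on $U_{\epsilon,k}$ is the conclusion, and one should exploit the hypothesis (\ref{assume-1}) (equivalently, $L:\R^n\to\R^m$ injective), which is in force throughout the paper. Injectivity of $L$ gives a \emph{quantitative} lower bound: there is $c_0=c_0(L)>0$ with $|\mathcal{L}(a\otimes b)| = |L\cdot(a\otimes b)| = |a^TLb|$... actually the sharper fact is $|\mathcal{L}(a\otimes b)|^2/|L|^2 + |\pi_0(a\otimes b)|^2 = |a\otimes b|^2 = |a|^2|b|^2$, and one shows $|\pi_0(a\otimes b)|^2 \le (1-c_0)|a\otimes b|^2$, i.e. $|\mathcal{L}(a\otimes b)|^2 \ge c_0|L|^2|a\otimes b|^2$, fails in general (e.g. $a\perp Lb$ makes $a\cdot Lb=0$). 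So the correct statement is genuinely local: I would instead argue that $f\circ\pi + \epsilon|\cdot|^2$ is already rank-one convex \emph{on $\Sigma_t$} in the tangential directions, and the $k$-penalty controls the normal direction $L/|L|$ and mixed terms on a neighborhood of $K$ by compactness — writing $\xi = \pi(\xi)+\tau L/|L|$ with $|\tau|<\rho$ and noting $F_{\epsilon,k}$ restricted to each translate $\Sigma_{t'}$ is $f\circ\pi+\epsilon|\cdot|^2+k|t'-t|^2$, whose Hessian in $\Sigma$-directions is bounded below independent of $k$, so it suffices to choose $k$ so large that the full $\M^{m\times n}$-Hessian, which adds $2kL/|L|\otimes L/|L|$ in the normal direction plus bounded cross terms, becomes positive semidefinite on rank-one directions — a standard "large diagonal penalty beats bounded off-diagonal" estimate via the continuity of $D^2(f\circ\pi+\epsilon|\cdot|^2)$ on the compact set $\pi(\bar B)$, which localizes the construction to the open set $U_{\epsilon,k}=\{\xi: \pi(\xi)\in \text{(nbhd of }K), |t_\xi|<\rho\}$.
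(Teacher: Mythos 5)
Your setup is sound (the second derivative along a rank-one line is indeed $D^2f(\pi(\xi+sa\otimes b))[\pi_0(a\otimes b),\pi_0(a\otimes b)]+2\epsilon|a\otimes b|^2+2k|\mathcal{L}(a\otimes b)|^2$, and the orthogonal splitting $|a\otimes b|^2=|\pi_0(a\otimes b)|^2+|\mathcal{L}(a\otimes b)|^2/|L|^2$ is correct), and you correctly identify the hard case: rank-one directions with $0<|\mathcal{L}(a\otimes b)|\ll |a\otimes b|$, where the penalty $2k|\mathcal{L}(a\otimes b)|^2$ is arbitrarily small for fixed $k$, while $\pi_0(a\otimes b)$ is in general a rank-two matrix, so rank-one convexity of $f$ on $\Sigma_t$ gives no sign for $D^2f[\pi_0(a\otimes b),\pi_0(a\otimes b)]$ and the crude bound $-C|\pi_0(a\otimes b)|^2$ is not absorbed by $2\epsilon|a\otimes b|^2$. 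But your proposed resolution does not close this case. The appeal to hypothesis (\ref{assume-1}) is a red herring: the lemma does not use it (it holds for any $L\neq 0$), and, as you yourself observe, injectivity of $L$ gives no uniform lower bound on $|\mathcal{L}(a\otimes b)|$ over rank-one matrices. The final block-Hessian sketch is also off target: with respect to the orthogonal decomposition $\M^{m\times n}=\ker\mathcal{L}\oplus\R L$ the Hessian of $F_{\epsilon,k}$ is block \emph{diagonal} ($f\circ\pi$ is constant in the normal direction, $\epsilon|\cdot|^2$ is isotropic, and the penalty depends only on $L\cdot\zeta$), so there are no off-diagonal terms for a ``large diagonal penalty beats bounded off-diagonal'' estimate to control; the obstruction is precisely that the tangential block $D^2f+2\epsilon I$ is nonnegative only on \emph{rank-one tangential} directions, not on all of $\ker\mathcal{L}$, and localizing $\xi$ near $K$ does not remove the problematic near-tangential rank-one directions, which occur at every base point. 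So the argument, as written, never establishes $\tfrac{d^2}{ds^2}F_{\epsilon,k}\ge 0$ in the regime you flagged; this is a genuine gap, not a technicality.

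The missing idea is a compactness/continuity argument in the \emph{direction} variable, which is exactly what the paper's proof supplies by contradiction: if $F_{\epsilon,k}$ failed to be rank-one convex near $K$ for every $k$, the Legendre--Hadamard condition yields points $\eta^k\to\eta\in K$ and unit directions $\lambda^k\otimes\mu^k\to\lambda\otimes\mu$ with $D^2(f\circ\pi)(\eta^k)[\lambda^k\otimes\mu^k,\lambda^k\otimes\mu^k]+2\epsilon+2ka_k^2<0$, $a_k=L\cdot(\lambda^k\otimes\mu^k)$; boundedness of the first term forces $a_k\to 0$, so the limit direction is a \emph{tangential rank-one} direction at a point of $\Sigma_t$, where rank-one convexity of $f$ gives $D^2(f\circ\pi)(\eta)[\lambda\otimes\mu,\lambda\otimes\mu]\ge 0$, contradicting the surviving margin $2\epsilon$. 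A direct (non-contradiction) repair of your argument is possible but must quantify the same mechanism: on the compact set of pairs $(\eta,\lambda\otimes\mu)$ with $\eta$ in a compact neighborhood of $K$ in $\Sigma_t$ and $\lambda\otimes\mu$ a unit rank-one matrix, use continuity of $(\eta,\lambda,\mu)\mapsto D^2f(\eta)[\pi_0(\lambda\otimes\mu),\pi_0(\lambda\otimes\mu)]$ to get this quantity $\ge-\epsilon$ whenever $|\mathcal{L}(\lambda\otimes\mu)|<\delta$ for a suitable $\delta>0$ (its value at $\mathcal{L}(\lambda\otimes\mu)=0$ being $\ge 0$), and then choose $k\ge C/(2\delta^2)$ to handle $|\mathcal{L}(\lambda\otimes\mu)|\ge\delta$. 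Without this step in the direction variable, the case split you began cannot be completed.
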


\begin{proof}
We prove by contradiction; suppose there exists an $\epsilon>0$ such that for each $k>0$, if $V$ is any open set in $\M^{m\times n}$ containing $K$, then $F_{\epsilon,k}:V\to\R$ is not rank-one convex.

Let $k\in\N$, and set
\[
V_k=\{\xi\in\M^{m\times n}\,|\,\mathrm{dist}(\xi,K)<1/k\}.
\]
Then $F_{\epsilon,k}:V_k\to\R$ is not rank-one convex. By the Legendre-Hadamard condition, there exist a matrix $\eta^k\in V_k$ and vectors $\lambda^k\in\R^m$, $\mu^k\in\R^n$ with $|\lambda^k|=|\mu^k|=1$ such that
\begin{equation}\label{condition-LH}
D^2F_{\epsilon,k}(\eta^k)(\lambda^k\otimes\mu^k,\lambda^k\otimes\mu^k)=\sum_{1\le i,j\le m,\,1\le \alpha,\beta\le n} \frac{\partial^2 F_{\epsilon,k}(\xi)}{\partial\xi_{i\alpha}\partial\xi_{j\beta}}\Bigg|_{\xi=\eta^k} \lambda^k_i\lambda^k_j\mu^k_\alpha\mu^k_\beta < 0.
\end{equation}
Passing to a subsequence if necessary, we have
\[
\mbox{$\eta^k\to \eta$\;\; in $\M^{m\times n}$, $\lambda^k\to\lambda$\;\; in $\R^m$ and $\mu^k\to\mu$\;\; in $\R^n$,}
\]
for some $\eta\in K$, $\lambda\in\R^m$ and $\mu\in\R^n$ with $|\lambda|=|\mu|=1$.

Set $g(\xi)=|\mathcal{L}(\xi)-t|^2$ for all $\xi\in\M^{m\times n}$. Note here that $\mathcal{L}(\eta^k+s \lambda^k\otimes\mu^k)=a_k s+b_k$ for all $s\in\R$, where $a_k=L\cdot (\lambda^k\otimes \mu^k)$ and $b_k=L\cdot\eta^k$. So
\[
g(\eta^k+s\lambda^k\otimes\mu^k)=|a_k s+b_k-t|^2=a_k^2s^2+2a_k(b_k-t)s+(b_k-t)^2.
\]
From this, we get
\begin{equation}\label{2nd-derivative-g}
\frac{d^2}{ds^2}g(\eta^k+s\lambda^k\otimes\mu^k)=2a_k^2.
\end{equation}
Differentiating $F_{\epsilon,k}(\eta^k+s\lambda^k\otimes\mu^k)$ twice with respect to the variable $s$ and then letting $s=0$, we obtain from (\ref{condition-LH}) and (\ref{2nd-derivative-g}) that
\[
D^2(f\circ \pi)(\eta^k)(\lambda^k\otimes\mu^k,\lambda^k\otimes\mu^k)+2\epsilon+2k a_k^2 <0.
\]
Taking the limit supremum as $k\to\infty$, we get
\[
D^2(f\circ \pi)(\eta)(\lambda\otimes\mu,\lambda\otimes\mu)+2\epsilon+2\cdot\limsup_{k\to\infty}k a_k^2 \le0;
\]
thus $\lim_{k\to\infty}a_k=0$ and
$
D^2(f\circ \pi)(\eta)(\lambda\otimes\mu,\lambda\otimes\mu)+2\epsilon\le 0.
$

Let $l>0$. Then
\[
D^2 F_{\epsilon,l}(\eta^k)(\lambda^k\otimes\mu^k,\lambda^k\otimes\mu^k)= D^2(f\circ \pi)(\eta^k)(\lambda^k\otimes\mu^k,\lambda^k\otimes\mu^k)
 +2\epsilon+2l a_k^2.
\]
Letting $k\to\infty$, we get
\[
D^2 F_{\epsilon,l}(\eta)(\lambda\otimes\mu,\lambda\otimes\mu)=D^2(f\circ \pi)(\eta)(\lambda\otimes\mu,\lambda\otimes\mu)+2\epsilon \le 0;
\]
that is,
\begin{equation}\label{contradiction-inequality}
D^2 F_{\epsilon,l}(\eta)(\lambda\otimes\mu,\lambda\otimes\mu)\le 0\quad\forall l>0.
\end{equation}

Next, observe
\[
\frac{d}{ds}\mathcal{L}(\eta^k+s \lambda^k\otimes \mu^k)=a_k;
\]
thus letting $k\to\infty$, we get
\[
\frac{d}{ds}\mathcal{L}(\eta+s \lambda\otimes \mu)=0\quad\forall s\in\R.
\]
Thus $\mathcal{L}(\eta+s\lambda\otimes\mu)=\mathcal{L}(\eta)=t$ for all $s\in\R$.
Let $l>0$. We now have
\[
F_{\epsilon,l}(\eta+s\lambda\otimes\mu)=f(\eta+s\lambda\otimes\mu)+\epsilon|\eta+s\lambda\otimes \mu|^2.
\]
Since the function $s\mapsto f(\eta+s\lambda\otimes\mu)$ $(s\in\R)$ is convex, we have
\[
\frac{d^2}{ds^2}f(\eta+s\lambda\otimes\mu)\ge 0\quad \forall s\in\R.
\]
Thus,
\[
D^2 F_{\epsilon,l}(\eta)(\lambda\otimes\mu,\lambda\otimes\mu)=\frac{d^2}{ds^2} F_{\epsilon,l}(\eta+s\lambda\otimes\mu)\Big|_{s=0}\ge 2\epsilon\quad\forall l>0;
\]
this is a contradiction to (\ref{contradiction-inequality}), and the proof is complete.
\end{proof}

We are now ready to prove Lemma \ref{lem:main-approximation}.

\begin{proof}[Proof of Lemma \ref{lem:main-approximation}]
Using Lemma \ref{lem:rank-1-extension}, we can find a rank-one convex function $g:\Sigma_t\to\R$ such that $g=f$ on $\tilde K$. Then we choose an open ball $B$ in $\M^{m\times n}$ containing $\tilde K$ and set $R=\sup_{\xi\in B\cap\Sigma_t}|\xi|$.

Let $\epsilon>0$. Upon on mollifying the function $g$, we can find a smooth rank-one convex function $\tilde g:\Sigma_t\to\R$ such that $|\tilde g-g|<\epsilon/2$ on the compact set $\bar B\cap\Sigma_t$.

For each $k>0$, let $\tilde G_{\epsilon,k}:\M^{m\times n}\to\R$ be the function defined by
\[
\tilde G_{\epsilon,k}(\xi)=\tilde g(\pi(\xi))+\frac{\epsilon}{2R^2}|\xi|^2+k|\mathcal{L}(\xi)-t|^2\quad \forall\xi\in\M^{m\times n}.
\]
Then by Lemma \ref{lem:approx-linear-whole}, there exists a number $k>0$ such that $\tilde G_{\epsilon,k}:U_{\epsilon,k}\to\R$ is rank-one convex, for some open set $U_{\epsilon,k}$ in $\M^{m\times n}$ containing $\bar B\cap\Sigma_t$. Let us write $G=\tilde G_{\epsilon,k}:U_{\epsilon,k}\to\R$. Note that for all $\xi\in\tilde K\subset B\cap\Sigma_t$,
\[
|G(\xi)-g(\xi)|\le |G(\xi)-\tilde g(\xi)|+|\tilde g(\xi)-g(\xi)|<\frac{\epsilon}{2R^2}R^2+\frac{\epsilon}{2}=\epsilon.
\]
Observe $(\bar{B}\cap\Sigma_t)^{rc}=\bar{B}\cap\Sigma_t\subset U_{\epsilon,k}$. Applying Lemma \ref{lem:rank-1-extension}, we can choose a rank-one convex function $F:\M^{m\times n}\to \R$ such that $F=G$ on $\bar{B}\cap\Sigma_t$. Thus
\[
|F-f|=|G-g|<\epsilon\quad\mbox{on $\tilde K$}.
\]
\end{proof}


We finally get to the proof of Theorem \ref{thm:main-lemma}.

\begin{proof}[Proof of Theorem \ref{thm:main-lemma}]
Recall from Proposition \ref{prop:rank-one-1} and \cite{Pe1} that
\[
K^{rc,\Sigma_t}\subset K^{rc}=\{\bar\mu\,|\,\mu\in\mathcal{M}^{rc}(K)\}\subset\Sigma_t.
\]
Let $\nu\in\mathcal{M}^{rc}(K).$ To show the reverse inclusion $K^{rc,\Sigma_t}\supset K^{rc}$, it suffices to check that $\bar\nu\in \tilde K:= K^{rc,\Sigma_t}$. To prove by contradiction, suppose $\bar\nu\in\Sigma_t\setminus\tilde K$. Then there exists a rank-one convex function $g:\Sigma_t\to\R$ with $g\le 0$ on $K$ such that $g(\bar\nu)>0$; so $\langle\nu,g\rangle\le 0<g(\bar\nu)$. Then by Lemma \ref{lem:main-approximation}, for any given $\epsilon>0$ to be specified below, we get a rank-one convex function $h:\M^{m\times n}\to\R$ such that $|h-g|<\epsilon$ on the compact set $K\cup\{\bar\nu\}$. This implies that $\langle\nu,h\rangle<\langle\nu,g\rangle+\epsilon<g(\bar\nu)-\epsilon<h(\bar\nu)$ if $\epsilon>0$ is chosen so small that $\epsilon<\frac{g(\bar\nu)-\langle\nu,g\rangle}{2}$. In short, we have $\langle\nu,h\rangle<h(\bar\nu)$; a contradiction to the fact that $\nu$ is a laminate. Thus $\bar\nu\in\tilde{K}$, and so $K^{rc}=K^{rc,\Sigma_t}$.

Next, let $\OO$ be an open set in $\Sigma_t$ containing $K^{rc}$.
We choose a bounded open set $U$ in $\Sigma_t$ such that $K^{rc}\subset U\subset\bar{U}\subset\OO.$ Set $\mathcal{F}=\{\mu\in\mathfrak{L}(U)\,|\,\bar\mu=\bar\nu\}$; then $\delta_{\bar\nu}\in\mathcal{F}\neq\emptyset$. To finish the proof, it suffices to show that the weak* closure $\bar{\mathcal{F}}^*$ of $\mathcal{F}$ in $\mathcal{P}$ contains $\nu$. We prove by contradiction; so suppose $\nu\not\in\bar{\mathcal{F}}^*$. Since $\mathcal{F}$ is convex, it follows from the Hahn-Banach Theorem that there exists a continuous function $f:\bar U\to\R$ such that
\[
\langle \nu,f\rangle<\inf\{\langle\mu,f\rangle\,|\,\mu\in\mathcal{F}\}.
\]
Since $\bar U$ is compact, it follows from Lemma \ref{lem:rank-one-envelope} that
\[
R_{U}f(\bar\nu)=\inf\{\langle\mu,f\rangle\,|\,\mu\in\mathcal{F}\}.
\]
Note that $R_U f:U\to\R$ is a rank-one convex function with $R_U f\le f$ in $U$. From the above observation, we have $\langle\nu,R_U f\rangle\le\langle \nu,f\rangle< R_U f(\bar\nu)$. By Lemma \ref{lem:main-approximation}, for any given $\epsilon>0$ to be chosen below, we obtain a rank-one convex function $F:\M^{m\times n}\to\R$ such that $|F-R_U f|<\epsilon$ on $\tilde K=K^{rc}$. Since $\bar\nu\in K^{rc}$, we choose $0<\epsilon<\frac{R_U f(\bar\nu)-\langle\nu,R_U f\rangle}{2}$ to have $\langle \nu,F\rangle< F(\bar\nu)$; a contradiction to the fact that $\nu$ is a laminate.

The proof is now complete.
\end{proof}


\section{Rank-one smooth approximation under  linear constraint}\label{sec:rank-1}
We begin this section by introducing a pivotal approximation result, Theorem \ref{thm:rank-1}, for proving the main results of the paper, Theorems \ref{thm:main-1} and \ref{thm:main-2}. Its special cases have been successfully applied to some nonstandard evolution problems \cite{KY1,KY2,KY3,KK1}. Although the proof of Theorem \ref{thm:rank-1} already appeared in \cite{KK1}, we include it in Section \ref{sec:proof-rank-1} for the sake of completeness as we make use of the general version of the theorem for the first time in this paper.

\begin{thm}\label{thm:rank-1}
Let $m,n\ge 2$ be integers, and let $A,B\in\M^{m\times n}$ be such that $\rank(A-B)=1$; hence
\[
A-B=a\otimes b=(a_i b_j)
\]
for some nonzero vectors $a\in\R^m$ and $b\in\R^n$ with $|b|=1.$
Let $L\in\M^{m\times n}$ satisfy
\begin{equation}\label{rank-1-1}
Lb\ne 0 \;\;\mbox{in}\;\;\R^m,
\end{equation}
and let $\mathcal{L}:\M^{m\times n}\to \R$ be the linear function defined by
\[
\mathcal{L}(\xi)=\sum_{1\le i\le m,\, 1\le j \le n} L_{ij}\xi_{ij}\quad \forall \xi\in\M^{m\times n}.
\]
Assume $\mathcal{L}(A)=\mathcal{L}(B)$ and $0<\lambda<1$ is any fixed number. Then there exists a linear partial differential operator $\Phi:C^1(\R^n;\R^m)\to C^0(\R^n;\R^m)$ satisfying the following properties:

(1) For any open set $\Omega\subset\R^n$,
\[
\Phi v\in C^{k-1}(\Omega;\R^m)\;\;\mbox{whenever}\;\; k\in\N\;\;\mbox{and}\;\;v\in C^{k}(\Omega;\R^m)
\]
and
\[
\mathcal{L}(\nabla\Phi v)=0 \;\;\mbox{in}\;\;\Omega\;\;\forall v\in C^2(\Omega;\R^m).
\]

(2) Let $\Omega\subset\R^n$ be any bounded domain. For each $\tau>0$, there exist a function $g=g_\tau\in  C^{\infty}_{c}(\Omega;\R^m)$ and two disjoint open sets $\Omega_A,\Omega_B\subset\subset\Omega$ such that
\begin{itemize}
\item[(a)] $\Phi g\in C^\infty_c(\Omega;\R^m)$,
\item[(b)] $\dist(\nabla\Phi g,[-\lambda(A-B),(1-\lambda)(A-B)])<\tau$ in $\Omega$,
\item[(c)] $\nabla \Phi g(x)= \left\{\begin{array}{ll}
                                (1-\lambda)(A-B) & \mbox{$\forall x\in\Omega_A$}, \\
                                -\lambda(A-B) & \mbox{$\forall x\in\Omega_B$},
                              \end{array}\right.$
\item[(d)] $\big||\Omega_A|-\lambda|\Omega|\big|<\tau$, $\big||\Omega_B|-(1-\lambda)|\Omega|\big|<\tau$,
\item[(e)] $\|\Phi g\|_{L^\infty(\Omega)}<\tau$,
\end{itemize}
where $[-\lambda(A-B),(1-\lambda)(A-B)]$ is the closed line segment in $\mathrm{ker}\mathcal{L}\subset\M^{m\times n}$ joining $-\lambda(A-B)$ and $(1-\lambda)(A-B)$.
\end{thm}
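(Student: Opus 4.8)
The plan is to build the operator $\Phi$ explicitly from a single scalar potential, exploiting the fact that $\mathcal{L}(A-B)=0$ means the rank-one direction $a\otimes b$ lies in $\ker\mathcal{L}$, while condition $Lb\ne 0$ guarantees we can encode $a\otimes b$ as the gradient of a divergence-type expression that is automatically annihilated by $\mathcal{L}(\nabla\cdot)$. Concretely, write $a-$ decompose $a\in\R^m$ and pick a vector $c\in\R^n$ with $c\cdot b=0$ cleverly, or—more robustly—note that since $Lb\ne 0$, one can choose coordinates on $\R^n$ in which $b=e_n$ and then the constraint $\mathcal{L}(\nabla w)=0$ becomes a condition involving $\partial_{x_n}$ of the components of $w$; solving it amounts to choosing $w$ so that each troublesome normal derivative is compensated by tangential derivatives. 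First I would set $\Phi v = $ (a fixed first-order constant-coefficient operator) applied to $v$, designed so that (i) it is linear, (ii) it lowers differentiability by exactly one order and commutes with restriction to open sets, giving property (1), and (iii) $\mathcal{L}(\nabla\Phi v)\equiv 0$ identically whenever $v\in C^2$; the latter is an algebraic identity among mixed partials, of the same flavor as $\operatorname{div}\operatorname{curl}=0$, and is forced by $\mathcal{L}(a\otimes b)=0$ together with the structure $A-B=a\otimes b$.

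For part (2), the strategy is the standard convex-integration \emph{one-dimensional laminate} construction, pushed through the operator $\Phi$. I would start from the scalar sawtooth: pick a $1$-periodic Lipschitz function $h:\R\to\R$ with $h'=1-\lambda$ on a set of density $\lambda$ and $h'=-\lambda$ on the complementary set of density $1-\lambda$ (so $\int_0^1 h=0$), then regularize its corners slightly and rescale in space by a small parameter to get $h_\delta$. The candidate potential is $v(x)=\delta^2\, \phi(b\cdot x/\delta)\,(\text{some fixed vector})$ cut off by a smooth bump $\chi$ supported in $\Omega$ and identically $1$ on a large interior subset; taking $\tau$ small forces $\delta$ small and the cutoff region thin. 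One checks that on the region where $\chi\equiv 1$ one has $\nabla\Phi v = h_\delta'(b\cdot x/\delta)\,(A-B)$ up to the lamination direction, which takes exactly the two values $(1-\lambda)(A-B)$ and $-\lambda(A-B)$; this yields (c), and the open sets $\Omega_A,\Omega_B$ are the preimages of the two slopes. Estimates (b), (d), (e) then follow from: the segment property of the sawtooth (so $\nabla\Phi v$ stays within $\tau$ of $[-\lambda(A-B),(1-\lambda)(A-B)]$ once the regularization of corners is $\tau$-small and the cutoff contributes only an $O(\delta)$ error), the density statement for $\{h'=1-\lambda\}$ versus the thin cutoff collar (whose measure is $O(\tau)$), and the bound $\|\Phi v\|_\infty = O(\delta)$ coming from the $\delta^2$ prefactor against a first-order operator.

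The main obstacle I anticipate is arranging properties (b) and (c) \emph{simultaneously} with $\Phi g\in C^\infty_c(\Omega)$—that is, reconciling compact support (which forces the gradient to average to zero and hence to leave the two-point set near $\partial\Omega$) with the demand that $\nabla\Phi g$ take \emph{exactly} the two extreme values on $\Omega_A,\Omega_B$ while staying \emph{within $\tau$} of the connecting segment everywhere. This is the usual tension in convex integration and is resolved by the cutoff+density bookkeeping above: one sacrifices control only on a collar of measure $<\tau$, where one merely invokes (b) rather than (c). A secondary technical point is verifying that the operator $\Phi$ can be chosen so that $\Phi$ of a $C^\infty_c$ function is again $C^\infty_c$ (no loss of support), which holds because $\Phi$ is a \emph{differential} operator, not an integral one; and checking the identity $\mathcal{L}(\nabla\Phi v)=0$ honestly, which is where hypothesis (\ref{rank-1-1}) is actually consumed—without $Lb\ne 0$ the natural choice of potential degenerates. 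The remaining computations are routine one-variable calculus and measure estimates, so I would not grind through them here.
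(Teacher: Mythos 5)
Your outline reproduces the easy half of the argument but leaves out the part that actually constitutes the theorem: the construction of $\Phi$. You say the operator should be ``designed so that $\mathcal{L}(\nabla\Phi v)\equiv 0$'' and that this identity ``is forced by $\mathcal{L}(a\otimes b)=0$ together with the structure $A-B=a\otimes b$,'' but that is not an argument, and the logic is off: the identity $\mathcal{L}(\nabla\Phi v)\equiv 0$ for \emph{all} $v\in C^2$ is a condition on the coefficients of $\Phi$ relative to $L$ alone (it says that for each $k$ and each pair $(l,j)$ the symmetrized coefficient $\sum_i(L_{ij}a^i_{kl}+L_{il}a^i_{kj})$ vanishes), and it has nothing to do with $a\otimes b$. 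The genuinely delicate point is that $\Phi$ must satisfy this annihilation identity \emph{and simultaneously} be normalized so that, when applied to a one-variable profile in the direction $b$, it produces a map whose gradient is \emph{exactly} a scalar multiple of $a\otimes b$ (this exactness is what conclusion (c) demands, not merely closeness). It is in reconciling these two requirements that the hypotheses $Lb\neq 0$ and $\mathcal{L}(A)=\mathcal{L}(B)$ are consumed; in the paper this is done by first reducing $L$ to a normal form by orthogonal changes of variables (a Householder reflection and a singular value decomposition, Cases 2 and 3), and then explicitly solving the resulting algebraic system (\ref{rr-1})--(\ref{rr-2}) for the coefficients $a^i_{kl}$ subject to the extra normalization (\ref{rank-1-8}) that pins the image direction to $a\otimes e_1$. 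Your proposal contains no substitute for this computation, and the phrase ``some fixed vector'' in your ansatz for the potential is precisely the unresolved point: you never exhibit a choice of coefficients for which both properties hold, so the proof of part (1), and hence of (c), is missing.

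By contrast, your treatment of part (2) is essentially the paper's: a compactly supported (or periodic-plus-cutoff) second primitive of the two-valued profile in the variable $b\cdot x$, multiplied by a cutoff, with the errors from the cutoff of size $O(\delta\tau^{-1})$, $O(\delta\tau^{-2})$ absorbed by taking $\delta$ small; this is fine modulo the missing operator. Note, however, that the ``main obstacle'' you single out (the tension between compact support and the two exact gradient values) is not the real difficulty: the segment $[-\lambda(A-B),(1-\lambda)(A-B)]$ contains $0$, so a compactly supported one-dimensional profile with second derivative ranging in $[-\lambda,1-\lambda]$ and attaining the two endpoint values on large sets exists outright (the paper takes $u^2\in C^\infty_c(0,1)$ directly, with no cutoff needed in the lamination variable). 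The substantive content of Theorem \ref{thm:rank-1} is the algebraic construction of $\Phi$, and that is the gap in your proposal.
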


Using this theorem, we deduce a preliminary result towards Theorems \ref{thm:main-1} and \ref{thm:main-2}. We remark that Lemma \ref{lem:key-lemma} is the spot where the two major tools of the paper, Theorems \ref{thm:main-lemma} and \ref{thm:rank-1}, meet. Note also that piecewise linear approximation scheme is not used here and below in Lemma \ref{lem:pre-result} (cf. \cite[Lemma 4.1]{MSv1}).

\begin{lem}\label{lem:key-lemma}
Assume that (\ref{assume-1}) is satisfied. Let $\Omega\subset\R^n$ be a bounded domain, let $V$ be an open set in $\Sigma_t$, and let $\xi\in V^{rc}$. Then for each $\epsilon>0$, there exists a map $\varphi\in C^{\infty}_c(\Omega;\R^m)$ such that
\[
\left\{\begin{array}{l}
         \xi+\nabla \varphi\in V^{rc}\quad\mbox{in $\Omega$}, \\
         \big|\{x\in\Omega\,|\,\xi+\nabla \varphi(x)\not\in V\}\big|<\epsilon|\Omega|, \\
         \|\varphi\|_{L^\infty(\Omega)}<\epsilon.
       \end{array}
\right.
\]
\end{lem}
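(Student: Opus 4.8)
The strategy is the standard convex integration iteration, but now carried out inside the flat manifold $\Sigma_t$ and using the smooth operator $\Phi$ from Theorem \ref{thm:rank-1} in place of piecewise linear building blocks. First, since $\xi\in V^{rc}$, by definition there is a compact set $K\subset V$ with $\xi\in K^{rc}$, and by Theorem \ref{thm:main-lemma} (together with Proposition \ref{prop:rank-one-1} and the identity $K^{rc}=\{\bar\mu\mid\mu\in\mathcal M^{rc}(K)\}$) there is a laminate $\nu\in\mathcal M^{rc}(K)$ with $\bar\nu=\xi$, and a sequence of laminates of finite order $\nu_j\in\mathfrak L(\OO)$ with $\bar\nu_j=\xi$ converging weakly* to $\nu$, where $\OO$ is any prescribed open set in $\Sigma_t$ with $K^{rc}\subset\OO\subset V^{rc}$. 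Choosing $j$ large, I obtain a single laminate of finite order, say $\mu=\sum_{i=1}^N \theta_i\delta_{\eta_i}$ with $\bar\mu=\xi$, all $\eta_i\in V$, such that $\mu$ approximates $\nu$ well enough that $\langle\mu,\mathrm{dist}(\cdot,V)\rangle$ is small; more precisely I want $\sum_{\eta_i\notin V}\theta_i$ small — but in fact all $\eta_i$ can be taken in $V$ since $\mathfrak L(\OO)$ with $\OO\subset V^{rc}$ need not land in $V$, so this is exactly the point where the ``mass escaping $V$'' term $\epsilon|\Omega|$ appears and one must be careful: one keeps $\OO$ with $\bar\OO\subset V$ using Corollary \ref{prop:sets} applied to $K=\{\xi\}$ — pick $W$ open in $\Sigma_t$ with $\bar W\subset V$, $\xi\in W^{rc}$, $\overline{W^{rc}}\subset V^{rc}$, and then run the laminate construction in an open $\OO$ with $W^{rc}\subset\OO\subset V$, forcing every $\eta_i\in V$.

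Second, I implement the laminate $\mu$ by finitely many steps of lamination. The induction is on the order of the laminate of finite order. At each elementary step one has a splitting $\eta_k = s\zeta_1+(1-s)\zeta_2$ with $[\zeta_1,\zeta_2]\subset\OO$ a rank-one segment, i.e. $\zeta_1-\zeta_2 = c\,a\otimes b$ with $\mathcal L(a\otimes b)=0$ (automatic since $\zeta_1,\zeta_2\in\Sigma_t$), and by hypothesis (\ref{assume-1}) we have $Lb\ne 0$, so Theorem \ref{thm:rank-1} applies with $A-B=\zeta_1-\zeta_2$ (rescaled so that $|b|=1$) and parameter $\lambda = 1-s$. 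On the subdomain $\Omega'$ where the current gradient equals $\xi+\nabla(\text{previous }\varphi)$ and is (locally) constant equal to $\eta_k$ — one arranges the construction so the gradient takes finitely many constant values on a family of disjoint open subdomains covering $\Omega$ up to a small remainder — I add $\Phi g_\tau$ supplied by Theorem \ref{thm:rank-1}(2): this is compactly supported in $\Omega'$, keeps $\mathcal L(\nabla(\cdot))=0$ so the gradient stays in $\Sigma_t$ (shifted by $\xi$, still in $\Sigma_t$), takes the two values $\eta_k+(1-s)(\zeta_1-\zeta_2)=\zeta_1$ and $\eta_k - s(\zeta_1-\zeta_2)=\zeta_2$ on open subsets $\Omega_A,\Omega_B$ of the right relative measures (up to $\tau$), stays within $\tau$ of the segment $[\zeta_2,\zeta_1]$ everywhere, and has $L^\infty$ norm $<\tau$. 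Choosing $\tau$ small at each of the finitely many steps (depending on how far $\OO$ is from $\partial|_{\Sigma_t} V^{rc}$, to guarantee the ``within $\tau$ of the segment'' gradients stay inside $V^{rc}$, and summing the $L^\infty$ errors to stay $<\epsilon$), after finitely many steps the gradient $\xi+\nabla\varphi$ takes the values $\eta_i$ on disjoint open sets of total relative measure close to $1$, lies in $\OO\subset V$ on those sets, and lies in $V^{rc}$ everywhere (because each intermediate value is within a controlled distance of a rank-one segment contained in $\OO$, hence in $V^{rc}$ which is open by Corollary \ref{prop:rank-1-hull-open}).

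Third, I collect the estimates: $\xi+\nabla\varphi\in V^{rc}$ a.e. in $\Omega$ by the openness of $V^{rc}$ and the distance bounds accumulated over the finite iteration; $|\{x : \xi+\nabla\varphi(x)\notin V\}|<\epsilon|\Omega|$ because the exceptional set is the union of the transition regions from Theorem \ref{thm:rank-1}(b) whose measures are controlled by the $\tau$'s and by items (c)–(d), which can be made to sum to less than $\epsilon|\Omega|$; and $\|\varphi\|_{L^\infty(\Omega)}<\epsilon$ by summing the $L^\infty$ bounds in (e). Smoothness and compact support of $\varphi$ follow since each $\Phi g_\tau\in C^\infty_c$ by (a), supports are nested inside the previous subdomains, and there are finitely many of them.

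**Main obstacle.** The delicate point is the bookkeeping that keeps every intermediate gradient value inside the \emph{open} set $V^{rc}$ while a laminate of finite order only guarantees its nodes $\eta_i$ lie in a chosen $\OO$ and the splitting midpoints/segments lie in $\OO$ too; one must fix $\OO$ (via Corollary \ref{prop:sets} / the sets $U_\epsilon$) with enough ``room'' so that a $\tau$-neighborhood of each rank-one segment used is still contained in $V^{rc}$, and then choose the $\tau$ at step $j$ small relative to that room \emph{and} small enough that the cumulative measure of transition zones and the cumulative $L^\infty$ error stay below the targets — a finite but somewhat intricate induction. The other subtlety, already flagged above, is ensuring the nodes actually lie in $V$ and not merely in $V^{rc}$, which is why one works with $W^{rc}\subset\OO\subset V$ rather than with $V^{rc}$ directly; the mass that fails to reach $V$ is then only the transition zones, giving the $\epsilon|\Omega|$ bound.
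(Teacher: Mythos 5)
Your overall architecture (represent $\xi$ as the barycenter of a laminate $\nu\in\mathcal M^{rc}(K)$, approximate $\nu$ by laminates of finite order via Theorem \ref{thm:main-lemma}, and implement each elementary rank-one splitting with the operator $\Phi$ of Theorem \ref{thm:rank-1}, by induction on the order) is exactly the paper's strategy. But the step you flag as the delicate one is handled incorrectly, and the error is not cosmetic. You propose to force \emph{all} nodes $\eta_i$ of the finite-order laminate into $V$ by choosing, via Corollary \ref{prop:sets}, an open $W$ with $\bar W\subset V$, $\xi\in W^{rc}$, and then "an open $\OO$ with $W^{rc}\subset\OO\subset V$". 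No such $\OO$ exists in general: Corollary \ref{prop:sets} only gives $\overline{(W^{rc})}\subset V^{rc}$, and $W^{rc}$ need not be contained in $V$ at all. Concretely, if $V$ is a small neighborhood in $\Sigma_0$ of the $T_4$ set (\ref{app2-1-K-set-definition}), then $W^{rc}$ contains the central square of the configuration, far outside $V$. Worse, the goal itself is unattainable: a laminate of finite order supported in $V$ has barycenter in the lamination convex hull of $V$, so whenever $\xi\in V^{rc}\setminus V^{lc}$ (the interesting case, and the whole reason rank-one convex hulls are used) there is \emph{no} finite-order laminate with all nodes in $V$ and barycenter $\xi$. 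Consequently your third-paragraph measure estimate, which attributes the exceptional set $\{x:\xi+\nabla\varphi(x)\notin V\}$ solely to the transition regions of Theorem \ref{thm:rank-1}(b), collapses: a fixed fraction of the mass of the laminate may sit on nodes outside $V$, and that fraction produces an exceptional set of proportional measure no matter how small you take the $\tau$'s.

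The paper closes this gap differently: it keeps $\OO=V^{rc}$, accepts that the nodes $\xi_j$ lie only in $V^{rc}$, and uses the weak* convergence $\nu_i\wcon\nu$ with $\operatorname{supp}\nu\subset K\subset V$ quantitatively. One tests against a cutoff $F\in C^\infty_c(\tilde V)$ with $0\le F\le1$ and $F\equiv1$ on $K$ (where $V=\tilde V\cap\Sigma_t$), so that for $i$ large $\sum_j\lambda_jF(\xi_j)>1-\epsilon/2$; hence the total weight of nodes lying outside $V$ is at most $\epsilon/2$. Choosing $\eta$ smaller than the distance from the in-$V$ nodes to $\partial|_{\Sigma_t}V$ (and than half the mutual node distances), the inductive Claim then says that the gradient lies within $\eta$ of each node on sets of measure $\lambda_j|\Omega|$ up to small error, and the exceptional set is bounded by the escaped mass plus these errors, giving $\epsilon|\Omega|$. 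You would need to replace your "all nodes in $V$" device with an argument of this type. A secondary, repairable inaccuracy: your induction assumes the current gradient is piecewise constant on subdomains covering most of $\Omega$; since the construction is kept smooth (no piecewise affine approximation), the correct mechanism is to subdivide into small dyadic cubes on which the gradient oscillates by less than $(\eta-\eta_i)/2$, freeze its value at the cube centers, and apply Theorem \ref{thm:rank-1} there, absorbing the oscillation into the $\eta$-neighborhood bookkeeping as the paper does.
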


\begin{proof}
As $\xi\in V^{rc}$, there exists a compact set $K\subset V$ such that $\xi\in K^{rc}=\{\bar\mu\,|\,\mu\in\mathcal{M}^{rc}(K)\}$. So $\xi=\bar\nu$ for some $\nu\in\mathcal{M}^{rc}(K)$. From Corollary  \ref{prop:rank-1-hull-open}, we see that $V^{rc}$ is  open in $\Sigma_t$. We thus can apply Theorem \ref{thm:main-lemma} to extract a sequence $\nu_k\in\mathfrak{L}(V^{rc})$ with $\bar{\nu}_k=\bar\nu=\xi$ that converges weakly* to $\nu$ in $\mathcal{P}$.

\textbf{\underline{Claim:}} For each $\mu\in\mathfrak{L}(V^{rc})$ of order $N-1\ge 0$ with $\mu=\sum_{j=1}^N\lambda_j\delta_{\xi_j}$, there exists an $\eta_\mu>0$ such that for each  $0<\eta<\eta_\mu$ and each $\epsilon>0$, there is a map $\varphi\in C^\infty_c(\Omega;\R^m)$ satisfying
\begin{equation}\label{pre-rank-1-approximation}
\left\{\begin{array}{l}
         \bar\mu+\nabla \varphi\in V^{rc}\quad\mbox{in $\Omega$}, \\
         \Big|\big|\{x\in\Omega\,|\,|
         \bar\mu+\nabla \varphi(x)-\xi_j|<\eta\}\big|-\lambda_j|\Omega|\Big|<\epsilon|\Omega|\quad\mbox{for all $1\le j\le N$},\\
         \|\varphi\|_{L^\infty(\Omega)}<\epsilon.
       \end{array}
\right.
\end{equation}

Suppose for the moment that Claim holds. Choose a function $F\in C^\infty_c(\tilde V)$ so that $0\le F\le 1$ in $\tilde V$ and $F\equiv 1$ on $K$, where $\tilde{V}$ is some open set in $\M^{m\times n}$ with $V=\tilde{V}\cap\Sigma_t$. Let $0<\epsilon\le 2$. Since
\[
\int_{\M^{m\times n}}F d\nu_k \to \int_{\M^{m\times n}}F d\nu=1\quad\mbox{as $k\to\infty$},
\]
we can choose an index $i\in\N$ so large that
\begin{equation}\label{weakstar-consequence}
0\le 1-\sum_{j=1}^N\lambda_j F(\xi_j)=1-\int_{\M^{m\times n}}F d\nu_i<\frac{\epsilon}{2},
\end{equation}
where $\nu_i=\sum_{j=1}^N\lambda_j\delta_{\xi_j}\in\mathfrak{L}(V^{rc})$ is of order $N-1\ge 0$. Set $J_V=\{j\in\{1,\cdots,N\}\,|\,\xi_j\in V\}.$
If $J_V=\emptyset$, then $1=1-\sum_{j=1}^N\lambda_j F(\xi_j)<\frac{\epsilon}{2}$, a contradiction. Thus $J_V\neq \emptyset$. Now, let
\begin{equation}\label{choice-eta}
0<\eta<\min\Big\{\min_{j\in J_V}\mathrm{dist}(\xi_j,\partial|_{\Sigma_t}V),\min_{j,k\in J_V,\,j\neq k}2^{-1}|\xi_j-\xi_k|,\eta_{\nu_i}\Big\},
\end{equation}
where the number $\eta_{\nu_i}>0$ is from the result of Claim above. It then follows from the result of Claim that there exists a map $\varphi\in C^\infty_c(\Omega;\R^m)$ such that
\begin{equation}\label{phi-function-result}
\left\{\begin{array}{l}
         \xi+\nabla \varphi=\bar\nu_i+\nabla \varphi\in V^{rc}\quad\mbox{in $\Omega$}, \\
         \big||\{x\in\Omega\,|\,|
         \xi+\nabla \varphi(x)-\xi_j|<\eta\}|-\lambda_j|\Omega|\big|<\frac{\epsilon}{2N}|\Omega|\quad\mbox{for all $1\le j\le N$},\\
         \|\varphi\|_{L^\infty(\Omega)}<\epsilon.
       \end{array}
\right.
\end{equation}
Thus, by (\ref{weakstar-consequence}), (\ref{choice-eta}) and (\ref{phi-function-result}), we have
\[
\begin{split}
|\{x\in & \Omega\,|\,\xi+\nabla \varphi(x)\not\in V\}|=|\Omega|- |\{x\in\Omega\,|\,\xi+\nabla \varphi(x)\in V\}| \\
\le & |\Omega|-|\Omega|\sum_{j\in J_V}\lambda_j F(\xi_j)+\sum_{j\in J_V}\lambda_j |\Omega| \\
& -\sum_{j\in J_V}\big|\{x\in\Omega\,|\,|
         \xi+\nabla \varphi(x)-\xi_j|<\eta\}\big|<\frac{\epsilon|\Omega|}{2}+\frac{\epsilon|\Omega|}{2} =\epsilon|\Omega|;
\end{split}
\]
hence the map $\varphi$ satisfies the required properties for the conclusion of the lemma.

It now remains to prove Claim above. Let us prove this by induction on the order $l\ge 0$ of a laminate $\mu=\sum_{j=1}^{l+1}\lambda_j\delta_{\xi_j}\in\mathfrak{L}(V^{rc})$.
If the order $l=0$, we can simply take $\varphi\equiv0$ in $\Omega$; then (\ref{pre-rank-1-approximation}) holds for all $\eta>0$ and $\epsilon>0$.

Next, assume  that the assertion holds for the order $l= k$, where $k\ge 0$ is an integer. Let $\mu=\sum_{j=1}^{k+2}\lambda_j\delta_{\xi_j}\in\mathfrak{L}(V^{rc})$ be of a laminate of order $l=k+1$. Reordering the indices $j$ in $\mu$ if necessary and setting
\[
\tilde\lambda_{k+1}=\lambda_{k+1}+\lambda_{k+2},\quad\tilde\lambda=\frac{\lambda_{k+1}}{\tilde{\lambda}_{k+1}}\in(0,1),
\]
\[
\tilde\xi_{k+1}=\tilde\lambda\xi_{k+1}+ (1-\tilde\lambda)\xi_{k+2},\quad\tilde\mu=\sum_{j=1}^k\lambda_j\delta_{\xi_j}+\tilde\lambda_{k+1}\delta_{\tilde\xi_{k+1}},
\]
it follows that $[\xi_{k+1},\xi_{k+2}]$ is a rank-one segment in $V^{rc}$ and that
\[
\mu=\tilde\mu-\tilde\lambda_{k+1}\delta_{\tilde\xi_{k+1}} +\tilde\lambda\tilde\lambda_{k+1}\delta_{\xi_{k+1}} +(1-\tilde\lambda)\tilde\lambda_{k+1}\delta_{\xi_{k+2}},
\]
where $\tilde\mu$ is a laminate of order $k$ in $V^{rc}$.
Let $\epsilon>0$ and
\[
\begin{split}
0<\eta<\frac{1}{2}\min\Big\{ & \min_{1\le i,j\le k+2,\, i\ne j}|\xi_i-\xi_j|,\min_{1\le j\le k}|\xi_j-\tilde\xi_{k+1}|,\\
& \mathrm{dist}([\xi_{k+1},\xi_{k+2}],\partial|_{\Sigma_t}V^{rc}),\eta_{\tilde\mu}\Big\}=:\eta_\mu,
\end{split}
\]
where the number $\eta_{\tilde\mu}>0$ is from the induction hypothesis.
By the induction hypothesis, there exists a map $\psi\in C^\infty_c(\Omega;\R^m)$ such that
\begin{equation}\label{property-v-function}
\left\{\begin{array}{l}
         \bar\mu+\nabla \psi=\bar{\tilde\mu}+\nabla \psi\in V^{rc}\quad\mbox{in $\Omega$}, \\
         \big||\{x\in\Omega\,|\,|\bar\mu+\nabla \psi(x)-\xi_j|<\eta\}|-\lambda_j|\Omega|\big|<\frac{\epsilon}{6(k+1)}|\Omega|\quad\mbox{for all $1\le j\le k$}, \\
         \big||\{x\in\Omega\,|\,|\bar\mu+\nabla \psi(x)-\tilde\xi_{k+1}|<\eta\}|-\tilde\lambda_{k+1}|\Omega|\big|<\frac{\epsilon}{6(k+1)}|\Omega|,\\
         \|\psi\|_{L^\infty(\Omega)}<\frac{\epsilon}{2}.
       \end{array}
\right.
\end{equation}
Set
\[
E_j=\{x\in\Omega\,|\,|\bar\mu+\nabla \psi(x)-\xi_j|<\eta\}\;\;(1\le j\le k),
\]
\[
\tilde{E}_{k+1}= \{x\in\Omega\,|\,|\bar\mu+\nabla \psi(x)-\tilde\xi_{k+1}|<\eta\},\quad F=\Omega\setminus(\cup_{j=1}^k E_j\cup\tilde{E}_{k+1});
\]
then from the choice of $\eta$, we see that $E_1,\cdots,E_k$ and $\tilde{E}_{k+1}$ are pairwise disjoint.
We now choose finitely many disjoint open cubes $Q_1,\cdots,Q_N\subset\subset\tilde{E}_{k+1}$, parallel to the axes, so that
\begin{equation}\label{truncation-tilde-E}
|\tilde{E}_{k+1}\setminus\cup_{i=1}^N Q_i|<\frac{\epsilon}{6}|\Omega|.
\end{equation}
Fix an index $i\in\{1,\cdots, N\}$, and set $\eta_i=\max_{x\in\bar{Q}_i}|\bar\mu+\nabla \psi(x)-\tilde{\xi}_{k+1}|<\eta$. Choose finitely many disjoint dyadic cubes $Q^1_i,\cdots,Q^{N_i}_i\subset Q_i$ with $|Q_i\setminus\cup_{j=1}^{N_i}Q^j_i|=0$ so small that
\begin{equation}\label{property-v-Lip}
|\nabla \psi(x)-\nabla \psi(y)|<\frac{\eta-\eta_i}{2}\quad\forall x,y\in\bar{Q}^{j}_i,\forall j=1,\cdots,N_i.
\end{equation}
Fix an index $j\in\{1,\cdots,N_i\}$. Let $x^j_i$ denote the center of the cube $Q^j_i$, and set $\xi^j_i=\bar\mu+\nabla \psi(x^j_i)\in V^{rc};$ then $|\xi^j_i-\tilde{\xi}_{k+1}|\le\eta_i$.
Since the matrix $L$ satisfies (\ref{assume-1}), $\mathrm{rank}(\xi_{k+1}-\xi_{k+2})=1$, and $\mathcal{L}(\xi_{k+1})=\mathcal{L}(\xi_{k+2})(=t)$, we can apply Theorem \ref{thm:rank-1} to the cube $Q^j_i$ and number $0<\tilde\lambda<1$ to obtain that for any given $\tau>0$, there exist a function $h^j_i\in C^\infty_c(Q^j_i;\R^m)$ and two disjoint open sets $Q^j_{i,k+1},Q^j_{i,k+2}\subset\subset Q^j_i$ satisfying
\begin{itemize}
\item[(a)] $h^j_i\in C^\infty_c(Q^j_i;\R^m)$, $\mathcal{L}(\nabla h^j_i)=0$ in $Q^j_i$,
\item[(b)] $\dist(\nabla h^j_i,[-\tilde\lambda(\xi_{k+1}-\xi_{k+2}),(1-\tilde\lambda)(\xi_{k+1}-\xi_{k+2})])<\tau$ in $Q^j_i$,
\item[(c)] $\nabla h^j_i(x)= \left\{\begin{array}{ll}
                                (1-\tilde\lambda)(\xi_{k+1}-\xi_{k+2}) & \mbox{$\forall x\in Q^j_{i,k+1}$}, \\
                                -\tilde\lambda(\xi_{k+1}-\xi_{k+2}) & \mbox{$\forall x\in Q^j_{i,k+2}$},
                              \end{array}\right.$
\item[(d)] $\big||Q^j_{i,k+1}|-\tilde\lambda|Q^j_i|\big|<\tau$, $\big||Q^j_{i,k+2}|-(1-\tilde\lambda)|Q^j_i|\big|<\tau$,
\item[(e)] $\|h^j_i\|_{L^\infty(Q^j_i)}<\tau$.
\end{itemize}
For our purpose, we choose
\begin{equation}\label{tau-choice}
0<\tau<\min\Big\{\eta,\frac{\epsilon}{3},\frac{\epsilon|\Omega|}{12(k+1)\sum_{i=1}^N N_i}\Big\}.
\end{equation}

We now define
\[
\varphi=\psi+\sum_{1\le i\le N,\,1\le j\le N_i} h^j_i\chi_{Q^j_i}\quad\mbox{in $\Omega$.}
\]
Let us check that $\varphi:\Omega\to\R^m$ is a desired function.
It is clear from the construction and (\ref{property-v-function}) that
\[
\mbox{$\varphi\in C^\infty_c(\Omega;\R^m)$.}
\]
Let $1\le i\le N$ and $1\le j\le N_i$. Note from (\ref{property-v-function}), (e) and (\ref{tau-choice}) that $|\varphi|<5\epsilon/6$ in $Q^j_i$; thus, from the definition of $\varphi$, we have
\[
\|\varphi\|_{L^\infty(\Omega)}<\epsilon.
\]
It follows from (a) and (\ref{property-v-function}) that for all $x\in Q^j_i$, we have $\mathcal{L}(\bar\mu+\nabla \varphi(x))=\mathcal{L}(\bar\mu+\nabla \psi(x))+\mathcal{L}(\nabla h^j_i(x))=\mathcal{L}(\bar\mu+\nabla \psi(x))=t$, i.e., $\bar\mu+\nabla \varphi(x)\in\Sigma_t$.
In addition,  we have from (\ref{property-v-Lip}), (b) and the choice $0<\tau<\eta<\eta_\mu$ that for all $x\in Q^j_i$,
\[
\begin{split}
\mathrm{dist} & (\bar\mu+\nabla \varphi(x),[\xi_{k+1},\xi_{k+2}]) \\
& =\dist(\bar\mu+\nabla \psi(x)-\xi^j_i+\xi^j_i-\tilde{\xi}_{k+1}+\tilde{\xi}_{k+1}+\nabla h^j_i(x),[\xi_{k+1},\xi_{k+2}]) \\
& \le |\bar\mu+\nabla \psi (x)-\xi^j_i|+|\xi^j_i-\tilde\xi_{k+1}|+\mathrm{dist}(\tilde\xi_{k+1}+\nabla h^j_i(x),[\xi_{k+1},\xi_{k+2}])  \\
& \le \frac{\eta-\eta_i}{2} +\eta_i +\mathrm{dist}(\nabla h^j_i (x),[-\tilde\lambda(\xi_{k+1}-\xi_{k+2}),(1-\tilde\lambda)(\xi_{k+1}-\xi_{k+2})]) \\
& <\eta+\tau<2\eta<\mathrm{dist}([\xi_{k+1},\xi_{k+2}],\partial|_{\Sigma_t}V^{rc}).
\end{split}
\]
Combining these two observations, we see that $\bar\mu+\nabla \varphi\in V^{rc}$ in $Q^j_i$, and thus from (\ref{property-v-function}) and  the definition of $u$, we have
\[
\bar\mu+\nabla \varphi\in V^{rc}\quad\mbox{in $\Omega$}.
\]

We now write
\[
G_l=\{x\in\Omega\,|\,|\bar\mu+\nabla \varphi(x)-\xi_l|<\eta\}\quad(1\le l\le k+2).
\]
Let $1\le i\le N$ and $1\le j \le N_i$.
By (c) and (\ref{property-v-Lip}), for all $x\in Q^j_{i,k+1}$, we have
\[
\begin{split}
|\bar\mu+ & \nabla \varphi  (x)-  \xi_{k+1}| =|\bar\mu+\nabla \psi(x)+\nabla h^j_i(x)-\xi_{k+1}| \\
& = |\bar\mu+\nabla \psi(x)-\xi^j_i+\xi^j_i-\tilde\xi_{k+1}+\tilde\xi_{k+1}+(1-\tilde\lambda)(\xi_{k+1}-\xi_{k+2})-\xi_{k+1}|\\
& = |\bar\mu+\nabla \psi(x)-\xi^j_i+\xi^j_i-\tilde\xi_{k+1}| \le \frac{\eta-\eta_i}{2}+\eta_i<\eta.
\end{split}
\]
Likewise,  we have $|\bar\mu+\nabla \varphi(x)-\xi_{k+2}|<\eta$ for all $x\in Q^j_{i,k+2}$.
Thus it follows from (\ref{property-v-function}), (d) and (\ref{tau-choice}) that for $1\le l\le k$,
\[
\begin{split}
\big| |G_l|-\lambda_l|\Omega|\big| & \le \big| |E_l|-\lambda_l|\Omega|\big|+\sum_{1\le i\le N,\,1\le j\le N_i}|Q^j_i\setminus(Q^j_{i,k+1}\cup Q^j_{i,k+2})| \\
& \le \frac{\epsilon|\Omega|}{6(k+1)}+2\tau\sum_{i=1}^N N_i<\frac{\epsilon|\Omega|}{3(k+1)}< \epsilon|\Omega|.
\end{split}
\]
It now remains to check that this inequality also holds for $l=k+1,k+2$.
Note from the above observation that
\[
\big| |G_{k+1}|-\lambda_{k+1}|\Omega| \big|
\le \left| \Big|\bigcup_{1\le i\le N,\,1\le j\le N_i}Q^j_{i,k+1}\Big|-\tilde\lambda \tilde\lambda_{k+1}|\Omega| \right|
\]
\[
+|F|+|\tilde{E}_{k+1}\setminus\cup_{i=1}^N Q_i|+\sum_{1\le i\le N,\,1\le j\le N_i}^N|Q^j_i\setminus(Q^j_{i,k+1}\cup Q^j_{i,k+2})|
\]
\[
=:I_1+I_2+I_3+I_4.
\]
By (d), (\ref{property-v-function}), (\ref{truncation-tilde-E}) and (\ref{tau-choice}), we can estimate:
\[
\begin{split}
I_1= & \Bigg| \Big|\bigcup_{1\le i\le N,\,1\le j\le N_i}Q^j_{i,k+1}\Big| - \tilde\lambda|\cup_{i=1}^N Q_i| -\tilde\lambda\tilde\lambda_{k+1}|\Omega|\\
& +\tilde\lambda|\tilde{E}_{k+1}|-\tilde\lambda|\tilde{E}_{k+1}| + \tilde\lambda|\cup_{i=1}^N Q_i| \Bigg|
\end{split}
\]
\[
\le \sum_{1\le i\le N,\,1\le j\le N_i}\big||Q^j_{i,k+1}|-\tilde\lambda|Q^j_i|\big| +\tilde\lambda\big| |\tilde{E}_{k+1}|-\tilde\lambda_{k+1}|\Omega| \big|+\tilde\lambda| \tilde{E}_{k+1}\setminus\cup_{i=1}^N Q_i |
\]
\[
< \tau\sum_{i=1}^N N_i+\frac{\tilde\lambda \epsilon|\Omega|}{6(k+1)} +\frac{\tilde\lambda\epsilon|\Omega|}{6}<\frac{\epsilon|\Omega|}{2},
\]
\[
\begin{split}
I_2+I_3+I_4 & \le \sum_{l=1}^k\big||E_l|-\lambda_l|\Omega| \big|+\big| |\tilde{E}_{k+1}|-\tilde\lambda_{k+1}|\Omega| \big|
+\frac{\epsilon|\Omega|}{6} + 2\tau\sum_{i=1}^N N_i\\
& <\frac{\epsilon|\Omega|}{6}+\frac{\epsilon|\Omega|}{6}+\frac{\epsilon|\Omega|}{6}=\frac{\epsilon|\Omega|}{2}.
\end{split}
\]
In all, we get $I_1+I_2+I_3+I_4<\epsilon|\Omega|$. In a similar manner, we also see that
\[
\big||G_{k+2}|-\lambda_{k+2}|\Omega| \big|<\epsilon|\Omega|.
\]

We have checked that the assertion holds for the laminate $\mu$ of order $k+1$, and the proof is now complete.
\end{proof}

As the last preparation for the proof of Theorems \ref{thm:main-1} and \ref{thm:main-2}, we improve the above lemma to deal with $C^1$ boundary data.

\begin{lem}\label{lem:pre-result}
Assume (\ref{assume-1}). Let $\Omega\subset\R^n$ be a bounded domain, let $U$ be a bounded open set in $\Sigma_t$, and let $v\in C^1(\bar\Omega;\R^m)$ be a map satisfying
\[
\nabla v(x)\in U^{rc}\;\;\mbox{for all $x\in\Omega$}.
\]
Then for each $\epsilon>0$, there exist a map $u\in C^1(\bar\Omega;\R^m)$ and an open set $\Omega'\subset\subset\Omega$ with $|\partial \Omega'|=0$ such that
\[\left\{
\begin{array}{ll}
  u(x)=v(x) & \mbox{for all $x$ near $\partial\Omega$}, \\
  \nabla u\in U^{rc} & \mbox{in $\Omega$}, \\
  \nabla u\in U & \mbox{in $\Omega'$}, \\
  |\Omega\setminus \Omega'|<\epsilon|\Omega|, \\
  \|u-v\|_{L^\infty(\Omega)}<\epsilon.
\end{array}\right.
\]
\end{lem}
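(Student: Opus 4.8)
The plan is to reduce to the constant-gradient situation of Lemma~\ref{lem:key-lemma} by a localization argument. Since $v\in C^1(\bar\Omega;\R^m)$, the set $K_0:=\nabla v(\bar\Omega)$ is a compact subset of $U^{rc}$, so Corollary~\ref{prop:sets} furnishes an open set $V$ in $\Sigma_t$ with $\bar V\subset U$, $K_0\subset V^{rc}$, and $\overline{(V^{rc})}\subset U^{rc}$. Because $V$ is bounded we have $V^{rc}\subset(\bar V)^{rc}$, and $(\bar V)^{rc}$ is compact by Proposition~\ref{prop:rank-one-1}; hence $\overline{(V^{rc})}$ is a compact subset of the set $U^{rc}$, which is open in $\Sigma_t$ by Corollary~\ref{prop:rank-1-hull-open}, and $\bar V$ is a compact subset of the open set $U$. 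Consequently there is a number $\delta>0$ such that every $\eta\in\Sigma_t$ with $\dist(\eta,\overline{(V^{rc})})<\delta$ lies in $U^{rc}$, and every $\eta\in\Sigma_t$ with $\dist(\eta,\bar V)<\delta$ lies in $U$. By uniform continuity of $\nabla v$ on $\bar\Omega$, fix $r>0$ with $|\nabla v(x)-\nabla v(y)|<\delta$ whenever $x,y\in\bar\Omega$ and $|x-y|<r$.

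Next I would cover most of $\Omega$ by small cubes. Fix finitely many pairwise disjoint open cubes $Q_1,\dots,Q_M$ with $\bigcup_i\overline{Q_i}\subset\Omega$, each of diameter less than $r$, and $|\Omega\setminus\bigcup_iQ_i|<\frac{\epsilon}{2}|\Omega|$. Let $x_i$ be the center of $Q_i$ and put $\xi_i:=\nabla v(x_i)\in K_0\subset V^{rc}$. Applying Lemma~\ref{lem:key-lemma} on the bounded domain $Q_i$ with the open set $V$, the matrix $\xi_i$, and a parameter $\epsilon'\in(0,\epsilon/4)$, I obtain $\varphi_i\in C^\infty_c(Q_i;\R^m)$ with $\xi_i+\nabla\varphi_i\in V^{rc}$ in $Q_i$, with $|B_i|<\epsilon'|Q_i|$ for the set $B_i:=\{x\in Q_i\,|\,\xi_i+\nabla\varphi_i(x)\notin V\}$, and with $\|\varphi_i\|_{L^\infty(Q_i)}<\epsilon'$. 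Define
\[
u:=v+\sum_{i=1}^M\varphi_i\,\chi_{Q_i}\quad\text{in }\Omega.
\]
Since each $\varphi_i$ is smooth with support compact in $Q_i$ and the $Q_i$ are disjoint, $u\in C^1(\bar\Omega;\R^m)$, $u=v$ in a neighborhood of $\partial\Omega$, and $\|u-v\|_{L^\infty(\Omega)}<\epsilon'<\epsilon$.

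For the gradient conditions I would argue pointwise on each $Q_i$ by writing $\nabla u(x)=\bigl(\xi_i+\nabla\varphi_i(x)\bigr)+\bigl(\nabla v(x)-\xi_i\bigr)$: the first bracket lies in $V^{rc}\subset\Sigma_t$, while the second lies in $\ker\LL$ (both $\nabla v(x)$ and $\xi_i$ lie in $\Sigma_t$) and has norm $<\delta$ since $|x-x_i|<r$. Hence $\nabla u(x)\in\Sigma_t$ with $\dist(\nabla u(x),\overline{(V^{rc})})<\delta$, so $\nabla u(x)\in U^{rc}$; if moreover $x\notin B_i$, the first bracket lies in $\bar V$, so in fact $\nabla u(x)\in U$. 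Off $\bigcup_iQ_i$ one has $\nabla u=\nabla v\in U^{rc}$. Thus $\nabla u\in U^{rc}$ throughout $\Omega$, and $\nabla u\in U$ on the open set $G_i:=Q_i\setminus B_i$, which satisfies $|Q_i\setminus G_i|<\epsilon'|Q_i|$.

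It remains to produce $\Omega'$ meeting the null-boundary requirement. Rather than take $\Omega'=\bigcup_iG_i$ (whose boundary need not be Lebesgue-null), inside each open set $G_i$ I would inscribe finitely many disjoint open cubes with union $G_i'$ such that $|G_i\setminus G_i'|<\frac{\epsilon}{4}|Q_i|$, and set $\Omega'=\bigcup_iG_i'$. Then $\Omega'$ is a finite union of open cubes, so $\partial\Omega'$ is contained in finitely many coordinate-hyperplane pieces and $|\partial\Omega'|=0$; also $\Omega'\subset\bigcup_iQ_i\subset\subset\Omega$; $\nabla u\in U$ on $\Omega'$; and
\[
|\Omega\setminus\Omega'|\le|\Omega\setminus\textstyle\bigcup_iQ_i|+\sum_i\bigl(|B_i|+|G_i\setminus G_i'|\bigr)<\tfrac{\epsilon}{2}|\Omega|+\epsilon'|\Omega|+\tfrac{\epsilon}{4}|\Omega|<\epsilon|\Omega|,
\]
using $\sum_i|Q_i|\le|\Omega|$ and $\epsilon'<\epsilon/4$. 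The bookkeeping is routine; the two points that need care are (i) controlling that the error term $\nabla v-\xi_i$ can never push $\nabla u$ out of $U^{rc}$ (nor out of $U$ on the good set), which is precisely where the compactness input of Proposition~\ref{prop:rank-one-1} and Corollary~\ref{prop:sets} supplies a uniform safety margin $\delta$, and (ii) arranging the condition $|\partial\Omega'|=0$, for which the ``inscribe finitely many cubes'' device is the expected remedy.
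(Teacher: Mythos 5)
Your overall scheme---cover most of $\Omega$ by small cubes, freeze the gradient at the center of each cube, perturb there by Lemma \ref{lem:key-lemma}, use a compactness margin coming from Corollary \ref{prop:sets} to absorb the oscillation of $\nabla v$, and finally inscribe sets with negligible boundary to produce $\Omega'$---is exactly the paper's. But your opening step has a genuine gap: you assert that $K_0:=\nabla v(\bar\Omega)$ is a compact subset of $U^{rc}$. The hypothesis only gives $\nabla v(x)\in U^{rc}$ for $x$ in the \emph{open} set $\Omega$; since $U^{rc}$ is open in $\Sigma_t$, continuity of $\nabla v$ up to $\bar\Omega$ only yields $\nabla v(\partial\Omega)\subset\overline{U^{rc}}$, and the boundary values may land on $\partial|_{\Sigma_t}U^{rc}$. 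In that case $K_0\not\subset U^{rc}$, Corollary \ref{prop:sets} cannot be applied to $K_0$, and your single set $V$, the uniform margin $\delta$, and the diameter bound $r$ are all unavailable. This is not a hypothetical defect: in the proofs of Theorems \ref{thm:main-1} and \ref{thm:main-2} the lemma is applied on subdomains such as $\Omega^{(0)}=\Omega\setminus\Gamma$, where $\Gamma$ is precisely the set on which $\nabla v\in\partial|_{\Sigma_t}U^{rc}$, so the gradient does attain relative-boundary values of $U^{rc}$ on $\partial\Omega^{(0)}$; your compactness claim fails in exactly the situation the lemma is meant to serve.

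The repair is a reordering of your quantifiers, and it is what the paper does: first choose the finitely many disjoint open cubes $Q_i$ with $\bar Q_i\subset\Omega$ and small complementary measure; then, for each $i$, the set $\nabla v(\bar Q_i)$ \emph{is} a compact subset of $U^{rc}$ (because $\bar Q_i\subset\Omega$), so Corollary \ref{prop:sets} yields a set $V_i$ with $\bar V_i\subset U$, $\overline{(V_i^{rc})}\subset U^{rc}$, and a margin $\delta_i>0$ depending on $i$; finally subdivide $Q_i$ into dyadic subcubes on which the oscillation of $\nabla v$ is less than $\delta_i/2$ and apply Lemma \ref{lem:key-lemma} on each subcube at the gradient value at its center. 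Since $\delta_i$ is only known after $V_i$, the fine subdivision must come after this choice, not before, which is why your ``fix $r$ first, then take cubes of diameter $<r$'' order cannot be kept. From that point on your argument---the decomposition $\nabla u=(\xi+\nabla\varphi)+(\nabla v-\xi)$ with the second summand in $\ker\mathcal{L}$ and of norm below the margin, the conclusion $\nabla u\in U^{rc}$ everywhere and $\nabla u\in U$ off the small bad sets, and the inscription of finitely many cubes (or open sets with $|\partial H|=0$, as in the paper) to get $\Omega'$ with $|\partial\Omega'|=0$---is correct and coincides with the paper's proof, including the measure bookkeeping.
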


\begin{proof}
Let $\epsilon>0$.
Choose finitely many disjoint open cubes $Q_1,\cdots, Q_{N}\subset\subset\Omega$, parallel to the axes, such that
\begin{equation}\label{set-truncation}
|\Omega \setminus\cup_{i=1}^{N}Q_i|<\frac{\epsilon}{3}|\Omega|.
\end{equation}
Fix an index $1\le i\le N$. As $\nabla v\in U^{rc}$ on $\bar{Q}_i$, we can use Corollary \ref{prop:sets} to choose an open set $V_i$ in $\Sigma_t$ with $\bar{V}_i\subset U$ such that
\[
\nabla v\in V_i^{rc}\;\;\mbox{on $\bar{Q}_i$}\quad\mbox{and}\quad \overline{(V_i^{rc})}\subset U^{rc}.
\]
Set
\[
\delta_i=\min\Big\{\mathrm{dist}(\partial|_{\Sigma_t}V_i^{rc},\partial|_{\Sigma_t}U^{rc}), \mathrm{dist}(\partial|_{\Sigma_t}V_i,\partial|_{\Sigma_t} U)\Big\}>0.
\]
Then divide $Q_i$ into finitely many disjoint dyadic cubes $Q_{i,1},\cdots,Q_{i,N_{i}}$ whose union has measure $|Q_i|$ and such that
\begin{equation}\label{grad-u-tilde-perturbation}
|\nabla v(x)-\nabla v(y)|<\frac{\delta_i}{2}
\end{equation}
for all $x,y\in Q_{i,j}$ and $j=1,\cdots, N_{i}.$
Now, fix an index $1\le j\le N_{i},$  let $x_{i,j}$ denote the center of the cube $Q_{i,j}$, and set $\xi_{i,j}=\nabla v(x_{i,j})\in V_i^{rc}.$ Then we apply Lemma \ref{lem:key-lemma} to obtain a map $\varphi_{i,j}\in C^\infty_c(Q_{i,j};\R^m)$ such that
\begin{equation}\label{phi-function-properties}
\left\{\begin{array}{l}
  \xi_{i,j}+\nabla\varphi_{i,j}\in V_i^{rc}\quad\mbox{in $Q_{i,j}$,} \\
  \big|\{x\in Q_{i,j}\,|\, \xi_{i,j}+\nabla\varphi_{i,j}(x)\not\in V_i\}\big|<\frac{\epsilon}{3} |Q_{i,j}|,\\
  \|\varphi_{i,j}\|_{L^\infty(Q_{i,j})}<\epsilon.
\end{array}\right.
\end{equation}

Define
\[
u=v+\sum_{1\le i\le N,\,1\le j\le N_{i}} \varphi_{i,j}\chi_{Q_{i,j}} \quad\mbox{in $\Omega$.}
\]
Then by (\ref{phi-function-properties}) and the definition of $u$, we have
\[
u\in C^1(\bar\Omega;\R^m),\;\; u(x)=v(x)\;\mbox{for all $x$ near $\partial\Omega$},\;\;\mbox{and}\;\;\|u-v\|_{L^\infty(\Omega)}<\epsilon.
\]
Let $1\le i\le N$ and $1\le j\le N_{i}$.
Then for all $x\in Q_{i,j}$, we have $\xi_{i,j}+\nabla \varphi_{i,j}(x)\in V_i^{rc}$ and
\begin{equation}\label{perturbation-inclusion}
|\nabla u(x)-(\xi_{i,j}+\nabla \varphi_{i,j}(x))|=|\nabla v(x)-\xi_{i,j}|<\frac{\delta_i}{2}\quad\mbox{(by (\ref{grad-u-tilde-perturbation}))};
\end{equation}
thus from the definition of $\delta_i$, we get $\nabla u(x)\in U^{rc}$. By the definition of $u$, we now see that
\[
\nabla u(x)\in U^{rc}\quad\mbox{for all $x\in\Omega$}.
\]
We write
\[
E_{i,j}=\{x\in Q_{i,j}\,|\, \xi_{i,j}+\nabla\varphi_{i,j}(x)\not\in V_i\},\quad G_{i,j}=Q_{i,j}\setminus E_{i,j};
\]
then $|E_{i,j}|<\frac{\epsilon}{3}|Q_{i,j}|$, and $G_{i,j}$ is an open set in $Q_{i,j}$.
If $x\in G_{i,j}$, then $\xi_{i,j}+\nabla\varphi_{i,j}(x)\in V_i$, and (\ref{perturbation-inclusion}) holds;
thus $\nabla u(x)\in U$. We now choose an open subset $H_{i,j}$ of $G_{i,j}$ such that
\begin{equation}\label{set-H-selection}
|G_{i,j}\setminus H_{i,j}|<\frac{\epsilon}{3}|Q_{i,j}|\quad\mbox{and}\quad |\partial H_{i,j}|=0.
\end{equation}
Let
\[
\Omega'=\bigcup_{1\le i\le N,\,1\le j\le N_{i}} H_{i,j};
\]
then $\Omega'\subset\subset\Omega$, $|\partial \Omega'|=0$, and
\[
\nabla u(x)\in U\quad\mbox{for all $x\in \Omega'$}.
\]
Moreover,  we have from (\ref{set-truncation}), (\ref{phi-function-properties}) and (\ref{set-H-selection})  that
\[
\begin{split}
|\Omega\setminus\Omega'|= & |\Omega\setminus\cup_{i=1}^{N}Q_i|+\sum_{1\le i\le N,\, 1\le j\le N_{i}} |E_{i,j}|\\
& +\sum_{1\le i\le N,\, 1\le j\le N_{i}} |G_{i,j}\setminus H_{i,j}|<\epsilon|\Omega|.
\end{split}
\]

The proof is now complete.
\end{proof}

\section{Proof of main theorems}\label{sec:proof-main-theorems}
We are now ready to prove Theorems \ref{thm:main-1} and \ref{thm:main-2} by iteration of the result of Lemma \ref{lem:pre-result} in suitable ways. We first finish the proof of Theorem \ref{thm:main-1} using a relatively simple iteration scheme.

\begin{proof}[Proof of Theorem \ref{thm:main-1}]
We only consider the case that $v\in C^1(\bar\Omega;\R^m)$ as the general case that $v$ is piecewise $C^1$ can be handled by following  the proof below for countably many disjoint open subsets on which $v$ is $C^1$ up to the boundary.     Now, by assumption, we have $\nabla v\in\overline{(U^{rc})}=U^{rc}\cup\partial|_{\Sigma_t} U^{rc}$ in $\Omega$ and $|\Gamma|=0$, where $\Gamma:=\{x\in\Omega\,|\, \nabla v(x)\in \partial|_{\Sigma_t} U^{rc}\}.$ So $\Omega':=\Omega\setminus\Gamma$ is an open subset of $\Omega$ with $|\Omega\setminus\Omega'|=0$. Fix an $0<\epsilon\ll 1$.

We write $\Omega^{(0)}=\Omega'$ and $\tilde{u}^{(0)}=v$ in $\Omega^{(0)}$. Since $\tilde{u}^{(0)}\in C^1(\bar{\Omega}^{(0)};\R^m)$ and $\nabla\tilde{u}^{(0)}\in U^{rc}$ in $\Omega^{(0)}$, we can apply Lemma \ref{lem:pre-result} to find a map $\tilde{u}^{(1)}\in C^1(\bar\Omega^{(0)};\R^m)$ and an open set $G^{(0)}\subset\subset\Omega^{(0)}$ with $|\partial G^{(0)}|=0$ such that setting $\Omega^{(1)}=\Omega^{(0)}\setminus\bar{G}^{(0)}$, we have
\[
\left\{
\begin{array}{ll}
    \tilde{u}^{(1)}(x)=\tilde{u}^{(0)}(x) & \mbox{for all $x$ near $\partial\Omega^{(0)}$}, \\
    \nabla \tilde{u}^{(1)}\in U^{rc} & \mbox{in $\Omega^{(0)}$}, \\
    \nabla \tilde{u}^{(1)}\in U & \mbox{in $G^{(0)}$}, \\
    |\Omega^{(1)}|<\epsilon|\Omega^{(0)}|, & \\
    \|\tilde{u}^{(1)}-\tilde{u}^{(0)}\|_{L^\infty(\Omega^{(0)})}<\frac{\epsilon}{2\cdot 2}.
  \end{array}\right.
\]
Since $\tilde{u}^{(1)}\in C^1(\bar{\Omega}^{(1)};\R^m)$ and $\nabla\tilde{u}^{(1)}\in U^{rc}$ in $\Omega^{(1)}$, we can also apply Lemma \ref{lem:pre-result} to obtain a map $\tilde{u}^{(2)}\in C^1(\bar\Omega^{(1)};\R^m)$ and an open set $G^{(1)}\subset\subset\Omega^{(1)}$ with $|\partial G^{(1)}|=0$ such that letting $\Omega^{(2)}=\Omega^{(1)}\setminus\bar{G}^{(1)}$, we have
\[
\left\{
\begin{array}{ll}
    \tilde{u}^{(2)}(x)=\tilde{u}^{(1)}(x) & \mbox{for all $x$ near $\partial\Omega^{(1)}$}, \\
    \nabla \tilde{u}^{(2)}\in U^{rc} & \mbox{in $\Omega^{(1)}$}, \\
    \nabla \tilde{u}^{(2)}\in U & \mbox{in $G^{(1)}$}, \\
    |\Omega^{(2)}|<\epsilon|\Omega^{(1)}|, & \\
    \|\tilde{u}^{(2)}-\tilde{u}^{(1)}\|_{L^\infty(\Omega^{(1)})}<\frac{\epsilon}{2\cdot 2^2}.
  \end{array}\right.
\]
Repeating this process indefinitely, we obtain a sequence of open sets $\Omega^{(0)}\supset\Omega^{(1)}\supset\Omega^{(2)}\supset\cdots$, a sequence of open sets $G^{(k)}\subset\subset\Omega^{(k)}$  with $|\partial G^{(k)}|=0$ $(k=0,1,2,\cdots)$, and a sequence of maps $\tilde{u}^{(k+1)}\in C^1(\bar\Omega^{(k)};\R^m)$ $(k=0,1,2,\cdots)$ such that for every integer $k\ge 0$,
\[
\left\{
\begin{array}{ll}
  \Omega^{(k+1)}=\Omega^{(k)}\setminus\bar{G}^{(k)}, &  \\
  \tilde{u}^{(k+1)}(x)=\tilde{u}^{(k)}(x) &  \mbox{for all $x$ near $\partial\Omega^{(k)}$}, \\
  \nabla \tilde{u}^{(k+1)}\in U^{rc} & \mbox{in $\Omega^{(k)}$}, \\
  \nabla \tilde{u}^{(k+1)}\in U & \mbox{in $G^{(k)}$}, \\
  |\Omega^{(k)}|<\epsilon^k|\Omega| & \mbox{if $k\ge 1$}, \\
  \|\tilde{u}^{(k+1)}-\tilde{u}^{(k)}\|_{L^\infty(\Omega^{(k)})}<\frac{\epsilon}{2\cdot 2^{k+1}}.
\end{array}
\right.
\]
Let
\[
u^{(1)}=\left\{\begin{array}{ll}
          \tilde{u}^{(1)} & \mbox{in $\Omega^{(0)}$}, \\
          v & \mbox{in $\Omega\setminus\Omega^{(0)}$},
        \end{array}\right.
\]
and for each $k\in\N$, define
\[
u^{(k+1)}=\sum_{j=1}^k \tilde{u}^{(j)}\chi_{\bar{G}^{(j-1)}}+\tilde{u}^{(k+1)}\chi_{\Omega^{(k)}}+v\chi_{\Omega\setminus\Omega^{(0)}}\;\;\mbox{in $\Omega$}.
\]
Then for all $k\in\N$, we have $u^{(k)}\in C^1(\bar\Omega;\R^m)$, $u^{(k)}=v$ near $\partial\Omega$, and $\nabla u^{(k)}\in U^{rc}$ a.e. in $\Omega.$
Let
\[
u=\sum_{j=1}^\infty \tilde{u}^{(j)}\chi_{\bar{G}^{(j-1)}}+v\chi_{\Omega\setminus\Omega^{(0)}}\;\;\mbox{in $\Omega$}.
\]
Since $u^{(k)}$ $(k\in\N)$ are uniformly Lipschitz in $\Omega$ and $u^{(k)}\to u$ a.e. in $\Omega$ as $k\to\infty$, it follows that $u\in v+W^{1,\infty}_0(\Omega;\R^m)$. Note also that $\nabla u\in U$  in $\cup_{j=0}^\infty G^{(j)}$, where $|\cup_{j=0}^\infty G^{(j)}|=|\Omega|$, and that
\[
\|u-v\|_{L^\infty(\Omega)}\le\sum_{k=0}^\infty \frac{\epsilon}{2\cdot 2^{k+1}}=\frac{\epsilon}{2}<\epsilon.
\]

The proof is now complete.
\end{proof}

The proof of Theorem \ref{thm:main-2} relies on a more subtle iteration of the result of Lemma \ref{lem:pre-result}. We remark that the result of Theorem \ref{thm:main-1} cannot be used directly to prove Theorem \ref{thm:main-2} (cf.  \cite[Proof of Theorem 1.3]{MSv1}).

\begin{proof}[Proof of Theorem \ref{thm:main-2}]
Again we only consider the case that $v\in C^1(\bar\Omega;\R^m)$ as the piecewise $C^1$ case can be adapted easily from the simpler case.

For each $j\in\N$, let
\[
\Omega_j=\{x\in\Omega\,|\, \mathrm{dist}(x,\partial\Omega)>2^{-j}\}.
\]
Let $\rho\in C^{\infty}_c(\R^n)$ denote the standard mollifier, and for each $\epsilon>0$, let $\rho_\epsilon(x)=\epsilon^{-n}\rho(x/\epsilon)$ for all $x\in\R^n$.

Let
\[
\Omega^{(1)}=\{x\in\Omega\,|\,\nabla v(x)\in U_1\};
\]
then $\Omega^{(1)}$ is an open subset of $\Omega$ with $|\Omega\setminus\Omega^{(1)}|=0$. Let us write $u^{(1)}=v$ in $\Omega^{(1)}$, and fix any two numbers $\epsilon>0$ and  $0<\delta_1< 1$.

Choose an $0<\epsilon_1<2^{-1}$ such that
\[
\|\rho_{\epsilon_1}\ast \nabla u^{(1)}-\nabla u^{(1)} \|_{L^\infty(\Omega_1)}<2^{-1}.
\]
Let
\[
\delta_2=\min\{2^{-2}\epsilon,\delta_1\epsilon_1/2\}.
\]
Since $\nabla u^{(1)}\in U_1\subset U_2^{rc}$ in $\Omega^{(1)}$, it follows from Lemma \ref{lem:pre-result} that there exist a map $u^{(2)}\in C^1(\bar\Omega^{(1)};\R^m)$ and an open set $\Omega^{(2)}\subset\subset\Omega^{(1)}$ with $|\partial\Omega^{(2)}|=0$ such that
\[\left\{
\begin{array}{ll}
  u^{(2)}(x)=u^{(1)}(x)=v(x) & \mbox{for all $x$ near $\partial\Omega^{(1)}$}, \\
  \nabla u^{(2)}\in U_2^{rc} & \mbox{in $\Omega^{(1)}$}, \\
  \nabla u^{(2)}\in U_2 & \mbox{in $\Omega^{(2)}$}, \\
  |\Omega^{(1)}\setminus \Omega^{(2)}|<\delta_2|\Omega^{(1)}|, \\
  \|u^{(2)}-u^{(1)}\|_{L^\infty(\Omega^{(1)})}<\delta_2.
\end{array}\right.
\]
Next, choose an $0<\epsilon_2<\min\{\epsilon_1,2^{-2}\}$ such that
\[
\|\rho_{\epsilon_2}\ast \nabla u^{(2)}-\nabla u^{(2)} \|_{L^\infty(\Omega_2)}<2^{-2}.
\]
Let
\[
\delta_3=\min\{2^{-3}\epsilon,\delta_2\epsilon_2/2\}.
\]
Since $\nabla u^{(2)}\in U_2^{rc}\subset U_3^{rc}$ in $\Omega^{(1)}$, it follows again from Lemma \ref{lem:pre-result} that there exist  a map $u^{(3)}\in C^1(\bar\Omega^{(1)};\R^m)$ and an open set $\Omega^{(3)}\subset\subset\Omega^{(1)}$ with $|\partial\Omega^{(3)}|=0$ such that
\[\left\{
\begin{array}{ll}
  u^{(3)}(x)=u^{(2)}(x)=v(x) & \mbox{for all $x$ near $\partial\Omega^{(1)}$}, \\
  \nabla u^{(3)}\in U_3^{rc} & \mbox{in $\Omega^{(1)}$}, \\
  \nabla u^{(3)}\in U_3 & \mbox{in $\Omega^{(3)}$}, \\
  |\Omega^{(1)}\setminus \Omega^{(3)}|<\delta_3|\Omega^{(1)}|, \\
  \|u^{(3)}-u^{(2)}\|_{L^\infty(\Omega^{(1)})}<\delta_3.
\end{array}\right.
\]
Repeating this process indefinitely,  we obtain a sequence $\{u^{(j)}\}_{j=2}^\infty$ in $C^1(\bar\Omega^{(1)};\R^m)$, a sequence of open sets $\Omega^{(j)}\subset\subset\Omega^{(1)}$ with $|\partial\Omega^{(j)}|=0$ $(j\ge 2)$, and a decreasing sequence $\{\epsilon_j\}_{j=1}^\infty$ in $(0,1/2)$ with $0<\epsilon_j<2^{-j}$ such that  for every integer $j\ge 2$, we have
\[\left\{
\begin{array}{l}
  \delta_j:=\min\{2^{-j}\epsilon,\delta_{j-1}\epsilon_{j-1}/2\},  \\
  \|\rho_{\epsilon_j}\ast \nabla u^{(j)}-\nabla u^{(j)} \|_{L^\infty(\Omega_j)}<2^{-j},  \\
  u^{(j)}(x)=v(x)\quad \mbox{for all $x$ near $\partial\Omega^{(1)}$}, \\
  \nabla u^{(j)}\in U_j^{rc}\quad \mbox{in $\Omega^{(1)}$}, \\
  \nabla u^{(j)}\in U_j\quad \mbox{in $\Omega^{(j)}$}, \\
  |\Omega^{(1)}\setminus \Omega^{(j)}|<\delta_j|\Omega|, \\
  \|u^{(j)}-u^{(j-1)}\|_{L^\infty(\Omega^{(1)})}<\delta_j.
\end{array}\right.
\]
We then extend $u^{(j)}\equiv v$ on $\Omega\setminus\Omega^{(1)}$ for all $j\ge 1$.

Since $\sum_{j=2}^\infty\delta_j\le\frac{\epsilon}{2}<\infty$ and $U^{rc}_j$ $(j\in\N)$ are uniformly bounded, we have
\[
\mbox{$\|u^{(j)}-u\|_{L^\infty(\Omega)}\to 0$\;\;\;for some\;\;$u\in v+W^{1,\infty}_0(\Omega;\R^m)$}
\]
and
\[
\|u-v\|_{L^\infty(\Omega)}\le\frac{\epsilon}{2}<\epsilon.
\]
It now remains to show that $\nabla u\in K$ a.e. in $\Omega$.
Note
\[
\begin{split}
\|\nabla u^{(j)}-\nabla u\|_{L^1(\Omega)}\le & \|\nabla u^{(j)}-\nabla u\|_{L^1(\Omega\setminus\Omega_j)} +\|\nabla u^{(j)}-\rho_{\epsilon_j}\ast\nabla u^{(j)}\|_{L^1(\Omega_j)} \\
& +\|\rho_{\epsilon_j}\ast(\nabla u^{(j)}-\nabla u)\|_{L^1(\Omega_j)} +\|\rho_{\epsilon_j}\ast\nabla u-\nabla u\|_{L^1(\Omega_j)} \\
=:&  I_{1,j}+I_{2,j}+I_{3,j}+I_{4,j}.
\end{split}
\]
As $j\to\infty$,
\[
\begin{split}
I_{1,j}\le & C|\Omega\setminus\Omega_j|\to 0, \\
I_{2,j}\le & 2^{-j}|\Omega|\to 0,\\
I_{3,j}\le & \|\nabla\rho_{\epsilon_j}\ast(u^{(j)}- u)\|_{L^1(\Omega_j)}\le \frac{C}{\epsilon_j}\sum_{i=j+1}^\infty \delta_i \le  \frac{C}{2}\sum_{i=j}^\infty\delta_i\to 0, \\
I_{4,j}\le & \|\rho_{\epsilon_j}\ast\nabla u-\nabla u\|_{L^1(\Omega)} \to 0,\;\;\mbox{with $\nabla u:=0$ outside $\Omega$}.
\end{split}
\]
Thus after passing to a subsequence if necessary, we have $\nabla u^{(j)}\to\nabla u$ a.e. in $\Omega.$  We now claim that for a.e. $x\in\Omega$, we have $\nabla u^{(j)}(x)\in U_j$ for infinitely many indices $j\in\N.$ Suppose on the contrary that there is a set $N\subset\Omega$ of positive measure such that for each $x\in N$, we have $\nabla u^{(j)}(x) \in U_j$ for only finitely many indices $j\in\N$. For each $k\in\N$, let
\[
N_k=\{x\in N\,|\, \nabla u^{(j)}(x)\not\in U_j\;\;\mbox{for all $j> k$}\};
\]
then $N_1\subset N_2\subset\cdots$ and $N=\cup_{k\in\N}N_k$. Choose a $k_0\in\N$ so large that $|N_{k_0}|\ge|N|/2>0.$ Then for all $x\in N_{k_0}$, we have $\nabla u^{(j)}(x)\not\in U_j$ for all $j>k_0$. By the construction above, we thus have
\[
N_{k_0}\subset \Omega\setminus\Omega^{(j)}\quad\forall j>k_0.
\]
As $|\Omega\setminus\Omega^{(j)}|<\delta_j\to 0$ as $j\to\infty$, we have $|N_{k_0}|=0$, a contradiction. Thus, for a.e. $x\in\Omega$, we have that $\nabla u^{(j)}(x)\to\nabla u(x)$ in $\Sigma_t$ and that there is an increasing sequence $\{j_k\}_{k\in\N}$ in $\N$ such that
\[
\nabla u^{(j_k)}(x)\in U_{j_k}\quad\forall k\in \N.
\]
Since $\{U_j\}_{j\in\N}$ is an in-approximation of $K$ in $\Sigma_t$, we now have for such an $x\in\Omega$ that
\[
\nabla u(x)\in K;
\]
hence $\nabla u\in K$ a.e. in $\Omega$.

The proof is now complete.
\end{proof}

\section{Proof of the applications}\label{sec:proof-applications}

In this section, we finish the proof of Theorem \ref{thm:eikonal} and Corollary \ref{coro:T4-configuration}.

\subsection{Proof of Theorem \ref{thm:eikonal}}

We first prove Theorem \ref{thm:eikonal} under an additional hypothesis that the linear map $L:\R^n\to\R^m$ is injective, where $m\ge n\ge 2$. We follow the notations in the beginning of the full proof below. Fix an integer $k_0\ge 1$ so large that
\[
\eta\in\{\xi\in U\,|\, \mathrm{dist}(\xi,K)>1/k_0\}=:U_1.
\]
For $k=2,3,\cdots,$ define
\[
U_k=\bigg\{\xi\in U\,|\, \frac{1}{k_0+k+1}<\mathrm{dist}(\xi,K)<\frac{1}{k_0+k}\bigg\}.
\]
Then it is easy to see that $\{U_k\}_{k\in\N}$ is an in-approximation of $K$ in $\Sigma_t$ with $\nabla v_{\eta,\gamma}=\eta\in U_1$. Therefore, the result follows from Theorem \ref{thm:main-2}. However, as explained in Introduction, we should perform a more careful justification for the general case without such an additional assumption. We adopt the Baire category framework for the proof below.

\begin{proof}[Proof of Theorem \ref{thm:eikonal}]
Set
\[
\Sigma_t=\{\xi\in\M^{m\times n}\,|\, \mathcal{L}(\xi)=t\}\;\;\mbox{and}\;\;B_t=\{\xi\in\Sigma_t\,|\,|\xi|<1\};
\]
then $\eta\in B_t$ and $\eta^\pm_{a\otimes b}\in\partial|_{\Sigma_t}B_t$.
For each $\alpha>0$, let
\[
V_\alpha=\Big\{\xi\in\Sigma_t\,\big|\, \mathrm{dist}\big(\xi,\overline{\eta^+_{a\otimes b}\eta^-_{a\otimes b}}\big)<\alpha\Big\},
\]
where $\overline{\eta^+_{a\otimes b}\eta^-_{a\otimes b}}$ denotes the straight line in $\Sigma_t$ passing through $\eta^\pm_{a\otimes b}$. Choose an $\alpha_\epsilon>0$ so small that $\bar{V}_{\alpha_\epsilon}\cap\partial|_{\Sigma_t}B_t$ is the disjoint union of two connected sets $K^\pm$ with $\eta^\pm_{a\otimes b}\in K^\pm$ such that $\mathrm{diam}(K^\pm)<\epsilon/2$. Then set
\[
K=K^+\cup K^-\;\;\mbox{and}\;\;U=V_{\alpha_\epsilon}\cap B_t.
\]
Define the \emph{admissible class} $\mathcal{A}$ as
\[
\mathcal{A}=\{v\in v_{\eta,\gamma}+C^\infty_c(\Omega;\R^m)\,|\,\mbox{$\nabla v\in U$ in $\Omega$, $\|v-v_{\eta,\gamma}\|_{L^\infty(\Omega)}<\epsilon/2$}\};
\]
then  $v_{\eta,\gamma}\in\mathcal{A}\neq\emptyset$.
For each $\delta>0$, define the \emph{$\delta$-approximating class} $\mathcal{A}_\delta$ as
\[
\mathcal{A}_\delta=\bigg\{v\in \mathcal{A}\,\Big|\,\int_{\Omega}\mathrm{dist}(\nabla v(x),K)\,dx<\delta|\Omega|\bigg\}.
\]

We now divide the proof into several steps as follows.

\underline{\textbf{Claim:}} For each $\delta>0$,
\[
\mbox{$\mathcal{A}_\delta$ is dense in $\mathcal{A}$ with respect to the $L^\infty(\Omega;\R^m)$-norm.}
\]

Suppose for the moment that Claim holds. We now generate solutions to problem (\ref{eikonal-problem}) under the Baire category framework.

\underline{\textbf{Baire's category method:}}
Let $\mathcal{X}$ denote the closure of $\mathcal A$ in the space $L^\infty(\Omega;\R^m)$. Then $(\mathcal{X},\|\cdot\|_{L^\infty(\Omega)})$ is a nonempty complete metric space.  As $U$ is bounded in $\Sigma_t$, we easily see that
\[
\mathcal{X}\subset v_{\eta,\gamma}+W^{1,\infty}_0(\Omega;\R^m)
\]
and that $\|u-v_{\eta,\gamma}\|_{L^\infty(\Omega)}\le\epsilon/2<\epsilon$ for all $u\in\mathcal{X}$.
Since the gradient operator $\nabla:\mathcal{X}\to L^1(\Omega;\M^{m\times n})$ is a Baire-one map \cite[Proposition 10.17]{Da}, it follows from the Baire Category Theorem \cite[Theorem 10.15]{Da} that the set $\mathcal{C}_{\nabla}$ of points of continuity for the operator $\nabla$ is dense in $\mathcal{X}$.

\underline{\textbf{Solution set $\mathcal{C}_\nabla$:}} We now check that every map $u\in\mathcal{C}_\nabla$ is a solution to problem (\ref{eikonal-problem}). Let $u\in\mathcal{C}_\nabla.$ From the previous step, we have
\begin{equation}\label{app-1}
\|u-v_{\eta,\gamma}\|_{L^\infty(\Omega)}<\epsilon.
\end{equation}
By the definition of $\mathcal{X}$, we can choose a sequence $\{\tilde u_k\}_{k\in\N}$ in $\mathcal A$ such that $\|\tilde u_k-u\|_{L^\infty(\Omega)}\to 0$ as $k\to\infty$. By the result of Claim above, for each $k\in\N$, we can choose a map $u_k\in\mathcal{A}_{1/k}$ such that $\|u_k-\tilde{u}_k\|_{L^\infty(\Omega)}<1/k;$ thus $\|u_k-u\|_{L^\infty(\Omega)}\to 0$. Since $u\in\mathcal{C}_\nabla$, we now have $\nabla u_k\to\nabla u$ in $L^1(\Omega;\M^{m\times n})$, and so $\nabla u_k(x)\to\nabla u(x)$ in $\M^{m\times n}$  for  a.e. $x\in\Omega$ after passing to a subsequence if necessary. On the other hand, from $u_k\in\mathcal{A}_{1/k}$, we have
\[
\int_\Omega\mathrm{dist}(\nabla u_k(x),K)\, dx<\frac{1}{k}|\Omega|\to 0.
\]
Since $K$ is compact and $U$ is bounded, it follows from the Dominate Convergence Theorem  that the map $u\in v_{\eta,\gamma}+W_0^{1,\infty}(\Omega;\R^m)$ satisfies the differential inclusion
\[
\nabla u\in K\quad\mbox{a.e. in $\Omega$}.
\]
This together with (\ref{app-1}) implies that $u$ is a solution to (\ref{eikonal-problem}).

\underline{\textbf{Infinitely many solutions:}} To show that there are infinitely many solutions to problem (\ref{eikonal-problem}), it now suffices to check that $\mathcal{C}_\nabla$ has infinitely many elements. Suppose on the contrary that $\mathcal{C}_\nabla$ has only finitely many elements. Since $\mathcal{C}_\nabla$ is dense in $\mathcal X$, we thus have
\[
v_{\eta,\gamma}\in\mathcal{X}=\overline{\mathcal{C}_\nabla}=\mathcal{C}_\nabla.
\]
By the previous step, we arrive at the conclusion that $v_{\eta,\gamma}$ is a solution to (\ref{eikonal-problem}), a contradiction. Therefore, $\mathcal{C}_\nabla$ has infinitely many elements.

\underline{\textbf{Proof of Claim:}} To finish the proof, it only remains to verify Claim above. Let $\delta>0$, $\theta>0$ and $v\in\mathcal{A}$. We will  show that there exists a map $v_\theta\in\mathcal{A}_\delta$ such that $\|v_\theta-v\|_{L^\infty(\Omega)}<\theta$.

Since $v\in\mathcal{A}$, there exists a map $\varphi\in C^\infty_c(\Omega;\R^m)$ such that $v=v_{\eta,\gamma}+\varphi$ in $\Omega$,  $\|\varphi\|_{L^\infty(\Omega)}<\epsilon/2$, and $\eta+\nabla\varphi\in U$ in $\Omega$. Set
\begin{equation}\label{app-2-epsilon-prime-def}
\epsilon'=2^{-1}(2^{-1}\epsilon-\|\varphi\|_{L^\infty(\Omega)})>0.
\end{equation}
Choose finitely many open cubes $Q_1,\cdots, Q_N\subset\subset\Omega,$ parallel to the axes, such that
\begin{equation}\label{app-3-first-cube-choice}
|\Omega\setminus\cup_{i=1}^N Q_i|<\frac{\delta|\Omega|}{4}.
\end{equation}

Fix an index $1\le i\le N.$ Let us write
\begin{equation}\label{app-4-positive-distance}
d_i=\min_{\bar{Q}_i}\mathrm{dist}(\eta+\nabla\varphi,\partial|_{\Sigma_t}U)>0.
\end{equation}
Choose finitely many dyadic cubes $Q_{i,1},\cdots, Q_{i,N_i}\subset Q_i$ with $|Q_i\setminus\cup_{j=1}^{N_i}Q_{i,j}|=0$ such that
\begin{equation}\label{app-5-grad-pertubation}
|\nabla\varphi(x)-\nabla\varphi(x)|<\min\Big\{\frac{d_i}{2},\frac{\delta}{16}\Big\}
\end{equation}
for all $x,y\in\bar{Q}_{i,j}$ and all $1\le j\le N_i.$ For each $1\le j\le N_i$, let $x_{i,j}$ denote the center of the cube $Q_{i,j}$ and write $\xi_{i,j}=\nabla v(x_{i,j})=\eta+\nabla\varphi(x_{i,j})\in U$. Define
\begin{equation}\label{app-6-Lambda-index-set}
\Lambda_i=\{j\in\{1,\cdots,N_i\}\,|\,\mathrm{dist}(\xi_{i,j},K)>\delta/8\}.
\end{equation}
Fix an index $j\in\Lambda_i$. Then we can choose two numbers $s^+_{i,j}>0>s^-_{i,j}$ such that
\begin{equation}\label{app-7-artificial-distance-choice}
\xi_{i,j}+s^\pm_{i,j}a\otimes b\in U\quad\mbox{and}\quad \mathrm{dist}(\xi_{i,j}+s^\pm_{i,j}a\otimes b,K^\pm)=\frac{\delta}{8}.
\end{equation}
Now, thanks to Theorem \ref{thm:rank-1}, for any given $\tau>0$, we can choose a map $\psi_{i,j}\in C^\infty_c(Q_{i,j};\R^m)$ and two disjoint open sets $Q^\pm_{i,j}\subset\subset Q_{i,j}$ satisfying the following:
\begin{itemize}
\item[(a)] $\mathcal{L}(\nabla\psi_{i,j})=0$ in $Q_{i,j}$,\\
\item[(b)] $\dist\big(\nabla\psi_{i,j},[-\lambda_{i,j}(s^+_{i,j}-s^-_{i,j})a\otimes b,(1-\lambda_{i,j})(s^+_{i,j}-s^-_{i,j})a\otimes b]\big)<\tau$ in $Q_{i,j}$,
\item[(c)] $\nabla \psi_{i,j}(x)= \left\{\begin{array}{ll}
                                (1-\lambda_{i,j})(s^+_{i,j}-s^-_{i,j})a\otimes b & \mbox{$\forall x\in Q^+_{i,j}$}, \\
                                -\lambda_{i,j}(s^+_{i,j}-s^-_{i,j})a\otimes b & \mbox{$\forall x\in Q^-_{i,j}$},
                              \end{array}\right.$
\item[(d)] $\big||Q^+_{i,j}|-\lambda_{i,j}|Q_{i,j}|\big|<\tau$, $\big||Q^-_{i,j}|-(1-\lambda_{i,j})|Q_{i,j}|\big|<\tau$,
\item[(e)] $\|\psi_{i,j}\|_{L^\infty(Q_{i,j})}<\tau$,
\end{itemize}
where $\lambda_{i,j}:=\frac{-s^-_{i,j}}{s^+_{i,j}-s^-_{i,j}}\in(0,1).$ Here, we choose
\begin{equation}\label{app-8-tau-choice}
0<\tau<\min\Big\{\frac{d_i}{2},\frac{\delta}{16},\theta,\epsilon',\frac{\delta|Q_{i,j}|}{8}\Big\}.
\end{equation}

Define
\[
v_\theta=v_{\eta,\gamma}+\varphi+\sum_{1\le i\le N,\,j\in\Lambda_i}\psi_{i,j}\chi_{Q_{i,j}}\;\;\mbox{in $\Omega$}.
\]
We now check that $v_\theta$ is a desired map for the proof of Claim.
By definition, we have
\[
v_\theta\in v_{\eta,\gamma}+C^\infty_c(\Omega;\R^m).
\]
Note from the definition of $v_\theta$, (e),  (\ref{app-8-tau-choice}) and (\ref{app-2-epsilon-prime-def}) that
\[
\|v_\theta-v_{\eta,\gamma}\|_{L^\infty(\Omega)}\le\|\varphi\|_{L^\infty(\Omega)}+\epsilon' <\frac{\epsilon}{2}.
\]
Let $1\le i\le N$, $j\in\Lambda_i$, and $x\in Q_{i,j}.$
For all $s^-_{i,j}\le s\le s^+_{i,j}$, we have from (\ref{app-5-grad-pertubation}), (b) and (\ref{app-8-tau-choice}) that
\[
\begin{split}
|\eta+ & \nabla\varphi(x)+\nabla\psi_{i,j}(x)-(\xi_{i,j}+s a\otimes b)| \\
& \le |\nabla\varphi(x)-\nabla\varphi(x_{i,j})|+|\nabla\psi_{i,j}(x)-s a\otimes b| \\
& <\min\Big\{\frac{d_i}{2},\frac{\delta}{16}\Big\}+\mathrm{dist}(\nabla\psi_{i,j}(x),[s^+_{i,j}a\otimes b,s^-_{i,j}a\otimes b])<\min\Big\{d_i,\frac{\delta}{8}\Big\}.
\end{split}
\]
This implies that
\[
\mathrm{dist}(\nabla v_\theta(x),[\xi_{i,j}+s^+_{i,j} a\otimes b,\xi_{i,j}+s^-_{i,j} a\otimes b])<\min\Big\{d_i,\frac{\delta}{8}\Big\}.
\]
Also, from (a), we have $\mathcal{L}(\nabla v_\theta(x))=\mathcal{L}(\eta+\nabla\varphi(x))=t$; hence, $\nabla v_\theta(x)\in\Sigma_t$.
Thus, from these two observations together with (\ref{app-4-positive-distance}) and (\ref{app-7-artificial-distance-choice}), we have $\nabla v_\theta(x)\in U$. By the definition of $v_\theta$, we now have
\[
\nabla v_\theta\in U\quad\mbox{in $\Omega$};
\]
therefore,  $v_\theta\in\mathcal{A}.$ Also, from (e) and (\ref{app-8-tau-choice}), we get
\[
\|v_\theta-v\|_{L^\infty(\Omega)}<\tau<\theta.
\]
Next, observe
\[
\begin{split}
\int_\Omega & \mathrm{dist}(\nabla v_\theta(x),K)\,dx= \int_{\Omega\setminus\cup_{i=1}^N Q_i}\mathrm{dist}(\nabla v(x),K)\,dx \\
& +\sum_{1\le i\le N,\,1\le j\le N_i,\,j\not\in\Lambda_i} \int_{Q_{i,j}}\mathrm{dist}(\nabla v(x),K)\,dx \\
& +\sum_{1\le i\le N,\,j\in\Lambda_i} \int_{Q_{i,j}\setminus(Q^+_{i,j}\cup Q^-_{i,j})}\mathrm{dist}(\eta+\nabla\varphi(x)+\nabla\psi_{i,j}(x),K)\,dx \\
& +\sum_{1\le i\le N,\,j\in\Lambda_i} \int_{Q^+_{i,j}\cup Q^-_{i,j}}\mathrm{dist}(\eta+\nabla\varphi(x)+\nabla\psi_{i,j}(x),K)\,dx \\
& =:I_1+I_2+I_3+I_4.
\end{split}
\]
Since $\nabla v_\theta\in U$ in $\Omega$, we have $\mathrm{dist}(\nabla v_\theta,K)\le 1$ in $\Omega$.
We now estimate:
\[
\begin{split}
I_1 & \le |\Omega\setminus\cup_{i=1}^N Q_i|<\frac{\delta|\Omega|}{4},\quad\mbox{(by (\ref{app-3-first-cube-choice}))} \\
I_2 & \le \sum_{1\le i\le N,\,1\le j\le N_i,\,j\not\in\Lambda_i} \frac{3\delta|Q_{i,j}|}{16}<\frac{\delta|\Omega|}{4},\quad\mbox{(by (\ref{app-5-grad-pertubation}) and (\ref{app-6-Lambda-index-set}))} \\
I_3 & \le \sum_{1\le i\le N,\,j\in\Lambda_i} |Q_{i,j}\setminus(Q^+_{i,j}\cup Q^-_{i,j})|<\frac{\delta|\Omega|}{4},\quad\mbox{(by (d) and (\ref{app-8-tau-choice}))} \\
I_4 & \le \sum_{1\le i\le N,\,j\in\Lambda_i} \frac{3\delta|Q_{i,j}|}{16}<\frac{\delta|\Omega|}{4}; \quad\mbox{(by (c), (\ref{app-5-grad-pertubation}) and (\ref{app-7-artificial-distance-choice}))}
\end{split}
\]
thus $I_1+I_2+I_3+I_4<\delta|\Omega|$. Therefore, we have $v_\theta\in\mathcal{A}_\delta$, and the proof of Claim is complete.

The theorem is now proved.
\end{proof}

\subsection{Proof of Corollary \ref{coro:T4-configuration}}

Let $A_1,A_2,A_3,A_4$ be the matrices defined in (\ref{app2-1-K-set-definition}). For $i=1,2,3,4,$ let $B_i$ be the open ball in the space $\Sigma_0=\{\xi\in\M^{2\times 2}\,|\,\mathcal{L}(\xi)=0\}$ with center $A_i$ and radius $\epsilon.$ Set $U=\cup_{i=1}^4 B_i$. Since $\nabla v_{\eta,\gamma}=\eta\in K^{rc}\subset U^{rc}$, we can apply Theorem \ref{thm:main-1} to obtain a map $u\in v_{\eta,\gamma}+ W_0^{1,\infty}(\Omega;\R^2)$ satisfying the conclusion of the corollary.

\section{Proof of Theorem \ref{thm:rank-1}}\label{sec:proof-rank-1}

We simply repeat the proof in \cite{KK1} without any modification.

Set $r=\rank(L).$ By (\ref{rank-1-1}), we have $1\le r\le m\wedge n=:\min\{m,n\}.$

\textbf{(Case 1):} Assume that the matrix $L$ satisfies
\[
\begin{split}
L_{ij}=0\;\;& \mbox{for all $1\le i\le m,\, 1\le j\le n$ but possibly the pairs}\\
& \mbox{$(1,1),(1,2),\cdots,(1,n),(2,2),\cdots,(r,r)$ of $(i,j)$};
\end{split}
\]
hence $L$ is of the form
\begin{equation}\label{rank-1-5}
L=\begin{pmatrix} L_{11} & L_{12} & \cdots & L_{1r} &  \cdots & L_{1n}\\
                    & L_{22} & & & & & \\
                    & & \ddots & & & & \\
                    & & & L_{rr} & & & \\
                    & & & & & & \end{pmatrix}\in\M^{m\times n}
\end{equation}
and that
\[
A-B=a\otimes e_1\;\;\mbox{for some nonzero vector $a=(a_1,\cdots,a_m)\in\R^m$},
\]
where each blank component in (\ref{rank-1-5}) is zero.
From (\ref{rank-1-1}) and $\rank(L)=r$, it follows that the product $L_{11}\cdots L_{rr}\ne 0$. Since $0=\mathcal L(A-B)=\mathcal L(a\otimes e_1)=L_{11}a_1$, we have $a_1=0$.

In this case, the linear map $\mathcal L:\M^{m\times n}\to\R$ is given by
\[
\mathcal L(\xi)=\sum_{j=1}^n L_{1j}\xi_{1j}+\sum_{i=2}^r L_{ii}\xi_{ii},\quad \xi\in\M^{m\times n}.
\]
We will find a linear differential operator $\Phi:C^1(\R^n;\R^m)\to C^0(\R^n;\R^m)$ such that
\begin{equation}\label{rank-1-6}
\mathcal L(\nabla\Phi v)\equiv 0 \quad\forall v\in C^2(\R^n;\R^m).
\end{equation}
So our candidate for such a $\Phi=(\Phi^1,\cdots,\Phi^m)$ is of the form
\begin{equation}\label{rank-1-7}
\Phi^i v=\sum_{1\le k\le m,\,1\le l\le n}a^i_{kl}v^k_{x_l},
\end{equation}
where $1\le i\le m$, $v\in C^1(\R^n;\R^m)$, and $a^i_{kl}$'s are real constants to be determined; then for $v\in C^2 (\R^n;\R^m)$, $1\le i\le m$, and $1\le j\le n$,
\[
\partial_{x_j}\Phi^i v =\sum_{1\le k\le m,\,1\le l\le n}a^i_{kl}v^k_{x_lx_j}.
\]
Rewriting (\ref{rank-1-6}) with this form of $\nabla\Phi v$ for $v\in C^2 (\R^n;\R^m)$, we have
\[
\begin{split}
0 & \equiv  \sum_{1\le k\le m,\,1\le j,l\le n} L_{1j}a^1_{kl}v^k_{x_lx_j} + \sum_{i=2}^r\sum_{1\le k\le m,\,1\le l\le n} L_{ii}a^i_{kl}v^k_{x_lx_i}\\
& = \sum_{k=1}^m \Big(L_{11}a^1_{k1}v^k_{x_1x_1}+\sum_{j=2}^r (L_{1j}a^1_{kj}+L_{jj}a^j_{kj})v^k_{x_jx_j}+\sum_{j=r+1}^n L_{1j}a^1_{kj}v^k_{x_jx_j} \\
& \quad +\sum_{l=2}^r (L_{11}a^1_{kl}+L_{1l}a^1_{k1}+L_{ll}a^l_{k1})v^k_{x_lx_1} +\sum_{l=r+1}^n (L_{11}a^1_{kl}+L_{1l}a^1_{k1})v^k_{x_lx_1} \\
& \quad +\sum_{2\le j<l\le r} (L_{1j}a^1_{kl}+L_{1l}a^1_{kj}+L_{jj}a^j_{kl}+L_{ll}a^l_{kj})v^k_{x_lx_j}\\
& \quad +\sum_{2\le j\le r,\,r+1\le l\le n} (L_{1j}a^1_{kl}+L_{1l}a^1_{kj}+L_{jj}a^j_{kl})v^k_{x_lx_j}\\
& \quad+\sum_{r+1\le j<l\le n} (L_{1j}a^1_{kl}+L_{1l}a^1_{kj})v^k_{x_lx_j} \Big).
\end{split}
\]

Should (\ref{rank-1-6}) hold, it  is thus sufficient to solve the following algebraic system for each $k=1,\cdots,m$ (after adjusting the letters for some indices):
\begin{eqnarray}
\label{rr-1}& L_{11}a^1_{k1}=0, &\\
\label{rr-4}& L_{1j}a^1_{kj}+L_{jj}a^j_{kj}=0 & \;\,\forall j=2,\cdots, r,\\
\label{rr-3}& L_{11}a^1_{kj}+L_{1j}a^1_{k1}+L_{jj}a^j_{k1}=0 & \;\,\forall j=2,\cdots, r, \\
\label{rr-5}& L_{1l}a^1_{kj}+L_{1j}a^1_{kl}+L_{ll}a^l_{kj}+L_{jj}a^j_{kl}=0 & \begin{array}{l}
                                                                    \forall j=3,\cdots, r, \\
                                                                    \forall l=2,\cdots, j-1,
                                                                  \end{array} \\
\label{rr-6}& L_{1j}a^1_{kj}=0 & \;\,\forall j=r+1,\cdots, n,\\
\label{rr-7}& L_{11}a^1_{kj}+L_{1j}a^1_{k1}=0 & \;\,\forall j=r+1,\cdots, n, \\
\label{rr-8}& L_{1l}a^1_{kj}+L_{1j}a^1_{kl}+L_{ll}a^l_{kj}=0 & \begin{array}{l}
                                                                    \forall j=r+1,\cdots, n, \\
                                                                    \forall l=2,\cdots, r,
                                                                  \end{array} \\
\label{rr-2}& L_{1l}a^1_{kj}+L_{1j}a^1_{kl}=0 & \begin{array}{l}
                                                                    \forall j=r+2,\cdots, n, \\
                                                                    \forall l=r+1,\cdots,j-1.
                                                                  \end{array}
\end{eqnarray}
Although these systems have infinitely many solutions, we will solve those in a way for a later purpose that the matrix $(a^j_{k1})_{2\le j, k\le m}\in\M^{(m-1)\times(m-1)}$ fulfills
\begin{equation}\label{rank-1-8}
a^j_{21}=a_j \quad\forall j=2,\cdots, m,\quad\mbox{and}\quad a^j_{k1}=0\quad\mbox{otherwise}.
\end{equation}
Firstly, we let the coefficients $a^i_{kl}\;(1\le i,k\le m,\,1\le l\le n)$ that do not appear in systems (\ref{rr-1})--(\ref{rr-2}) $(k=1,\cdots, m)$ be zero with an exception that we set $a^j_{21}=a_j$ for $j=r+1,\cdots,m$ to reflect (\ref{rank-1-8}). Secondly, for $1\le k\le m,\,k\ne 2$, let us take the trivial (i.e., zero) solution of system (\ref{rr-1})--(\ref{rr-2}). Finally, we take $k=2$ and solve system (\ref{rr-1})--(\ref{rr-2})  as follows with (\ref{rank-1-8}) satisfied.
Since $L_{11}\ne 0$, we set $a^1_{21}=0$; then (\ref{rr-1}) is satisfied. So we set
\[
a^j_{21}=-\frac{L_{11}}{L_{jj}}a^1_{2j},\;\;a^1_{2j}=-\frac{L_{jj}}{L_{11}}a_j \quad \forall j=2,\cdots,r;
\]
then (\ref{rr-3}) and (\ref{rank-1-8}) hold. Next, set
\[
a^j_{2j}=-\frac{L_{1j}}{L_{jj}}a^1_{2j}=\frac{L_{1j}}{L_{11}}a_j \quad\forall j=2,\cdots, r;
\]
then (\ref{rr-4}) is fulfilled. Set
\[
a^l_{2j}=-\frac{L_{1l}a^1_{2j}+L_{1j}a^1_{2l}}{L_{ll}}=\frac{L_{1l}L_{jj}a_j+L_{1j}L_{ll}a_l}{L_{ll}L_{11}},\;\;a^j_{2l}=0
\]
for $j=3,\cdots,r$ and $l=2,\cdots, j-1$; then (\ref{rr-5}) holds. Set
\[
a^1_{2j}=0\quad\forall j=r+1,\cdots,n;
\]
then (\ref{rr-6}) and (\ref{rr-7}) are satisfied. Lastly, set
\[
a^1_{2j}=0,\;\; a^l_{2j}=-\frac{L_{1j}}{L_{ll}}a^1_{2l}=\frac{L_{1j}}{L_{11}}a_l\quad\forall j=r+1,\cdots, n,\,\forall l=2,\cdots, r;
\]
then (\ref{rr-8}) and (\ref{rr-2}) hold. In summary, we have determined the coefficients $a^i_{kl}\;(1\le i,k\le m,\,1\le l\le n)$ in such a way that   system (\ref{rr-1})--(\ref{rr-2}) holds for each $k=1,\cdots, m$ and that (\ref{rank-1-8}) is also satisfied. Therefore, (1) follows from (\ref{rank-1-6}) and (\ref{rank-1-7}).

To prove (2), without loss of generality, we can assume $\Omega=(0,1)^n\subset\R^n.$ Let $\tau>0$ be given. Let $u=(u^1,\cdots,u^m)\in C^\infty(\Omega;\R^m)$ be a function to be determined. Suppose $u$ depends only on the first variable $x_1\in(0,1).$ We wish to have
\[
\nabla\Phi u(x)\in\{-\lambda a\otimes e_1,(1-\lambda) a\otimes e_1\}
\]
for all $x\in\Omega$ except in a set of small measure. Since $u(x)=u(x_1)$, it follows from (\ref{rank-1-7}) that for $1\le i\le m$ and $1\le j\le n$,
\[
\Phi^i u=\sum_{k=1}^m a^i_{k1} u^k_{x_1};\;\;\mbox{thus}\;\;\partial_{x_j}\Phi^i u=\sum_{k=1}^m a^i_{k1} u^k_{x_1 x_j}.
\]
As $a^1_{k1}=0$ for $k=1,\cdots, m$, we have $\partial_{x_j}\Phi^1 u =\sum_{k=1}^m a^1_{k1} u^k_{x_1 x_j}=0$ for $j=1,\cdots,n$. We first set $u^1\equiv 0$ in $\Omega$. Then from (\ref{rank-1-8}), it follows that for $i=2,\cdots, m$,
\[
\partial_{x_j}\Phi^i u =\sum_{k=2}^m a^i_{k1} u^k_{x_1 x_j}=a^i_{21}  u^2_{x_1 x_j}=a_i u^2_{x_1 x_j} = \left\{\begin{array}{ll}
                                  a_i u^2_{x_1 x_1} & \mbox{if $j=1$,} \\
                                  0 & \mbox{if $j=2,\cdots, n$.}
                                \end{array} \right.
\]
As $a_1=0$, we thus have that for $x\in\Omega$,
\[
\nabla\Phi u(x)=(u^2)''(x_1) a\otimes e_1.
\]
For irrelevant components of $u$, we simply take $u^3=\cdots =u^m\equiv 0$ in $\Omega$. Lastly, for a number $\delta>0$ to be chosen later, we choose a function $u^2(x_1)\in C^\infty_c(0,1)$ such that there exist two disjoint open sets $I_1,I_2\subset\subset (0,1)$ satisfying $\big||I_1|-\lambda\big|<\tau/2$, $\big||I_2|-(1-\lambda)\big|<\tau/2$, $\|u^2\|_{L^\infty(0,1)}<\delta$, $\|(u^2)'\|_{L^\infty(0,1)}<\delta$, $-\lambda\le (u^2)''(x_1)\le 1-\lambda$ for $x_1\in(0,1)$, and
\[
(u^2)''(x_1)= \left\{\begin{array}{ll}
                       1-\lambda & \mbox{if $x_1\in I_1$}, \\
                       -\lambda & \mbox{if $x_1\in I_2$}.
                     \end{array}
 \right.
\]
In particular,
\begin{equation}\label{rank-1-9}
\nabla \Phi u(x)\in[-\lambda a\otimes e_1,(1-\lambda)a\otimes e_1]\;\;\forall x\in\Omega.
\end{equation}
We now choose an open set $\Omega'_\tau\subset\subset\Omega':=(0,1)^{n-1}$ with $|\Omega'\setminus\Omega'_\tau|<\tau/2$ and a function $\eta\in C^\infty_c(\Omega')$ so that
\[
0\le\eta\le 1\;\;\mbox{in}\;\;\Omega',\;\; \eta\equiv 1\;\;\mbox{in}\;\Omega'_\tau,\;\;\mbox{and}\;\;|\nabla^i_{x'}\eta|<\frac{C}{\tau^i}\;\;(i=1,2)\;\;\mbox{in}\;\Omega',
\]
where $x'=(x_2,\cdots,x_n)\in\Omega'$ and the constant $C>0$ is independent of $\tau$.
Now, we define $g(x)=\eta(x') u(x_1)\in C^\infty_c(\Omega;\R^m)$. Set $\Omega_A=I_1\times\Omega'_\tau$ and $\Omega_B=I_2\times\Omega'_\tau.$ Clearly, (a) follows from (1). As $g(x)=u(x_1)=u(x)$ for $x\in \Omega_A\cup\Omega_B$, we have
\[
\nabla\Phi g(x)=\left\{\begin{array}{ll}
                         (1-\lambda)a\otimes e_1 & \mbox{if $x\in \Omega_A$}, \\
                       -\lambda a\otimes e_1 & \mbox{if $x\in \Omega_B$};
                       \end{array}
 \right.
\]
hence (c) holds. Also,
\[
\big||\Omega_A|-\lambda|\Omega|\big|=\big||\Omega_A|-\lambda\big|=\big||I_1||\Omega'_\tau|-\lambda\big|=\big||I_1|-|I_1||\Omega'\setminus\Omega'_\tau|-\lambda\big|<\tau,
\]
and likewise
\[
\big||\Omega_B|-(1-\lambda)|\Omega|\big|<\tau;
\]
so (d) is satisfied.
Note that for $i=1,\cdots,m,$
\[
\begin{split}
\Phi^i g & = \Phi^i(\eta u) = \sum_{1\le k\le m,\,1\le l\le n}a^i_{kl}(\eta u^k)_{x_l}=\eta\Phi^i  u+\sum_{1\le k\le m,\,1\le l\le n}a^i_{kl}\eta_{x_l} u^k\\
& = \eta\Phi^i  u+ u^2\sum_{l=1}^n a^i_{2l}\eta_{x_l} =\eta a^i_{21}u^2_{x_1} + u^2\sum_{l=1}^n a^i_{2l}\eta_{x_l}.
\end{split}
\]
So
\[
\|\Phi g\|_{L^\infty(\Omega)}\le C\max\{\delta,\delta\tau^{-1}\}<\tau
\]
if $\delta>0$ is chosen small enough; so (e) holds.
Next, for $i=1,\cdots,m$ and $j=1,\cdots,n,$
\[
\partial_{x_j}\Phi^i g=\eta_{x_j}a^i_{21}u^2_{x_1}+\eta\partial_{x_j}\Phi^i u + u^2_{x_j}\sum_{l=1}^n a^i_{2l}\eta_{x_l} + u^2\sum_{l=1}^n a^i_{2l}\eta_{x_l x_j};
\]
hence from (\ref{rank-1-9}),
\[
\dist(\nabla\Phi g,[-\lambda a\otimes e_1,(1-\lambda) a\otimes e_1])\le C\max\{\delta \tau^{-1},\delta\tau^{-2}\}<\tau\;\;\mbox{in $\Omega$}
\]
if $\delta$ is sufficiently small. Thus (b) is fulfilled.

\textbf{(Case 2):} Assume that $L_{i1}=0$ for all $i=2,\cdots, m$, that is,
\begin{equation}\label{rank-1-3}
L=\begin{pmatrix} L_{11} & L_{12} & \cdots & L_{1n}\\
                  0  & L_{22} & \cdots & L_{2n}\\
                  \vdots  & \vdots & \ddots & \vdots\\
                  0  & L_{m2} & \cdots & L_{mn} \end{pmatrix}\in\M^{m\times n}
\end{equation}
and that
\[
A-B=a\otimes e_1\;\;\mbox{for some nonzero vector $a\in\R^m$};
\]
then by (\ref{rank-1-1}), we have $L_{11}\ne 0.$

Set
\[
\hat L=\begin{pmatrix} L_{22} & \cdots & L_{2n}\\
                  \vdots & \ddots & \vdots\\
                  L_{m2} & \cdots & L_{mn} \end{pmatrix}\in\M^{(m-1)\times (n-1)}.
\]
As $L_{11}\ne 0$ and $\rank(L)=r$, we must have $\rank(\hat L)=r-1.$ Using the singular value decomposition theorem, there exist two matrices $\hat U\in O(m-1)$ and  $\hat V\in O(n-1)$ such that
\begin{equation}\label{rank-1-4}
\hat U^T\hat L\hat V=\mathrm{diag}(\sigma_2,\cdots,\sigma_r,0,\cdots,0)\in\M^{(m-1)\times (n-1)},
\end{equation}
where $\sigma_2,\cdots,\sigma_r$ are the positive singular values of $\hat L.$ Define
\begin{equation}\label{rank-1-2}
U=\begin{pmatrix} 1 & 0\\
                  0 & \hat U\end{pmatrix}\in O(m),\;\;
V=\begin{pmatrix} 1 & 0\\
                  0 & \hat V\end{pmatrix}\in O(n).
\end{equation}
Let $L'=U^T LV$, $A'=U^T AV$, and $B'=U^T BV$. Let $\mathcal{L}':\M^{m\times n}\to \R$ be the linear map given by
\[
\mathcal{L}'(\xi')=\sum_{1\le i\le m,\,1\le j\le n}L'_{ij}\xi'_{ij}\quad \forall \xi'\in\M^{m\times n}.
\]
Then, from (\ref{rank-1-3}), (\ref{rank-1-4}) and (\ref{rank-1-2}), it is straightforward to check the following:
\[
\left\{
\begin{array}{l}
  \mbox{$A'-B'=a'\otimes e_1$ for some nonzero vector $a'\in\R^m$,} \\
  \mbox{$L' e_1\neq 0$, $\mathcal L'(A)=\mathcal L'(B)$, and} \\
  \mbox{$L'$ is of the form (\ref{rank-1-5}) in Case 1 with $\mathrm{rank}(L')=r$.}
\end{array}\right.
\]
Thus we can apply the result of Case 1 to find a linear operator $\Phi':C^1(\R^n;\R^m)\to C^0(\R^n;\R^m)$ satisfying the following:

(1') For any open set $\Omega'\subset\R^n$,
\[
\Phi' v'\in C^{k-1}(\Omega';\R^m)\;\;\mbox{whenever}\;\; k\in\N\;\;\mbox{and}\;\;v'\in C^{k}(\Omega';\R^m)
\]
and
\[
\mathcal{L'}(\nabla\Phi' v')=0 \;\;\mbox{in}\;\;\Omega'\;\;\mbox{for all}\;\;v'\in C^2(\Omega';\R^m).
\]

(2') Let $\Omega'\subset\R^n$ be any bounded domain. For each $\tau>0$, there exist a function $g'=g'_\tau\in  C^{\infty}_{c}(\Omega';\R^m)$ and two disjoint open sets $\Omega'_{A'},\Omega'_{B'}\subset\subset\Omega'$ such that
\begin{itemize}
\item[(a')] $\Phi' g'\in C^\infty_c(\Omega';\R^m)$,
\item[(b')] $\dist(\nabla\Phi' g',[-\lambda(A'-B'),(1-\lambda)(A'-B')])<\tau$ in $\Omega'$,
\item[(c')] $\nabla \Phi' g'(x)= \left\{\begin{array}{ll}
                                (1-\lambda)(A'-B') & \mbox{$\forall x\in\Omega'_{A'}$}, \\
                                -\lambda(A'-B') & \mbox{$\forall x\in\Omega'_{B'}$},
                              \end{array}\right.$
\item[(d')] $\big||\Omega'_{A'}|-\lambda|\Omega'|\big|<\tau$, $\big||\Omega'_{B'}|-(1-\lambda)|\Omega'|\big|<\tau$,
\item[(e')] $\|\Phi' g'\|_{L^\infty(\Omega')}<\tau$.
\end{itemize}

For $v\in C^1(\R^n;\R^m)$, let $v'\in C^1(\R^n;\R^m)$ be defined by $v'(y)=U^T v(Vy)$ for $y\in\R^n$. We define $\Phi v(x)=U\Phi' v'(V^T x)$ for $x\in\R^n$, so that $\Phi v\in C^0(\R^n;\R^m).$ Then it is straightforward to check  that properties (1') and (2') of  $\Phi'$ imply respective properties (1) and (2) of  the linear operator $\Phi:C^1(\R^n;\R^m)\to C^0(\R^n;\R^m)$.

\textbf{(Case 3):} Finally, we consider the general case that $A$, $B$ and $L$ are as in the statement of the theorem. As $|b|=1$, there exists an $R\in O(n)$ such that $R^T b=e_1\in\R^n$. Also there exists a symmetric (Householder) matrix $P\in O(m)$ such that the matrix $L':=PLR$ has the first column parallel to $e_1\in\R^m$. Let
\[
A'=PAR\;\;\mbox{and}\;\; B'=PBR.
\]
Then $A'-B'=a'\otimes e_1$, where $a'=Pa\ne 0$. Note also that $L'e_1=PLRR^tb=PLb\ne 0$. Define $\mathcal L'(\xi')=\sum_{i,j}L'_{ij}\xi'_{ij}\;(\xi'\in\M^{m\times n})$; then $\mathcal L'(A')=\mathcal L(A)=\mathcal L(B)=\mathcal L'(B')$. Thus by the result of  Case 2, there exists a linear operator $\Phi':C^1(\R^n;\R^m)\to C^0(\R^n;\R^m)$ satisfying (1') and (2') above.

For $v\in C^1(\R^n;\R^m)$, let $v'\in C^1(\R^n;\R^m)$ be defined by $v'(y)=Pv(Ry)$ for $y\in\R^n$, and define $\Phi v(x)=P\Phi'v'(R^Tx)\in C^0(\R^n;\R^m)$. Then it is easy to check that the linear operator $\Phi:C^1(\R^n;\R^m)\to C^0(\R^n;\R^m)$ satisfies (1) and (2) by (1') and (2') similarly as  in  Case 2.


\begin{thebibliography}{11}




\bibitem{AH}
R. Aumann and S. Hart, {\em Bi-convexity and bi-martingales}, Israel J. Math., {\bf 54} (2) (1986), 159--180.

\bibitem{BJ}
J.M. Ball and R.D. James,  {\em Fine phase mixtures as minimizers of energy},  {Arch. Rational Mech. Anal.}, {\bf 100} (1) (1987), 13--52.

\bibitem{BJ1}
J.M. Ball and R.D. James,  {\em Proposed experimental tests of a theory of fine structure and the two-well problem},  {Phil. Trans. Roy. Soc. London A}, {\bf 338} (1992), 389--450.








\bibitem{Ca}
E. Casadio-Tarabusi, {\em An algebraic characterization of quasi-convex functions},  Ricerche Mat., {\bf 42} (1) (1993),  11--24.


\bibitem{CK}
M. Chipot and D. Kinderlehrer,  {\em Equilibrium configurations of crystals}, Arch. Rational Mech. Anal., {\bf 103} (3) (1988), 237--277.

\bibitem{CK1}
M. Chleb\'ik and B. Kirchheim, {\em Rigidity for the four gradient problem}, J. Reine Angew. Math., {\bf 551} (2002), 1--9.


\bibitem{CDK}
S. Conti, G. Dolzmann and B. Kirchheim, {\em Existence of Lipschitz minimizers for the three-well problem in solid-solid phase transitions},  Ann. Inst. H. Poincar\'e Anal. Non Lin\'eaire, {\bf 24} (6) (2007), 953--962.

\bibitem{CFG}
D. Cordoba, D. Faraco and  F. Gancedo, {\em Lack of uniqueness for weak solutions of the incompressible porous media equation},  Arch. Ration. Mech. Anal., {\bf 200} (3) (2011), 725--746.

\bibitem{Da}
B. Dacorogna, ``Direct methods in the calculus of variations," Second edition. Applied Mathematical Sciences, 78. Springer, New York, 2008.

\bibitem{DM1}
B. Dacorogna and P. Marcellini, ``Implicit partial differential equations," Progress in Nonlinear Differential Equations and their Applications, 37. Birkh\"auser Boston, Inc., Boston, MA, 1999.

\bibitem{DT}
B. Dacorogna and C. Tanteri, {\em  Implicit partial differential equations and the constraints of nonlinear elasticity,} J. Math. Pures Appl. (9), {\bf 81} (4) (2002), 311--341.





\bibitem{DS}
C. De Lellis and L. Sz\'ekelyhidi Jr., {\em  The Euler equations as a differential inclusion,} Ann. of Math. (2), {\bf 170} (3) (2009), 1417--1436.











\bibitem{Gr}
M. Gromov, {\em Convex integration of differential relations. I},  Izv. Akad. Nauk SSSR Ser. Mat., {\bf 37} (1973), 329--343.











\bibitem{KK1}
S. Kim and Y. Koh, {\em Weak solutions for one-dimensional non-convex elastodynamics}, Preprint.


\bibitem{KY1}
S. Kim and B. Yan, {\em Convex integration and infinitely many weak solutions to the Perona-Malik equation in all dimensions}, SIAM J. Math. Anal., {\bf 47} (4) (2015), 2770-2794.

\bibitem{KY2}
S. Kim and B. Yan, {\em On Lipschitz solutions for some  forward-backward parabolic  equations}, Preprint.

\bibitem{KY3}
S. Kim and B. Yan, {\em On Lipschitz solutions for some  forward-backward parabolic  equations. II: The case against Fourier}, Preprint.



\bibitem{Ku} 	
N.H. Kuiper, {\em On $C^1$-isometric embeddings. I},  Nederl. Akad. Wetensch. Proc. Ser. A., {\bf  58} (1955), 545--556.





\bibitem{MRS}
S. M\"uller, M.O. Rieger and V. \v Sver\'ak, {\em Parabolic systems with nowhere smooth solutions}, Arch. Rational Mech. Anal., {\bf 177} (1) (2005),  1--20.

\bibitem{MSv1}
S. M\"uller and V. \v Sver\'ak, {\em Convex integration with constraints and applications to phase transitions and partial differential equations}, J. Eur. Math. Soc. (JEMS), {\bf 1} (4) (1999), 393--422.

\bibitem{MSv2}
S. M\"uller and V. \v Sver\'ak, {\em Convex integration for Lipschitz mappings and counterexamples to regularity}, Ann. of Math. (2), {\bf 157} (3) (2003), 715--742.



\bibitem{Na1}
J. Nash, {\em $C^1$ isometric imbeddings}, Ann. of Math. (2), {\bf 60} (1954), 383--396.

\bibitem{Pe1}
P. Pedregal, {\em Laminates and microstructure}, Europ. J. Appl. Math., {\bf 4} (2) (1993),  121--149.





\bibitem{Po}
L. Poggiolini, {\em Implicit pdes with a linear constraint}, Ricerche Mat.,  {\bf 52} (2)  (2003),   217--230.

\bibitem{Po1}
W. Pompe, {\em Explicit construction of piecewise affine mappings with constraints,} Bull. Pol. Acad. Sci. Math., {\bf 58} (3) (2010),  209--220.



\bibitem{Sc}
V. Scheffer, ``Regularity and irregularity of solutions to nonlinear second-order elliptic systems of partial differential equations and inequalities," Dissertation, Princeton University, 1974.




\bibitem{Sy}
R. Shvydkoy, {\em Convex integration for a class of active scalar equations}, J. Amer. Math. Soc., {\bf 24} (4) (2011), 1159--1174.




\bibitem{Ta1}
L. Tartar, ``Some remarks on separately convex functions, in: Microstructure and
Phase Transitions," IMA Vol. Math. Appl. 54 (D. Kinderlehrer, R. D. James,
M. Luskin and J. L. Ericksen, eds.), Springer-Verlag, New York, 1993, pp. 191--204.










\end{thebibliography}
\end{document}